\DeclareMathAlphabet{\mathscr}{OT1}{pzc}{m}{it} 
\newcommand{\projection}{\mathrm{pr}}
\newcommand{\tth}{\tilde{\Theta}}
\newcommand{\tdelta}{\tilde{\delta}}
\newcommand{\tlambda}{\tilde{\lambda}}
\newcommand{\uth}{\underline{\Theta}}
\newcommand{\uuo}{\underline{\uo}}
\newcommand{\ttau}{{\bar{\tau}}}
\newcommand{\tauco}{{\tau_\co}}
\newcommand{\shimuradatum}{\mathfrak{D}}
\newcommand{\Sh}{\mathcal{X}}
\newcommand{\Shp}{X}
\newcommand{\shmuord}{{\mathcal S}}
\newcommand{\shpmuord}{S}
\newcommand{\shp}{\Shp} 
\newcommand{\T}{\mathcal{T}}
\newcommand{\CT}{\T}
\newcommand{\diffop}{\mathcal{D}}
\newcommand{\CD}{\mathcal{D}}
\newcommand{\ci}{C^{\infty}}
\newcommand{\shimuraop}{D}
\newcommand{\hdrone}{H^1_{\mathrm{dR}}}
\newcommand{\hdrtau}{H_{\mathrm{dR}, \tau}}
\newcommand{\hdrtauu}{H_{\mathrm{dR}, \tau^\ast}}
\newcommand{\tauu}{{\tau^\ast}}
\newcommand{\uo}{\omega}
\newcommand{\CE}{\mathcal{E}}
\newcommand{\gr}{\mathrm{gr}}
\newcommand{\rk}{\mathrm{rk}}
\newcommand{\cf}{\mathfrak{f}}
\newcommand{\cp}{\alpha}
\newcommand{\CA}{\mathcal{A}}
\newcommand{\Auniv}{\CA}
\newcommand{\dual}{t}
\newcommand{\Frob}{\mathrm{Frob}}
\newcommand{\be}{\mathbf{e}}
\newcommand{\BT}{Barsotti--Tate }
\newcommand{\HA}{{E}}
\newcommand{\ha}{{\rm ha}}
\newcommand{\cmfield}{F}
\newcommand{\realfield}{\cmfield_0}
\newcommand{\reflexfield}{F(\shimuradatum)}
\newcommand{\randomfield}{L}
\newcommand{\F}{{\mathbb F}}
\newcommand{\Fbar}{\overline{\F}}
\newcommand{\IC}{\mathbb{C}}
\newcommand{\IR}{\mathbb{R}}
\newcommand{\IQ}{\mathbb{Q}}
\newcommand{\Qbar}{\overline{\IQ}}
\newcommand{\CO}{\mathcal{O}}
\newcommand{\isomto}{\overset{\sim}{\rightarrow}}
\newcommand{\Tr}{\mathrm{Tr}}
\newcommand{\Witt}{{\mathbb W}}
\newcommand{\compact}{K}
\newcommand{\Heckealgebra}{\mathcal{H}}
\newcommand{\Satake}{\mathcal{S}}
\newcommand{\Res}{\mathrm{Res}}
\newcommand{\Gal}{\mathrm{Gal}}
\newcommand{\lattice}{\mathscr{L}}
\newcommand{\KS}{\mathrm{KS}}
\newcommand{\ks}{\mathrm{ks}}
\newcommand{\id}{\mathrm{id}}
\newcommand{\Para}{P_\mu}
\newcommand{\Levi}{J}
\newcommand{\Levii}{J_\mu}
\newcommand{\Uni}{U_\mu}
\newcommand{\Borel}{B}
\newcommand{\Borell}{B_\mu}
\newcommand{\Nilp}{N}
\newcommand{\Nilpp}{N_\mu}
\newcommand{\Torus}{T}
\newcommand{\Toruss}{T_\mu}
\newcommand{\GL}{\mathrm{GL}}
\newcommand{\adeles}{\mathbb{A}}
\newcommand{\ZZ}{\mathbb{Z}}
\newcommand{\schur}{\mathbb{S}}
\newcommand{\lcm}{\mathrm{lcm}}
\newcommand{\Sym}{\mathrm{Sym}}
\newcommand{\Isom}{\mathrm{Isom}}
\newcommand{\Hom}{\mathrm{Hom}}
\newcommand{\End}{\mathrm{End}}
\newcommand{\Gm}{\mathbb{G}_m}
\newcommand{\G}{\mathcal{G}}
\newcommand{\diag}{\mathrm{diag}}
\newcommand{\Spec}{\mathrm{Spec}}
\newcommand{\orbit}{\mathfrak{o}}
\newcommand{\co}{\mathfrak{o}}
\newcommand{\GSp}{\mathrm{GSp}}
\newcommand{\CG}{{\mathcal{G}}}
\newcommand{\pe}{{(p^e)}}
\newcommand{\pee}{{(p^\be)}}
\renewcommand{\MR}[1]{ }
\renewcommand{\mod}{\mathrm{mod}\;}
\renewcommand{\bmod}{\mod}
\theoremstyle{plain}
\newenvironment{customthm}[1]
  {\innercustomthm}
  {\endinnercustomthm}
\newtheorem{thm}{Theorem}
\newtheorem{conj}[thm]{Conjecture}
\numberwithin{thm}{subsection}
\newtheorem{cor}[thm]{Corollary}
\newtheorem{coro}[thm]{Corollary}
\newtheorem{lem}[thm]{Lemma}
\newtheorem{prop}[thm]{Proposition}
\theoremstyle{definition}
\newtheorem{defi}[thm]{Definition}
\theoremstyle{remark}
\newtheorem{remark}[thm]{Remark}
\newtheorem{rmk}[thm]{Remark}
\title[Entire theta operators at unramified primes]{Entire theta operators at unramified primes}
\date{\today}
\author[E.\  Eischen]{E.\ Eischen$^*$}\thanks{$^*$Partially supported by NSF Grants DMS-1559609 and DMS-1751281.}
\author[E.\ Mantovan]{E.\ Mantovan}
\address{E. E. Eischen\\
Department of Mathematics\\
University of Oregon\\
Fenton Hall\\
Eugene, OR 97403-1222\\
USA}
\email{eeischen@uoregon.edu}
\address{E. Mantovan\\
Department of Mathematics\\
Caltech\\
1200 E California Blvd\\
Pasadena, CA 91125\\
USA
}
\email{mantovan@caltech.edu}
\begin{document}

\bibliographystyle{amsalpha}  
\begin{abstract}
Starting with work of Serre, Katz, and Swinnerton-Dyer, theta operators have played a key role in the study of $p$-adic and $\bmod p$ modular forms and Galois representations.  
This paper achieves two main results for theta operators on automorphic forms on PEL-type Shimura varieties: 1) the analytic continuation at unramified primes $p$ to the whole Shimura variety of the $\bmod p$ reduction of $p$-adic Maass--Shimura operators {\it a priori} defined only over the $\mu$-ordinary locus, and 2) the construction of new $\bmod p$ theta operators that do not arise as the $\bmod p$ reduction of Maass--Shimura operators.  
While the main accomplishments of this paper concern the geometry of Shimura varieties and consequences for differential operators, we conclude with applications to Galois representations.  Our approach involves a careful analysis of the behavior of Shimura varieties and enables us to obtain more general results than allowed by prior techniques, including for arbitrary signature, vector weights, and unramified primes in CM fields of arbitrary degree.
\end{abstract}

\maketitle
\setcounter{tocdepth}{1}
\tableofcontents
\vspace{-0.25in}

\section{Introduction}

Starting with work of Serre, Swinnerton-Dyer, and Katz, theta operators have played key roles in the study of $p$-adic and $\bmod p$ modular forms and associated arithmetic data at a prime number $p$.  For example, the operator $\theta$ from \cite{serre, swinnerton-dyer} that acts on the $q$-expansion $f(q)$ of a modular form $f$ by $q\frac{d}{dq}$ is employed in constructions of $p$-adic $L$-functions in characteristic $0$, as well as in the proof of the weight part of Serre's conjecture in characteristic $p$.

Recently, the potential for theta operators to be similarly powerful in higher rank applications has led to their study in the setting of automorphic forms on Shimura varieties $\Sh$ of PEL type.  In characteristic $0$, $p$-adic theta operators arise as $p$-adic Maass--Shimura operators, i.e. as differential operators constructed from the Gauss--Manin connection and Kodaira--Spencer morphism analogously to the $\ci$ Maass--Shimura operators from, e.g., \cite{kaCM, hasv, ha86, shar, padiffops2, EDiffOps, zheng}.  They are defined on automorphic forms over the $\mu$-ordinary locus of $\Sh$, and there is a mathematical obstruction to analytically continuing them to the whole Shimura variety (as explained in \cite[Section 1.3]{EFGMM}).

On the other hand, as this paper illustrates, the $\mod p$
setting is fundamentally different, in the sense that theta operators are entire, i.e. can be analytically continued to the whole $\mod p$ Shimura variety, and furthermore,
there are more theta operators than just those arising as $\mod p$ reductions of Maass--Shimura operators.   In particular, 
by building on ideas introduced by Katz,
we obtain the following results when $p$ is unramified in the reflex field of $\Sh$.

\begin{customthm}{I}[Rough version of Theorems \ref{ANAlambda_thm} and \ref{ANA_thm}: Analytic continuation]\label{ThmI}
Reductions $\mod p$ of $p$-adic Maass--Shimura differential operators $\diffop^\lambda$, {\it } a priori defined only over the $\mu$-ordinary locus (where they raise the weight of an automorphic form by a weight $\lambda$), can be analytically continued to the entire $\mod p$ Shimura variety $\Shp$.  

More precisely, for good weights (as in Definition \ref{goodweight}), there is a differential operator $\Theta^\lambda$ defined on automorphic forms on $\Shp$ whose restriction to the $\mu$-ordinary locus coincides with $\HA^{|\lambda|/2}\cdot \diffop^\lambda$, where $\HA$ denotes the $\mu$-ordinary Hasse invariant.
\end{customthm}

By Proposition \ref{prop-sym}, the amount $\lambda$ by which Maass--Shimura operators can raise the weight of an automorphic form is always symmetric (in the sense of Section \ref{weights_sec}).  For applications to the weight part of Serre's conjecture, though, one would also like more control over the weights.  So the theta operators described in Theorem \ref{ThmII} below are a boon, since they also allow the weights to vary by certain non-symmetric amounts.  {\bf This new phenomenon only occurs when the ordinary locus is empty (i.e. when the prime $p$ is not totally split in the reflex field of $\Sh$).  This is the complement of the set of cases handled by \cite{EFGMM} and is specific to the $\mu$-ordinary setting for unitary Shimura varieties.}

\begin{customthm}{II}[Rough version of Theorem \ref{newops-thm}: New theta operators]\label{ThmII}
Assume $p$ does not split completely in the reflex field.  Then the class of $\mod p$ theta operators is larger than the class of $\mod p$ reductions of Maass--Shimura operators.

In particular, there are entire theta operators that raise the weights of $\mod p$ automorphic forms by different amounts from those allowed by the $\mod p$ reduction of $p$-adic theta operators.  
More precisely, if $\Upsilon$ is as in Equation \eqref{upsilon_def} and $\lambda$ is symmetric and simple (as in Definition \ref{simple_defi}), there is an entire differential operator 
$\tth^\lambda$ that
raises the weight of $\mod p$ automorphic forms on $\Shp$ 
of good weight by the non-symmetric weight
$(|\lambda|/2)\kappa_{\ha}+\tlambda^\Upsilon$, 
where $\kappa_{\ha}$ is the weight of the $\mu$-ordinary Hasse invariant and $\tlambda^\Upsilon$ is as in Definition \ref{twistweight_defi}.
\end{customthm}

Most of the work in this paper concerns the development of techniques to prove Theorems \ref{ThmI} and \ref{ThmII}.  Keeping in mind a key source of motivation for studying $\mod p$ theta operators in the first place, though, we conclude the paper by also addressing some effects of these operators on Galois representations.

\begin{rmk}\label{vary-rmk}
In the precise versions of these theorems later in this paper, we have finer control over the weight than these rough versions might suggest.  In particular, we can vary the weights at places corresponding to different primes dividing $p$, but for clarity of notation in this introduction, we have suppressed the corresponding subscripts and partial Hasse invariants.  Such control is important for anticipated applications to theta cycles in studying the weight part of Serre's conjectures and, as discussed in Section \ref{innovations-section} below, cannot be achieved via prior approaches.
\end{rmk} 

\subsection{Principal innovations and relationships with prior developments}\label{innovations-section}
Thanks to the approach developed in the present paper, which relies on the development of a theory of automorphic forms over the $\mu$-ordinary locus by the authors in \cite{EiMa} and the construction of $\mu$-ordinary Hasse invariants by Goldring and Nicole in \cite{GN}, Theorems \ref{ThmI} and \ref{ThmII} improve on the previous results for theta operators in the setting of unitary groups of higher rank.
(See Remark \ref{earlier-results} below, for references to earlier work in the Hilbert--Siegel case.) 
A careful analysis of intrinsic properties of the underlying Shimura varieties enables us to remove restrictions from prior results. 

Building on Katz's study of the theta operator for modular forms in \cite{Katz-theta}, Theorem \ref{ThmI} was previously proved jointly by the authors together with Flander, Ghitza, and McAndrew in \cite[Theorem A]{EFGMM} under the auxiliary assumption that $p$ splits completely in the reflex field of $\Sh$ (i.e. when the ordinary locus is nonempty).  When the ordinary locus is empty, the constructions in \cite{EFGMM} still hold but the resulting $\Theta$-operators vanish on the whole $\mod p$ Shimura variety $\Shp$.  Theorem \ref{ThmII} was previously proved by de Shalit and Goren in  \cite{DSG, DSG2} in the special case of scalar-valued automorphic forms and under the assumption that
the real field associated to the Shimura datum defining $\Sh$ is $\IQ$.    (Note that the operator they denote by $\Theta$ is the operator we denote by $\underline{\Theta}$.)  The approach in \cite{DSG, DSG2}, which is completely different from that in this paper (as, for example, their proof relies on Fourier--Jacobi expansions), does not readily extend to the case of non-scalar weights.  That is, extending their techniques to non-scalar weights is not merely a notational or combinatorial issue.

In the present paper, like in \cite{EFGMM}, our approach to studying theta operators is coordinate-free and avoids $q$-expansions, Fourier--Jacobi expansions, Taylor series, Serre--Tate expansions, etc, and instead relies on intrinsic geometric features of the underlying Shimura variety.  (Even though other references also construct theta operators without referencing such expansions, their proofs of results about them sometimes rely in key ways on such expansions, for example to obtain stronger results under particular conditions, like discussed below.)  This allows us to continue to work with vector weights like in \cite{EFGMM}.

A key ingredient for extending the approach of \cite{EFGMM} to the case where $p$ is merely unramified in the reflex field (i.e. the ordinary locus need not be nonempty) is the $\mu$-ordinary Hasse invariants introduced by Goldring and Nicole in \cite{GN}.  Working with partial Hasse invariants, in place of the Hasse invariant from the case of nonempty ordinary locus, enables us to extend the Hodge--de Rham splitting in characteristic $p$ to the whole Shimura variety in a way that enables us to naturally extend the mod $p$ reduction of Maass--Shimura operators to the whole Shimura variety.  This method has the advantage that it allows us to vary weights by different amounts at places corresponding to different primes dividing $p$, as mentioned in Remark \ref{vary-rmk}, but also the disadvantage that forces us to restrict to good weights in Theorems \ref{ThmI} and \ref{ThmII}.

As a crucial intermediate step introduced in the present paper, we also consider 
differential operators on the {\it OMOL} (``Over the Mu-Ordinary Locus'') sheaves introduced in \cite[Section 4.2]{EiMa}, when the ordinary locus is empty (i.e. when $p$ does not split completely in the reflex field).  In this paper, we establish the analytic continuation of the $\mod p$ reduction of OMOL sheaves and differential operators to the whole $\mod p$ Shimura variety (Theorem \ref{OMOLdiffextend}). 
As explained in \cite{EiMa},  over the $p$-adic $\mu$-ordinary locus, there is a canonical projection from an automorphic sheaf to an OMOL sheaf of the same weight, which is generally not an isomorphism.   In the present paper, we observe that, over the whole $\mod p$ Shimura variety, we also have an injection from an OMOL sheaf of simple weight (as in Definition \ref{simple_defi}) to an automorphic sheaf, of good higher weight.  This enables us to exploit  the analytic continuation
of  OMOL sheaves and differential operators to construct new entire theta operators,  when $p$ does not split completely in the reflex field.

Theorem \ref{ThmII} is then achieved by exploiting the results we develop for OMOL sheaves and their relations to automorphic sheaves in the $\mod p$ setting. 
As noted in Remark \ref{DSG_compare}, in the special case when the automorphic forms have scalar weights and the base field is $\IQ$, Theorem \ref{ThmII} is  also proved  in \cite{DSG} and \cite[Sections 4 and 5]{DSG2}, although 
the operator $\Theta$ in  {\it loc. cit.} can only be iterated  (to define operators $\Theta^\lambda$, for higher weights $\lambda$) when the signature is $(n,1)$ (as opposed to general signature).   The scalar weights occurring in {\it loc. cit.} are special cases of what we call {\it simple, scalar} in the present paper.  

The techniques we use in our proofs (e.g. exploiting the underlying geometry and OMOL sheaves, and avoiding $q$-expansions) are genuinely different from those in \cite{DSG,DSG2} and immediately remove their conditions on the signature.  (To be clear, the construction of theta operators in either case does not require $q$-expansions, but rather, the difference is in the techniques employed in proofs.)  On the other hand, in \cite{DSG, DSG2}, de Shalit and Goren obtain an operator $\Theta$ which raises the weight by a lower amount,
via a lower exponent on the Hasse invariant than our methods produce in the cases they consider.  Their better bound on the weight is useful, for example, in their application to the study of theta cycles at signature $(n,1)$ in \cite[Section 5]{DSG}.

As a consequence of our work, the results on Hecke algebras and Galois representations from \cite[Sections 4 and 5]{EFGMM} are extended in Section \ref{GalOne-section} to the case where $p$ need not split (but rather is merely unramified) and where the set of weights under consideration is expanded.

\begin{rmk}\label{earlier-results}
While the discussion in the present paper focuses primarily on 
overcoming challenges and exploring new phenomena particular to the $\mu$-ordinary setting (that is, the case when the ordinary locus is empty, which is specific to the unitary case),
we note for the sake of completion that in the special case of the symplectic group $\GSp_4\left(\IQ\right)$, Yamauchi has produced precise results for theta cycles, which rely on combinatorics specific to that case \cite{Yama}.  Results for theta operators in the Hilbert--Siegel case of arbitrary rank are also obtained in \cite{EFGMM}.  Earlier, Andreatta and Goren also produced stronger results on theta operators and theta cycles in the setting of Hilbert modular forms, i.e. for $\GSp_2 = \GL_2$ over a totally real field \cite[Section 16]{AndreattaGoren}.  Those results build on Katz's weight filtration theorem (which Jochnowitz and Edixhoven had also used earlier to obtain results about theta cycles in the setting of modular forms \cite{jochnowitz, Edixhoven}).
\end{rmk}

\subsection{Structure of the paper}
Sections \ref{background-section} and \ref{DiffOpsReview-section} discuss properties of the main objects with which we work.  In particular, Section \ref{background-section} introduces background information for Shimura varieties, automorphic forms, and partial Hasse invariants over the $\mu$-ordinary locus.  After recalling constructions of $p$-adic Maass--Shimura operators over the $\mu$-ordinary locus \cite{EDiffOps, EFMV, EiMa}, Section \ref{DiffOpsReview-section} establishes key properties of these operators.  Section \ref{DiffOpsReview-section} concludes with results for differential operators on {\it OMOL} automorphic forms, which were first introduced in \cite[Section 4.2]{EiMa} and play a crucial intermediate role in achieving the results of the present paper.

Our approach to constructing entire $\mod p$ theta operators relies heavily on Section \ref{HD-section}, which concerns the interplay between the characteristic $p$ Hodge--de Rham filtration and partial Hasse invariants (from \cite{GN}).  In particular, Theorem \ref{PEP_thm} is a key ingredient for extending the analytic continuation results from \cite[Section 3]{EFGMM} (i.e. when the prime $p$ splits completely, so the ordinary locus is nonempty) to the setting of Theorem \ref{ThmI} ($p$ merely unramified, so the ordinary locus need not be nonempty).  Employing Theorem \ref{PEP_thm}, Section \ref{AC-section} details how to extend the mod $p$ reduction of $p$-adic Maass--Shimura varieties, initially defined only over the $\mu$-ordinary locus, to the entire Shimura variety.   

While Section \ref{AC-section} concerns the mod $p$ reduction of $p$-adic differential operators, Section \ref{modpdiff_sec} produces the new classes of mod $p$ differential operators arising in Theorem \ref{ThmII}.  These new operators raise the weights of automorphic forms by amounts different from the amounts possible with the mod $p$ reductions of Maass--Shimura operators.  As an intermediate step, Section \ref{modpdiff_sec} also explains how to analytically continue the mod $p$ reduction of  differential operators on {\it OMOL} $p$-adic automorphic forms.  We anticipate that the additional control over the weights will be useful for studying theta cycles and Serre's weight conjecture.  Motivated by this anticipated application, we apply our mod $p$ differential operators to Galois representations in Section \ref{GalOne-section}, via an analysis of their interaction with Hecke operators.

\subsection{Acknowledgements}

Our work on this project has benefitted from helpful conversations with Ehud de Shalit, Alex Ghitza, Eyal Goren, and Angus McAndrew, especially about their earlier papers on related topics.  We completed key steps during our visit to the University of Lille, as well as the second named author's visits to the University of Oregon and the University of Padua, and we are grateful to these institutions for their hospitality.  We are also grateful to the referee for helpful suggestions.

\section{Background and setup}\label{background-section}

In the section, we introduce notation, key assumptions, and basic information about automorphic forms on PEL type Shimura varieties (Sections \ref{Sh_sec} and \ref{auto_sec}), the Hasse invariant and $p$-adic automorphic forms over the $\mu$-ordinary locus (Sections \ref{hasselocus_sec} and \ref{muord_sec}), and the associated Hecke algebras and Galois representations (Section \ref{galois_sec}).

\subsection{Shimura data and Shimura varieties}\label{Sh_sec}
We briefly introduce PEL-type Shimura varieties of unitary (A) or symplectic (C) type.  For a more extensive introduction to Shimura varieties, see \cite{kottwitz, lan-examples, la}.  Given that many of the ingredients for our work exist for Shimura varieties of Hodge type, we expect it is possible to extend our results from unitary and symplectic groups to that more general setting.

To the extent reasonable, we employ the conventions of \cite[Section 2.2]{EFGMM}.  {\bf Note, however, that the simplifying conditions of \cite[Section 2.2.2]{EFGMM} (i.e. that $p$ splits completely in the reflex field) are not imposed here, since one of the achievements of the present paper is their removal.}  
\subsubsection{Shimura data and associated data}
Our Shimura varieties are associated to a PEL-type Shimura datum, i.e. a tuple $\shimuradatum:=\left(D, \ast, V, \langle, \rangle, h\right)$\index{$\shimuradatum$} consisting of:
\begin{itemize}
\item{A finite-dimensional simple $\IQ$-algebra $D$}
\item{A positive involution $\ast$ on $D$ over $\IQ$}
\item{A nonzero finitely-generated left $D$-module $V$ together with a non-degenerate $\IQ$-valued alternating form $\langle, \rangle$ on $V$ such that $\langle bv, w\rangle = \langle v, b^\ast w\rangle$ for all $v, w\in V$ and $b\in D$}
\item{A $\ast$-homomorphism $h: \IC\rightarrow C_\IR$, where $C := \End_D(V)$ viewed as a $\IQ$-algebra and the symmetric real-valued bilinear form $\langle \cdot, h(i)\cdot\rangle$ on $V_\IR$ is positive-definite}
\end{itemize}
From the Shimura datum $\shimuradatum$, we also obtain:
\begin{itemize}
\item{A field \index{$\cmfield$}$\cmfield$, defined to be the center of $D$.}
 \item{A totally real field \index{$\realfield$}$\realfield$, defined to be the fixed field of $\ast$ on $\cmfield$.}
 \item{A decomposition $V_{\IC} = V_1\oplus V_2$ arising from the endomorphism $h_{\IC} = h\times_{\IR}\IC$ of $V_{\IC} = V_{\IR}\otimes_{\IR} \IC = V\otimes _{\IQ}\IC$ on which $(h(z), 1) = h(z)\times 1$ acts by $z$ on $V_1$ and by $\bar{z}$ on $V_2$ for each $z\in \IC$.}
 \item{An integer $n:=\dim_\cmfield V$.}
 \item{The reflex field\footnote{Standard notation for the reflex field, including in the authors' prior work, is $E$.  In this paper, though, we follow the convention of using $E$ for the Hasse invariant.} \index{$\reflexfield$}$\reflexfield$, defined to be the field of definition of the $G(\IC)$-conjugacy class of $V_1$.}
 \end{itemize}

 We have $\cmfield= \realfield$ if $\cmfield$ is totally real, and otherwise $\cmfield$ is a CM field obtained as an imaginary quadratic extension of the totally real field $\realfield$.  We fix an algebraic closure $\Qbar$ of $\IQ$.
 Given a number field $\randomfield$, we denote by $\T_\randomfield$\index{$\T_\randomfield$} the set of embeddings $\randomfield\hookrightarrow \Qbar$.  Since we will be working over both $\IC$ and $\IC_p$, we also fix embeddings
\begin{align*}
\iota_\infty&: \Qbar\hookrightarrow\IC\\
\iota_p&:\Qbar\hookrightarrow\IC_p,
\end{align*}
and we identify $\Qbar$ with its image under each of these embeddings.  So via composition with $\iota_\infty$, $\T_{\realfield}$ is the set of embeddings $\realfield\hookrightarrow\IR$.  If $\realfield\neq \cmfield$, the elements of $\T_{\cmfield}$ arise in complex conjugate pairs $\tau\neq\tau^\ast$\index{$\tau^\ast$} with $\tau|_{\realfield} = \tau^\ast|_{\realfield}$.  We denote by $\Sigma_{\cmfield}$ a choice of CM type, i.e. a set consisting of exactly one of $\tau$, $\tau^\ast$ for each complex conjugate pair $(\tau, \tau^\ast)$ of complex embeddings of $\cmfield$.  We identify $\Sigma_\cmfield$\index{$\Sigma_\cmfield$} with $\T_{\realfield}$ via the bijective map $\tau\mapsto \tau|_{\realfield}$.  We also sometimes drop the subscript and write $\T$\index{$\T$} when the subscript is clear from context.

\subsubsection{Additional conditions}\label{additional-conditions}
We assume the Shimura datum $\shimuradatum$ satisfies the following additional conditions:
\begin{itemize}
\item{The prime $(p)$\index{$(p)$} is unramified in $\cmfield$.}
\item{The algebra $D$ is split at $p$, i.e. $D_{\IQ_p}$ is a product of matrix algebras over extensions of $\IQ_p$.}
\item{There is a $\ZZ_{(p)}$-order $\CO_D$ in $D$ preserved by $\ast$ and whose $p$-adic completion is a maximal order in $D_{\IQ_p}$.}
\item{There is a $\ZZ_p$-lattice $\lattice\subset V_{\IQ_p}$ self-dual with respect to $\langle, \rangle$ and preserved by $\CO_D$.}
\end{itemize}
We fix a prime $\mathfrak{p}$\index{$\mathfrak{p}$} in $\reflexfield$ above $(p)$, and write \index{$k(\mathfrak{p})$}$k(\mathfrak{p}):=\CO_{\reflexfield}/\mathfrak{p}$ for its residue field. Under the above assumption, $\reflexfield$ is unramifed at $\mathfrak{p}$.  We fix an algebraic closure ${\mathbb F}$\index{${\mathbb F}$} of $k(\mathfrak{p})$, define 
$\Witt:=W\left({\mathbb F}\right)$\index{$\Witt$} its ring of Witt vectors,
and write $\sigma$\index{$\sigma$} for the absolute Frobenius on $\Witt$.

In the following, for any number field $\randomfield$, we denote by $\randomfield^\Gal$ its Galois closure inside $\Qbar$. For any field $k$ of characteristic $p$, we denote by $W(k)$ its ring of Witt vectors.   Given a field $\randomfield$, we denote by $\CO_\randomfield$ its ring of integers, and given a prime $\mathfrak{q}$ in $\randomfield$, we write $\CO_{\randomfield, \mathfrak{q}}$ (resp. $\CO_{\randomfield_\mathfrak{q}}$) for the localization (resp. completion) at $\mathfrak{q}$.   If $k$ is the residue field of a complete field $\randomfield$ which is unramified, we identify $W(k)$ with the ring of integers $\CO_\randomfield$.   
Given a number field $\randomfield$, and a prime $\mathfrak{q}$ above $(p)$, if $\mathfrak{q}$ is unramified,  we identify $\T_\randomfield$ with $\Hom\left(\CO_\randomfield, \Witt\right)$, via $\iota_p$. Composition on the right defines an action of Frobenius $\sigma$ on $\T_\randomfield$; for any $\tau\in\T_\randomfield$ its $\sigma$-orbit is the subset\index{$\co_\tau$}
\begin{align*}
\co_\tau=\{\tau\circ\sigma^i|i\in\ZZ\}\subseteq \CT_\randomfield.
\end{align*} 
We define\index{$\mathfrak{O}_\randomfield$}
\begin{align*}
\mathfrak{O}_\randomfield:=\left\{\co\subset \T_L\mid \co \mbox{ is a $\sigma$-orbit}\right\}.
\end{align*}

 \subsubsection{Algebraic group associated to the Shimura datum $\shimuradatum$}\label{groups-intro}
We denote by $G$ the algebraic group defined over $\IQ$ whose $R$-points, for any $\IQ$-algebra $R$, are
 \begin{align*}
G(R)=\left\{x\in C\otimes_{\IQ}R| xx^\ast\in R^\times\right\}.
\end{align*}
We denote by
\begin{align*}
\nu: G\rightarrow \Gm
\end{align*}
the similitude factor of $G$.
For any character $\psi: G\rightarrow \Gm$, we denote by $\hat\psi$ its cocharacter $\Gm\rightarrow \hat G$.  Note that $\widehat{(\nu^m)} = {(\hat\nu)}^m$ for each $m\in\ZZ$.  We define\index{$G_1$} 
\begin{align*}
G_1:=\ker(\nu).
\end{align*}  We have 
\begin{align*}
G_1=\Res_{F_0/\IQ}(G_0)
\end{align*}
 for some algebraic group $G_0$ defined over $F_0$.  If $\cmfield\neq \realfield$, then $G_0$ is an inner form of a quasi-split unitary group over $\realfield$.  In this case, our Shimura datum is of unitary type (A).  On the other hand, if $\cmfield=\realfield$, then over an algebraic closure of $\realfield$, $G_0$ is orthogonal (type D) or symplectic (type C).  In this paper, we focus on types A and C.

Let \index{$\compact$}$\compact=\compact^p\compact_p$, with \index{$\compact_p$}$\compact_p\subset G\left(\IQ_p\right)$ hyperspecial (i.e. $\compact_p$ is the stabilizer of $\lattice$) and $\compact^p\subset G\left(\adeles_f^p\right)$, be an open compact subgroup of $G\left(\adeles_f\right)$ that is neat in the sense of \cite[Definition 1.4.1.8]{la}.  (Following the usual conventions, $\adeles_f$ denotes the finite adeles of $\IQ$, and $\adeles_f^p$ denotes the finite adeles away from $p$.)  In other words, $\compact$ is a level.  Given a finite place $v$, we say that $v$ is {\it good} with respect to $K$ and $p$ if $v\ndivides p$ and $\compact_v$ is hyperspecial at $v$.  Otherwise, we say $v$ is {\it bad} with respect to $K$ and $p$.  We denote by $\Sigma_{\compact, p}$\index{$\Sigma_{\compact, p}$} the set of places that are bad with respect to $\compact$ and $p$.

\subsubsection{Moduli space of abelian varieties associated to the Shimura datum $\shimuradatum$}

Associated to our PEL-type Shimura datum and level $\compact$ is a moduli space $\Sh:=\Sh_\compact$\index{$\Sh$}\index{$\Sh_\compact$} parametrizing $\shimuradatum$-enriched abelian varieties, i.e. abelian varieties  together with polarization $A\rightarrow A^\dual$, endomorphism, and $\compact$-level structure, satisfying Kottwitz's determinant condition (we refer to \cite[p. 390]{kottwitz}, for details).  (In this paper, we use a superscript $\dual$ to denote the dual of an object.)  Under the conditions of Section \ref{additional-conditions}, $\Sh$ canonically extends to a smooth quasi-projective scheme over $\CO_\reflexfield\otimes_\ZZ\ZZ_{(p)},$ still denoted $\Sh$ or $\Sh_\compact$.  We regard $\Sh=\Sh_\compact$ as a scheme over $\CO_{\reflexfield, \mathfrak{p}}$.  We refer to $\Sh_\compact$ as {\it the PEL-type Shimura variety of level $K$} associated to our choice of Shimura datum, and denote by $\Shp$\index{$\Shp$} or $\Shp_\compact$\index{$\Shp_\compact$} its reduction modulo $\mathfrak{p}$, i.e. $\Shp:=\Sh\times_{\CO_{\reflexfield, \mathfrak{p}}} k(\mathfrak{p})$.
By abuse of notation, we also denote by $\Sh$, resp. $\Shp$, the schemes $\Sh\times_{\CO_{\reflexfield, \mathfrak{p}}} \Witt$, resp. $\Shp\times\mathbb{F}$.

\subsubsection{Decompositions and signatures associated to the Shimura datum}\label{signature-intro}
As noted in \cite[(2.0.3)]{kaCM}, for any $\CO_{\realfield^\Gal}$-algebra $R$, the ring homomorphism
\begin{align*}
\CO_{\realfield}\otimes_{\ZZ} R&\rightarrow \oplus_{\tau\in\T_{\realfield}}R\\
a\otimes r&\mapsto (\tau(a)r)_{\tau\in\T_{\realfield}}
\end{align*}
is an isomorphism if and only if the discriminant $d_{\realfield}$\index{$d_{\realfield}$} of $\realfield/\IQ$ is invertible in $R$.  Given $\tau\in\T_{\realfield}$ and an $\CO\otimes R$-module $M$, we denote by $M_\tau$ the submodule of $M$ annihilated by the set of $a\otimes 1-1\otimes \tau(a)\in \CO_{\realfield}\otimes R,$ i.e. the submodule on which each $a\in\CO_{\realfield}$ acts as scalar multiplication by $\tau(a)$.
If $R$ is a $\CO_{\realfield^\Gal}$-algebra in which $d_{\realfield}$ is invertible and $M$ is a locally free $\CO_{\realfield}\otimes R$-module, then similarly to \cite[(2.0.9)]{kaCM}, we have that the canonical $\CO_{\realfield}\otimes R$-module-homomorphism
\begin{align*}
M\rightarrow\oplus_{\tau\in\T_{\realfield}} M_\tau
\end{align*}
is an isomorphism.
If $\cmfield\neq\realfield$ and $R$ is, in addition, an $\CO_{\cmfield^\Gal}$-algebra, then the action of $\cmfield$ induces a further decomposition
\begin{align*}
M = \oplus _{\tau\in\T_{\cmfield}} M_\tau = \oplus_{\tau\in\Sigma_\cmfield}M_\tau\oplus M_{\tau^\ast} = \oplus_{\sigma\in\T_{\realfield}}M_\sigma^+\oplus M_\sigma^-,
\end{align*}
where, for each $\sigma\in\T_{\realfield},$ $M_\sigma^+$ (resp. $M_\sigma^-$) is the submodule of $M_\sigma$ on which each $a\in\CO_{\cmfield}$ acts as scalar multiplication by $\tau(a)$ (resp. $\tau^\ast(a)$) with $\tau$ the element of $\Sigma_\cmfield$ such that $\tau|_{\realfield} =\sigma$.

For $i=1, 2$, we have a decomposition
\begin{align*}
V_i = \oplus_{\tau\in\T_\cmfield}V_{i, \tau}
\end{align*}
induced by the decomposition $F\otimes_\IQ\IC = \oplus_{\tau\in\T_\cmfield}\IC$ (identifying $a\otimes b$ with $\left(\tau(a)b\right)_{\tau\in\T_\cmfield}$).  Thus we also have a decomposition
\begin{align*}
V_\IC = \oplus_{\tau\in\T_\cmfield}V_\tau,
\end{align*}
with
\begin{align*}
V_\tau:=V_{1, \tau}\oplus V_{2, \tau}
\end{align*}
for all $\tau\in\T_\cmfield$.

For each $\tau\in \T_\cmfield$, we set\index{$a_\tau$}
\begin{align*}
a_\tau := \dim_\IC V_{1, \tau}.
\end{align*}
The {\it signature} of the Shimura datum is $\left(a_\tau\right)_{\tau\in\T_\cmfield}$.  For each $\tau\in\Sigma_\cmfield$, we have
\begin{align*}
n=\begin{cases} a_\tau+a_{\tau^\ast}& \mbox{ in the unitary case (A)}\\
a_\tau & \mbox{ in the symplectic case (C)}.
\end{cases}
\end{align*}
In Case A (so $\cmfield\neq\realfield$), this is the signature of the unitary group $G_0/\realfield$ and the signature at $\tau\in\Sigma_\cmfield$ is $\left(a_\tau^+, a_\tau^-\right)$ with $a_\tau^+:=a_\tau$ and $a_\tau^-:=a_{\tau^\ast}$.  Following the conventions of \cite{moonen, EiMa}, we also define\index{$\cf(\tau)$}
\begin{align*}
\cf(\tau):=a_\tau
\end{align*}
for each $\tau\in\T_\cmfield$, and we denote by\index{$\cf$}
\begin{align*}
\cf := \left(\cf\left(\tau\right)\right)_\tau.
\end{align*}
the {\em signature} of the Shimura datum $\shimuradatum$.

 \subsection{Weights, representations, and automorphic forms} \label{auto_sec}
 We summarize key details about weights and representations, following the approaches of \cite[Sections 2.3 through 2.5]{EiMa} and \cite[Sections 2.1 and 2.2]{EFGMM}.  For additional details, see \cite[Sections 2.3 and 2.4]{EFMV} or \cite[Section 3.2]{CEFMV}.
 \subsubsection{Subgroups}\label{weights-subgroups}
 We denote by $\Levi$\index{$\Levi$} the algebraic group over $\ZZ$ defined by
 \begin{align*}
 \Levi:=\prod_{\tau\in\T_\cmfield}\GL_{a_\tau} = \begin{cases}\prod_{\tau\in\Sigma_\cmfield}\left(\GL_{a_\tau}\times\GL_{a_{\tau^\ast}}\right)\subseteq\prod_{\tau\in\Sigma_\cmfield}\GL_n=\prod_{\tau\in\T_{\realfield}}\GL_n, & \mbox{ in the unitary case (A)}\\
 \prod_{\tau\in\T_{\realfield}}\GL_n, & \mbox{ in the symplectic case (C)}\end{cases}
 \end{align*}
We denote by $\Borel$ a Borel subgroup of $\Levi$, $\Torus$\index{$\Torus$} a maximal torus contained in \index{$\Borel$}$\Borel$, and \index{$\Nilp$}$\Nilp$ the unipotent radical of $\Borel$.
We have a decomposition $\Torus = \prod_{\tau\in\T_\cmfield}\Torus_\tau$, and we have analogous decompositions, denoted analogously, for each algebraic subgroup of $\Levi$.  By choosing an ordered basis for $V_\IC$ compatible with the decompositions from Section \ref{signature-intro}, we identify $\Levi(\IC)$ with a Levi subgroup of $G_1(\IC)$.  We choose such a basis so that furthermore $\Borel_\tau$ is identified with the subgroup of upper triangular matrices in $\GL_{a_\tau^+}$
and $\Torus_\tau$ with $\Torus_{a_\tau^+}:=\Gm^{a_\tau^+}$, which is, in turn, identified with the subgroup of diagonal matrices of $\Levi$.  Note that each of these groups is split over $\CO_{\reflexfield_\mathfrak{p}}$, i.e. any maximal torus in it is isomorphic over $\CO_{\reflexfield_\mathfrak{p}}$ to a product of copies of $\Gm$.

 For a choice of an ordered partition $m_\bullet$ given by
\begin{align}\label{partition-equ}
m_{1, \tau}+\cdots + m_{s_\tau, \tau} = a_\tau,
\end{align}
for each $\tau\in \T_\cmfield$, 
we denote by $P=P_{m_\bullet}$ the associated block upper triangular parabolic subgroup  of $\Levi$ containing $\Borel$, and by $U=U_{m_\bullet}$  the unipotent radical of $P$. Then the Levi subgroup $M=M_{m_\bullet}$ of $P$, $M\cong P/U$, is a block diagonal product of groups 
$\GL_{m_{t, \tau}}$, $t=1, \ldots, s_\tau$, $\tau\in\T_\cmfield$.

Our choice of ordered partitions of $a_\tau$, for all $\tau\in\T_\cmfield$, will be uniquely determined by the geometry of the underlying Shimura variety and its $\mu$-ordinary locus (see 
Equation (\ref{ordered_partition}) below and \cite[\S 2.9]{EiMa} for a detailed explanation). In the following, for $m_\bullet^\mu$ the partition given in Equation (\ref{ordered_partition}), we write
 $\Para=P_{m_\bullet^\mu}$\index{$\Para$}, $\Uni=U_{m_\bullet^\mu}$\index{$\Uni$}, and 
  \index{$\Levii$}$\Levii=M_{m_\bullet^\mu}\cong \Para/\Uni$.  
  
  Note that for any representation $\rho$ of $\Levi$, the associated graded representation $\gr\left(\rho|_{\Para}\right)$ of $\Levii$ and $\rho|_{\Levii}$ are canonically identified.  We also define a Borel subgroup \index{$\Borell$}$\Borell=\Borel\cap\Levii$ of $\Levii$ and \index{$\Nilpp$}$\Nilpp=\Nilp\cap\Levii$ and \index{$\Toruss$}$\Toruss=\Torus\cap\Levii$ the unipotent radical and maximal torus of $\Borell$, respectively.

 \subsubsection{Weights}\label{weights_sec}

We briefly introduce weights and their relationship with algebraic representations.  For more details, see \cite[Section 2.3]{EiMa} \cite[Section 2.4]{EFMV}, \cite[Sections 5.1.3 and 8.1.2]{hida}, \cite[Part II, Chapter 2]{jantzen}, or \cite[Sections 4.1 and 15.3]{FultonHarris}.

 We denote by $X^\ast:=X^\ast(\Torus)$\index{$X^\ast$} the group of characters of $\Torus$.  Via $\Borel/\Nilp\cong \Torus$, we also view $X^\ast(\Torus)$ as characters on $\Borel$.
 Given $\kappa\in X^\ast(\Torus)$ and a $T$-module $M$, we denote by $M[\kappa]$ the $\kappa$-eigenspace of $M$.  We define\index{$X^+(\Torus)$} 
 \begin{align*}
 X^+(\Torus):=\left\{\left(\kappa_{1, \tau}, \ldots, \kappa_{a_\tau, \tau}\right)_{\tau\in\T_\cmfield}\in\prod_{\tau\in\T_\cmfield}\ZZ^{a_\tau, \tau}|\kappa_{i, \tau}\geq \kappa_{i+1, \tau} \mbox{ for all } i\right\}.
 \end{align*}
We identify $X^+(\Torus)$ with the subgroup of $X^\ast(\Torus)$ of {\it dominant} weights in $X^\ast(\Torus)$ via
\begin{align*}
\prod_{\tau\in\T_\cmfield}\diag\left(t_{1, \tau}, \ldots, t_{a_\tau, \tau}\right)\mapsto\prod_{\tau\in\T_\cmfield}\prod_{1\leq i\leq a_\tau}t_{i, \tau}^{\kappa_i, \tau}.
\end{align*}
If $\kappa = \left(\kappa_\tau\right)_{\tau\in\Sigma_\cmfield}=\left(\kappa_{1, \tau}, \ldots, \kappa_{n, \tau}\right)_{\tau\in\Sigma_\cmfield}$ is a dominant weight of $\GL_m$ and $n>m$, then we denote by $(\kappa,0)$ the dominant weight $(\kappa_{1, \tau},\dots, \kappa_{n,\tau}, 0,\dots, 0)_{\tau\in\Sigma_\cmfield}$ of $\GL_n$.
Given $k\in\ZZ$, we denote by $\underline{k}$\index{$\underline{k}$} the element $\kappa\in X^+(\Torus)$ such that
 $\kappa_{i, \tau} = k$ for all $i, \tau$.  We call $\underline{k}$ a {\it parallel, scalar} weight, and in this case, we also sometimes just write $k$ for the weight.  More generally, if for each $\tau$, there exist $k_\tau\in\ZZ$ such that $\kappa_{i, \tau} = k_\tau$ for each $i$, then we call $\kappa = \left(\left(\kappa_{\tau, i}\right)_i\right)_{\tau}$ a {\it scalar} weight, and we write $\underline{k}_\tau :=\kappa_\tau = \left(k_\tau, \ldots, k_\tau\right)$.  In this case, we also sometimes just write $k_\tau$ for the weight at $\tau$.  Also, if $\kappa = \left(\kappa_\tau\right)_\tau$ is such that $\kappa_\tau = \kappa_\sigma$ for all $\tau, \sigma$, we say that $\kappa$ is {\it parallel}.
If $\kappa\in X^+(\Torus)$ is such that $\kappa\neq 0$ and
 $\kappa_{i, \tau}\geq 0$ for all $i, \tau$, we say that $\kappa$ is {\it positive}.  If $\kappa$ is positive and $\tau\in\T_\cmfield$, then we say $\kappa$ {\it is supported at $\tau$}
 if $\kappa_{i, \tau}\neq 0$ for some $i$ and $\kappa_{j, \sigma}=0$ for all $\sigma\neq \tau$ and all $j$. 
 For $\tau\in\T_\cmfield$ and positive $\kappa$, we define
 \begin{align}
d_{\kappa, \tau}&:=\left|\kappa_\tau\right|:=\sum_{i=1}^{a_\tau}\kappa_{i, \tau}\label{sizektdefn}\\
d_\kappa&:= |\kappa|:=\sum_{\tau\in\T_\cmfield}d_{\kappa, \tau}=\sum_{\tau\in\T_F}\left|\kappa_\tau\right|.\nonumber
 \end{align}
We say $\kappa$ is {\it sum-symmetric at $\tau$} if $\kappa$ is positive and $d_{\kappa, \tau} = d_{\kappa, \tau^\ast}$.  We say $\kappa$ is {\it sum-symmetric} if $\kappa$ is positive and sum-symmetric at all $\tau\in\Sigma_{\cmfield}$.  Given $\tau\in\Sigma_\cmfield$ and $\kappa\in X^+(\Torus)$, if $\kappa_{i, \tau}=\kappa_{i, \tau^\ast}$ for all $i\leq\min\left(a_\tau, a_{\tau^\ast}\right)$ and $\kappa_{i, \tau}, \kappa_{i, \tau^\ast} = 0$ for all $i>\min\left(a_\tau, a_{\tau^\ast}\right)$, we say that $\kappa$ is {\it symmetric at $\tau$}.  If $\kappa$ is symmetric at each $\tau\in\Sigma_{\cmfield}$, we say that $\kappa$ is {\it symmetric}.  This is the same condition on weights that occurs without a name in \cite[Theorem 12.7]{shar} and  \cite[Theorem 2.A]{shclassical}.

To each dominant weight $\kappa$, we associate a representation $\rho_\kappa$\index{$\rho_\kappa$} obtained by application a $\kappa$-Schur functor $\schur_\kappa$.\index{$\schur_\kappa$} (See, e.g., \cite[Section 15.3]{FultonHarris}, for details on Schur functors.)  Let $R$ be a $\ZZ_p$-algebra or a field of characteristic $0$, and $V:=V_R:= \oplus_{\tau\in\T_\cmfield} (R^{a_\tau})$ denote the standard representation of $\prod_{\tau\in\T_\cmfield}\GL_{a_\tau}$ over $R$.
If $\kappa$ is a dominant weight, the $\kappa$-Schur functor acts on $R$-modules so that we obtain a representation $\schur_\kappa (V_R)$ 
  of $\prod_{\tau\in\T_\cmfield}\GL_{a_\tau}$, which we denote by $\rho_\kappa:=\rho_{\kappa, R}$.  
 As explained in \cite[Chapter II.2]{jantzen}, if $R$ is furthermore of sufficiently large characteristic or of characteristic $0$, then each representation $\rho_{\kappa, R}$ is irreducible, and furthermore, the set of representations $\rho_{\kappa, R}$ is in bijection with the set of dominant weights $\kappa$.  Following the conventions of \cite[Section 2.3]{EiMa}, when $R$ is such a field and has ring of integers $\CO$, we denote by $\rho_{\kappa, \CO}$ a choice of a $\CO$-lattice in $\rho_{\kappa, R}$.
Also, given a locally free sheaf of modules $\mathcal{F}$ over a $\ZZ_p$-scheme $T$, we write $\schur_\kappa(\mathcal{F})$ for the locally free sheaf of modules over $T$, defined by $\schur_\kappa (\mathcal{F})({\rm Spec} R)=\schur_\kappa (\mathcal{F}({\rm Spec} R))$, for ${\rm Spec} R$  any affine open of $T$.

For each positive dominant weight $\kappa$, by applying a generalized Young symmetrizer, we obtain a projection \index{${\rm pr}_\kappa$}${\rm pr}_\kappa: V^{\otimes d_\kappa}\rightarrow \rho_\kappa$.
 If $\kappa_\tau$ is a positive, dominant weight and $R$ is as above, then the $\kappa_\tau$-Schur functor is $\schur_{\kappa_\tau}(V):=V^{\otimes d_{\kappa, \tau}}\cdot c_{\kappa, \tau}$, where $c_{\kappa, \tau}$ denotes the Young symmetrizer associated to $\kappa_\tau$.  
 As noted in \cite[Lemma 2.4.6]{EFMV}, if $\kappa,\kappa'$ are two positive, dominant weights, then ${\rm pr}_{\kappa+\kappa'}$ factors through the map ${\rm pr}_\kappa\otimes {\rm pr}_{\kappa'}$; we write
\index{${\rm pr}_{\kappa,\kappa'}$}${\rm pr}_{\kappa,\kappa'}$ for the induced projection $\rho_\kappa\otimes \rho_{\kappa'}\to \rho_{\kappa+\kappa'}$.

\begin{defi}\label{twistweight}
For any dominant weight $\kappa=(\kappa_\tau)_{\tau\in\T_F}$, we write 
\begin{align*}
||\kappa||:=(||\kappa_\tau||)_{\tau\in\T}\in \ZZ^{|\T_F|}, 
\end{align*}
where $||\kappa_\tau||\in\ZZ$ is defined as in
\begin{align*}
||\kappa_\tau||:=
\begin{cases}
 |\kappa_\tau|/a_\tau & \mbox{ if $\kappa_\tau$ is scalar}\\
|\kappa_\tau|& \mbox{ otherwise.}
\end{cases}
\end{align*}
(By Equation \eqref{sizektdefn}, if $\kappa_\tau$ is scalar, then $\left|\kappa_\tau\right|$ is a multiple of $a_\tau$.)

Note that $||\kappa_\tau||$  is the unique integer satisfying, 
for $\chi$ any character of $GL_{a_\tau}$,
\begin{align*}
\schur_{\kappa_\tau}(V_\tau)\otimes\chi^{||\kappa_\tau|| }\simeq
\begin{cases}
\schur_{||\kappa_\tau||}\left(\det (V_\tau)\otimes\chi\right) & \mbox{ if $\kappa_\tau$ is scalar}\\
\schur_{\kappa_\tau}(V_\tau\otimes\chi) & \mbox{ otherwise.}
\end{cases}
\end{align*}
\end{defi}

\begin{defi}\label{determinantpower}
For any dominant weight $\kappa=(\kappa_\tau)_{\tau\in\T_F}$, we write \[r(\kappa):=(r(\kappa_\tau))_{\tau\in\T_{\cmfield} }\in\ZZ^{|\T_\cmfield |},\] where \index{$r(\kappa)$}$r(\kappa_\tau)\in\ZZ$ is defined as in \[r(\kappa_\tau):=|\kappa_\tau|\cdot\dim\rho_{\kappa_\tau}/a_\tau.\]
(By definition, if $\kappa$ is scalar, then $r(\kappa)=||\kappa||$.)

Note that $r(\kappa_\tau)$ is the unique integer satisfying the equality
\[\det(\schur_{\kappa_\tau}(f))=\det(f)^{r(\kappa_\tau)}\]
for  $f$ any linear endomorphism of the standard representation of $GL_{a_\tau}$.

\end{defi}

Following the convention of \cite[Definition 2.2.3]{EFGMM}, given a positive integer $e$, we call a dominant weight $\kappa$  {\it admissible of depth $e_\kappa=e$} if the irreducible representation $\rho_\kappa$ of $\Levi$ occurs as a constituent of the representation $\left(V^2\right)^{\otimes e}$ for
\begin{align}\label{exp2-notation}
V^2:=\begin{cases}\oplus_{\tau\in\T_{\realfield}}\Sym^2 V_\tau,& \mbox{ in the symplectic case (C)}\\
\oplus_{\tau\in\Sigma_\cmfield} V_\tau\boxtimes V_{\tau^\ast},& \mbox{ in the unitary case (A)}
\end{cases}
\end{align}
In the above cases, we also define $V_\tau^2$ to be the summand at $\tau$.  By abuse of language, we also speak of being admissible of depth $e$ at $\tau$.  We denote by $\delta(\tau)$\index{$\delta(\tau)$} the weight of $V_\tau^2$. Admissible weights are even in the symplectic case,
and sum-symmetric in the unitary case.

\begin{rmk}\label{sym_rem}
Here, we will be particularly interested in the case of irreducible constituents that arise inside symmetric powers of $V^2$ and, more generally, inside $\boxtimes_\tau \Sym^{e_\tau}\left(V_\tau^2\right)$ for $e_\tau\geq 0$ integers.  
By \cite[Theorem 2.A]{shclassical}, such constituents are symmetric and occur with multiplicity one.  In the case where $a_\tau=a_{\tau^\ast}$ for all $\tau\in\Sigma_\cmfield$, this is the Peter--Weyl Theorem (see, e.g., \cite[Theorem 4.66]{etingof}).
\end{rmk}

\begin{rmk}
Let $\kappa=(\kappa_\tau)_{\tau\in\T}$ be a dominant weight.  
Note that $\rho_\kappa$ is one-dimensional if and only if $\kappa$ is a scalar weight, i.e. $\kappa = \left(\underline{k}_\tau\right)_\tau$ for some nonnegative integers $k_\tau\in\ZZ$.  In this case $\rho_\kappa = \boxtimes_\tau{\det}^{k_\tau}$, i.e. the $k_\tau$-th powers of the top exterior powers.
\end{rmk}

\begin{rmk}\label{divideweight}
Let $R$ be a $\ZZ_p$-algebra or a field of characteristic $0$, and $\Levii$ a Levi subgroup of $\Levi$ as in Section \ref{weights-subgroups}.
For any dominant weight $\kappa'$ of $\Levii$, we denote by $\varrho_{\kappa'}=\varrho_{\kappa',R}$ the irreducible algebraic representation of $\Levii$ over $R$, of highest weight $\kappa'$. If $R$ is of  sufficiently large characteristic  or of characteristic $0$, then for any dominant weight $\kappa$ of $\Levi$, we identify 
\[\rho_\kappa\vert_{\Levii}=\bigoplus_{\kappa'\in\mathfrak{M}_\kappa} \varrho_{\kappa'},\]
where  $\mathfrak{M}_\kappa$ denotes the set of all dominant weights $\kappa'$ of $\Levii$ occurring in $\rho_\kappa\vert_{\Levii}$ (see \cite[Section 2.4]{EiMa}).
\end{rmk}

 \subsubsection{Automorphic forms}\label{autoforms-section}
We recall the construction of automorphic forms on $\Sh$, following the approach of \cite[Section 3.2]{CEFMV}. {\bf Since we are working in the setting of automorphic forms, all the weights that will arise for us are positive and dominant.  Thus, going forward, we only consider positive, dominant weights.}

We denote by $\alpha: \Auniv\rightarrow \Sh$\index{$\Auniv$} the universal abelian scheme and by $\uo$\index{$\uo$} the sheaf $\uo_{\Auniv/\Sh}:=\alpha_\ast\Omega_{\Auniv/\Sh}^1$.\footnote{There are several conventions for the sheaf $\uo$ and closely related sheaves in the literature.  In some of the first named author's prior works, this sheaf was denoted by $\underline{\omega}$.  To avoid confusion with \cite{EiMa}, where $\underline{\omega}$ had a different meaning, we avoid that notation here.  We also note that, in contrast to the present paper, \cite{GN} and some other references denote by $\omega$ the top exterior power of the sheaf $\alpha_\ast\Omega_{\Auniv/\Sh}^1$, but we will explicitly denote the top exterior power as such when we need to take it.}  The sheaf $\uo$ is locally free of rank
\begin{align*}
g:=n[\realfield:\IQ]
\end{align*}
and decomposes, according to Section \ref{signature-intro}, as
\begin{align*}
\uo = \oplus_{\tau\in\Sigma_\cmfield}\left(\uo_\tau\oplus\uo_{\tau^\ast}\right),
\end{align*}
with $\uo_\tau$ (resp. $\uo_{\tau^\ast}$) locally free of rank $a_\tau$ (resp. $a_{\tau^\ast}$).
Consider the locally free sheaves\index{$\CE$} 
\begin{align*}
\CE_\tau&:=\Isom\left(\uo_\tau, \CO_{\Sh, \tau}^{a_\tau}\right)\\
\CE&:=\oplus_{\tau\in\T_\cmfield}\CE_\tau
\end{align*}
 endowed with an action of $\Levi = \prod_{\tau\in\T_\cmfield}\Levi_\tau.$  
 For each positive dominant weight $\kappa$ of $\Torus$ and each irreducible representation $\left(\rho_\kappa, V_\kappa\right)$ of $\Levi$ of weight $\kappa$, the {\it sheaf of weight $\kappa$ (or weight $\rho_\kappa$) automorphic forms} is\index{$\uo^\kappa$}
\begin{align*}
\uo^\kappa:=\CE\times^\Levi V_\kappa
\end{align*}
defined so that
\begin{align}\label{uokap-defn}
\uo^\kappa(R):=\left(\CE\times V_\kappa\otimes R\right)/\left(\left(\ell, m\right) \sim  \left(g\ell, \rho_\kappa\left({ }^tg^{-1}\right)m\right)\right)
\end{align}
for each $\CO_{\reflexfield, \mathfrak{p}}$-algebra $R$.  (N.B. This is closely related to the notion of a {\it frame bundle}.)
Given an $\CO_{\reflexfield, \mathfrak{p}}$-algebra $R$, an {\it automorphic form of weight $\kappa$ and level $K$}, defined over $R$, is a global section of $\omega^\kappa$ on $\Sh_K\times_{\CO_{\reflexfield, \mathfrak{p}}} R.$  
As noted in \cite[Section 2.5]{EiMa},  
$\uo^\kappa$ can be canonically identified with $\schur_\kappa(\uo)$.
Note, also, that if $\kappa = \left(\kappa_\tau\right)_\tau$, then
\begin{align*}
\uo^\kappa = \boxtimes_\tau\uo_\tau^{\kappa_\tau}.
\end{align*}

\begin{rmk}
Excluding the one-dimensional case of $\realfield=\IQ$ with $a_\tau=a_\tau^\ast=1$  (no loss to the present paper, which aims to overcome technical challenges with extending to higher rank the sorts of results that have already been established in low rank), the Koecher principle (\cite[Theorem 2.3 and Remark 10.2]{lan5}) implies that our space of automorphic forms is the same as the one obtained by instead working over a compactification of $\Sh_K$.
\end{rmk}

\subsection{The $\mu$-ordinary locus and its Hasse invariant}\label{hasselocus_sec}
\label{muordinary-bkgd}
We now recall the definitions and key features  of the $\mu$-ordinary locus (following \cite{wedhorn, moonen}) and of the $\mu$-ordinary Hasse invariant (following \cite{GN}) for PEL-type Shimura varieties.
For generalizations of these notions and key results to the context of Hodge-type Shimura varieties, the reader may refer to \cite{KWhasse, wortmann}, although we shall not need them in the present paper.

\subsubsection{The $\mu$-ordinary Newton polygon stratum} We briefly recall the definition and key features of the $\mu$-ordinary Newton polygon stratum $\shpmuord$\index{$\shpmuord$} of $ \Shp$ (see also \cite[Section 2.6]{EiMa}).

\begin{defi}\label{slopes}\label{muNP}
Given the Shimura datum $\shimuradatum$, the {\em $\mu$-ordinary Newton polygon} at $p$, denoted by $\nu_p(\shimuradatum):=\nu_p(n,\cf)$,\index{$\nu_p(\shimuradatum)$}\index{$\nu_p(n,\cf)$} is defined as the amalgamate sum $\nu_p(n,\cf)=\oplus_{\co\in\mathfrak{O}_F}\nu_{\co}(n,\cf)$, where 
for each $\co\in\mathfrak{O}_F$, $\nu_\co(n,\cf)$\index{$\nu_{\co}(n,\cf)$} is the polygon with slopes\index{$a^\co_j$} \[a^\co_j:=\frac{\#\{\tau\in \co |\cf(\tau)>n-j\}}{\#\co}, \text{ for } j=1,\dots, n.\]
\end{defi}

By construction, the $\mu$-ordinary Newton polygon $\nu_p(n,\cf)$ is the lowest Newton polygon at $p$ compatible with the signature $(n,\cf)$ of the Shimura datum.  We say that the polygon $\nu_p(n,\cf)$ is {\em ordinary} if  $a_j^\co \in\{ 0,1\}$ for $j=1, \ldots, n$ (in which case it corresponds to an ordinary abelian variety).  So the polygon $\nu_p(n,\cf)$ is ordinary if and only if $(p)$ is totally split in $\reflexfield$. In the following, abusing notation, we put \index{$\nu(n,\cf)$}$\nu(n,\cf):=\nu_p(n,\cf)$.

\begin{defi}
A $\mathfrak D$-enriched abelian variety $A$  (resp. a point of $\Shp$) over a field containing $\F$ is called {\em $\mu$-ordinary} if its Newton polygon (resp. the Newton polygon of the associated abelian variety) agrees with the $\mu$-ordinary Newton polygon $\nu(n,\cf)$.\end{defi}

By definition,  the $\mu$-ordinary Newton polygon stratum $\shpmuord$ is the reduced subscheme of $\Shp$ consisting of all $\mu$-ordinary points. 

\begin{thm}\cite[(1.6.2) Density Theorem]{wedhorn}
The $\mu$-ordinary Newton polygon stratum $\shpmuord$ is open and dense  in $\Shp$. 
\end{thm}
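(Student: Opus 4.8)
The plan is to prove the two assertions separately: openness will follow from the semicontinuity of Newton polygons, while density is the substantive point and will be reduced to a local deformation-theoretic statement at each closed point of $\Shp$.

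First I would treat openness. Over $\Shp$ we have the $p$-divisible group $\Auniv[p^\infty]$ attached to the universal abelian scheme, carrying the action of $\CO_D$ and the polarization; its Dieudonn\'e crystal decomposes along $\T_F$ compatibly with the $\sigma$-action. By Grothendieck's specialization theorem (in the form of Katz's lower semicontinuity), the Newton polygon of $\Auniv_x[p^\infty]$ rises under specialization, so for any fixed polygon the locus where the Newton polygon lies on or below it is open. By Mazur's inequality the Newton polygon at every point lies above the Hodge polygon determined by the signature $\cf$, and $\nu(n,\cf)$ is by construction (Definition \ref{muNP}) the unique minimal Newton polygon among those compatible with the PEL datum; hence the locus $\shpmuord$ where the Newton polygon equals $\nu(n,\cf)$ coincides with the locus where it is $\le \nu(n,\cf)$, which is open. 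That $\nu(n,\cf)$ is actually attained --- so that $\shpmuord$ is nonempty --- can be seen by exhibiting a single $\mu$-ordinary member with the required endomorphisms and polarization (for instance a suitable isotypic abelian variety produced by Honda--Tate theory), or as a byproduct of the deformation computation below.

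For density the key observation is that an open, nonempty subset of a possibly reducible scheme need not be dense, so openness alone does not suffice; instead I would show that every closed point $x$ of $\Shp$ lies in the closure of $\shpmuord$. Since $\Sh$ is smooth over $\CO_{\reflexfield, \mathfrak{p}}$, the completed local ring $\widehat{\CO}_{\Shp,x}$ is a formal power series ring over $\F$, hence a domain with a unique generic point, and by Serre--Tate theory together with the local model (Rapoport--Zink) description it pro-represents the deformation functor of the PEL $p$-divisible group $\Auniv_x[p^\infty]$. It therefore suffices to prove that the generic deformation is $\mu$-ordinary: in an irreducible scheme any nonempty open is dense, so if $\shpmuord$ meets $\Spec \widehat{\CO}_{\Shp,x}$ then $x \in \overline{\shpmuord}$, and as closed points are dense in the Jacobson scheme $\Shp$ this yields $\overline{\shpmuord} = \Shp$.

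The main obstacle is thus precisely this last point: computing the Newton polygon of the universal deformation and identifying its generic value with $\nu(n,\cf)$. Here I would use an explicit model of the universal deformation --- via Zink's theory of displays or Cartier--Dieudonn\'e theory adapted to the $\CO_D$-action and polarization --- writing the Frobenius of the universal display over $\widehat{\CO}_{\Shp,x}$ in terms of the Hodge filtration prescribed by the signature. Because the Dieudonn\'e module decomposes along $\T_F$ and $\sigma$ cyclically permutes the embeddings inside each orbit $\co \in \mathfrak{O}_F$, the computation splits orbit by orbit, and over the generic point one checks that the slopes of $\Frob$ on the $\co$-part are exactly the averages $a^\co_j = \#\{\tau\in\co \mid \cf(\tau) > n-j\}/\#\co$ of Definition \ref{muNP}. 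By Mazur's inequality no deformation can have a lower Newton polygon, so the generic deformation attains the minimum $\nu(n,\cf)$ and is $\mu$-ordinary; combined with the reduction above this gives density. (Alternatively, one may route density through the Ekedahl--Oort stratification, identifying $\shpmuord$ with the unique open Ekedahl--Oort stratum and invoking de Jong--Oort purity to control the codimension of the complementary Newton strata, but the deformation argument is the most self-contained.)
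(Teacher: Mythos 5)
The first thing to note is that the paper does not prove this statement at all: it is quoted directly from Wedhorn \cite{wedhorn}, so the only meaningful comparison is with Wedhorn's original argument (and with Moonen's alternative proof via Ekedahl--Oort theory, which the paper cites in the theorem immediately following). Measured against that, your outline is essentially a reconstruction of Wedhorn's strategy: openness from Grothendieck--Katz semicontinuity together with the fact that $\nu(n,\cf)$ is minimal among polygons compatible with the PEL datum (here one should be careful that the needed lower bound is the group-theoretic Mazur inequality of Rapoport--Richartz, i.e.\ minimality among polygons respecting the $\CO_F$-action orbit by orbit, not just the classical Mazur inequality against the Hodge polygon of the underlying abelian variety); and density by reducing, via smoothness of $\Shp$ and Serre--Tate theory, to the claim that the universal PEL deformation at each point is generically $\mu$-ordinary. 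The reduction steps are all correct: $\widehat{\CO}_{\Shp,x}$ is a domain, its points map to generizations of $x$, Newton polygons are insensitive to extension of the residue field, and $\Shp$ is Jacobson, so generic $\mu$-ordinariness of deformations does imply density.

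The gap is that the step you yourself flag as ``the main obstacle'' is never actually carried out, and it is not a routine verification: proving that the generic fiber of the universal display (with its $\CO_D$-action and polarization) has slopes exactly $a^\co_j$ is the real content of Wedhorn's paper. Writing down a display of the universal deformation compatibly with the PEL structure, and then computing the Newton polygon of its generic fiber---e.g.\ exhibiting enough of the Frobenius matrix to bound the slopes from below, and combining with the Rapoport--Richartz bound from above---occupies several sections of \cite{wedhorn}; the polarization in type C and the $\sigma$-orbit structure in type A both enter nontrivially. So as written, your proposal is a correct and well-organized reduction of the theorem to its hard core, but the hard core is described rather than proved. (Your parenthetical alternative---identifying $\shpmuord$ with the open Ekedahl--Oort stratum---is Moonen's route \cite{moonen}, and it has the same feature: that identification is itself the substantive theorem, not a shortcut around one.)
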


\begin{thm}\cite[Theorem 3.2.7]{moonen} The $\mu$-ordinary Newton polygon stratum is also an Ekedahl--Oort stratum. I.e., there exists a unique up to isomorphism $\mu$-ordinary $\mathfrak D$-enriched truncated \BT group of level 1over $\mathbb F$.
 Furthermore,  there exists a unique up to isomorphism $\mu$-ordinary $\mathfrak D$-enriched \BT group over $\mathbb F$.\end{thm}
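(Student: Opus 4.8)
The plan is to translate the geometric assertion into a classification statement about Dieudonn\'e modules over $\F$ and then prove the two uniqueness claims, from which the identification of the $\mu$-ordinary Newton stratum with an Ekedahl--Oort stratum will follow. Recall that a point of $\Shp$ over $\F$ carries a $\shimuradatum$-enriched \BT group $A[p^\infty]$, whose Newton type is its Newton polygon and whose Ekedahl--Oort type is recorded by the truncation $A[p]$. Via (contravariant) Dieudonn\'e theory, I would first replace $\shimuradatum$-enriched \BT groups (resp.\ truncated \BT groups of level $1$) over $\F$ by Dieudonn\'e modules $M$ over $\Witt$ (resp.\ over $\F$) equipped with Frobenius $\Frob$ and Verschiebung $\mathrm{Ver}$, a $\CO_D\otimes\Witt$-action compatible with $\ast$ and with the polarization pairing, and satisfying Kottwitz's determinant condition for the signature $\cf$. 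Since $p$ is unramified in $\cmfield$, such an $M$ decomposes along the embeddings $\T_\cmfield$, and $\Frob,\mathrm{Ver}$ permute the isotypic pieces according to the $\sigma$-action, grouping them into the $\sigma$-orbits $\co\in\mathfrak{O}_F$.

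First I would construct the standard $\mu$-ordinary object orbit by orbit. Fixing an orbit $\co$ of size $f=\#\co$ with the values $\cf(\tau)$, $\tau\in\co$, Definition \ref{muNP} prescribes the slopes $a^\co_j=\#\{\tau\in\co\mid\cf(\tau)>n-j\}/f$. I would write down an explicit Dieudonn\'e module $M^\co$ whose isocrystal realizes exactly these slopes as a direct sum of isoclinic pieces, each an Oort-minimal simple \BT group of the appropriate slope, with the $\CO_D\otimes\Witt$-action built from the given embeddings; assembling $M=\oplus_\co M^\co$ and imposing the polarization pairing (which matches $\co$ with $\co^\ast$ in type (A), and is realized self-dually in type (C)) produces a $\shimuradatum$-enriched \BT group whose Newton polygon is $\nu(n,\cf)$ by construction. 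This exhibits a $\mu$-ordinary object and, by truncation, a $\mu$-ordinary truncated \BT group of level $1$.

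For uniqueness at level $1$, I would appeal to the classification of $\shimuradatum$-enriched truncated \BT groups of level $1$ over $\F$ in terms of a combinatorial invariant (a set of admissible cosets in the Weyl group of $G$, equivalently the $F$-zip data attached to $A[p]$): the Ekedahl--Oort strata are indexed by this set, which possesses a unique maximal (generic) element, and a direct computation with the decomposition above identifies the $\mu$-ordinary type with precisely this generic element; hence the $\mu$-ordinary truncated \BT group is unique up to isomorphism. For the full \BT group, I would use that the object constructed above is completely slope divisible with isoclinic graded pieces that are individually Oort-minimal, so that the slope filtration splits canonically over $\F$; since a minimal \BT group is determined up to isomorphism by its Newton polygon over an algebraically closed field, the summands, and hence $M$ together with its structure, are rigid, giving uniqueness of the $\mu$-ordinary \BT group.

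Finally, to identify $\shpmuord$ with an Ekedahl--Oort stratum, the uniqueness of the $\mu$-ordinary truncated \BT group shows the Ekedahl--Oort type is constant on $\shpmuord$, so $\shpmuord$ is contained in a single (necessarily generic) Ekedahl--Oort stratum $S_w$. The reverse containment $S_w\subseteq\shpmuord$ is where I expect the main difficulty: the Ekedahl--Oort type a priori only bounds the Newton polygon from below, so one must show that a point whose truncated \BT group is $\mu$-ordinary already has $\mu$-ordinary Newton polygon. This is exactly the rigidity of minimal \BT groups: by Oort's theorem on minimal $p$-divisible groups, in its PEL refinement, the locus where $A[p]$ is isomorphic to the minimal $\mu$-ordinary truncated \BT group coincides with the locus where $A[p^\infty]$ is the minimal \BT group of the corresponding Newton polygon. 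Combining this with the Density Theorem quoted above, both $\shpmuord$ and $S_w$ are open and dense, and the two inclusions force $\shpmuord=S_w$, completing the identification.
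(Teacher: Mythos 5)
First, a point of calibration: the paper does not prove this statement at all --- it is quoted as \cite[Theorem 3.2.7]{moonen}, so your proposal must be measured against Moonen's argument. Your overall architecture (standard objects built orbit by orbit from the slopes $a^\co_j$, the Weyl-coset classification of $\mathfrak{D}$-enriched truncated Barsotti--Tate groups of level $1$, rigidity for the full Barsotti--Tate group, and the identification of $\shpmuord$ with the generic Ekedahl--Oort stratum via two inclusions) does broadly reproduce Moonen's, and your use of Oort's minimality theorem for the inclusion of the Ekedahl--Oort stratum into the Newton stratum is a legitimate, more modern substitute for Moonen's direct argument (Oort's theorem postdates it). Two small blemishes there: the closing appeal to density is superfluous (two open dense sets need not coincide; it is the two inclusions that do the work), and the isoclinic pieces are not ``simple'' in general.

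The genuine gap is that both uniqueness claims are argued only for the standard object you construct, not for an arbitrary $\mu$-ordinary object, which is where all the content lies. For the full Barsotti--Tate group, uniqueness means: \emph{any} $\mathfrak{D}$-enriched Barsotti--Tate group $X$ over $\F$ with Newton polygon $\nu(n,\cf)$ is isomorphic, with its structure, to the standard one. Your justification --- ``the slope filtration splits canonically over $\F$'' --- is precisely what must be proved and is false for general Barsotti--Tate groups over an algebraically closed field: for slopes $\{1/3,2/3\}$ the isoclinic pieces are rigid (one-dimensional formal groups are classified by their height, and the slope-$2/3$ piece is a Serre dual of such), yet isomorphism classes vary within that Newton stratum, so non-split slope filtrations exist. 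The splitting in the $\mu$-ordinary case is a theorem whose proof needs both the $\CO_D$-action and the special shape of the $\mu$-ordinary slopes: one must show the relevant $\mathrm{Ext}^1$-groups between the elementary pieces, computed in the category of Barsotti--Tate groups with $\CO_D$-action, vanish (a $\sigma$-linear algebra computation that is the heart of Moonen's proof), and one must separately show that each isoclinic graded piece with its induced structure is the standard elementary one, since isoclinic groups with extra structure are not unique for general slopes and multiplicities. Similarly, your level-$1$ argument only computes that the \emph{standard} object's Ekedahl--Oort invariant is the maximal Weyl coset; it does not show that every point of $\shpmuord$ has that invariant, i.e.\ constancy of the Ekedahl--Oort type on the Newton stratum, which is exactly the implication ``$\mu$-ordinary $\Rightarrow$ standard $p$-torsion'' and cannot be extracted from the classification alone. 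The non-circular order is Moonen's: first prove rigidity of the full Barsotti--Tate group via the Ext-vanishing, deduce level-$1$ uniqueness by truncation, and only then combine with your Oort-minimality step to identify the two strata.
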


The latter result is the key ingredient in the construction of the $\mu$-ordinary Hasse invariant  in \cite{GN}.

\subsubsection{$\mu$-ordinary Hasse invariant }\label{hasse_sec}
 We now recall the definition  and the key features of the $\mu$-ordinary Hasse invariant (see also \cite[Section 2.7]{EiMa}).
In the following, $|\omega|$ denotes the Hodge line bundle over $\Sh$:\index{$|\omega|$}
\begin{align*}
|\omega|:=\wedge^{\rm top} \omega_{\CA/\Sh}, 
\end{align*}
where $\wedge^{\rm top}$ denotes the top exterior power.

\begin{thm}\cite[Theorem 1.1]{GN}\label{hasse_thm}
There exists an explicit positive integer $m_0\geq 1$,\index{$m_0$} and a section $$\HA_\mu\in H^0(\Shp,|\omega|^{m_0})$$\index{$\HA_\mu$}
such that: 
\begin{enumerate}
\item The non-vanishing locus of $\HA_\mu$ is the $\mu$-ordinary locus of $\shpmuord$.
\item The construction of $\HA_\mu$ is compatible with varying the level $\compact^{(p)}$.
\item The section $\HA_\mu$ extends to the minimal compactification of $\Shp$.
\item A power of $\HA_\mu$ lifts to characteristic zero.
\end{enumerate}
\end{thm}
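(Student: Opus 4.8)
The plan is to construct $\HA_\mu$ explicitly from the action of Frobenius on the universal Barsotti--Tate group and then to read off its four properties, with property (1) resting essentially on Moonen's classification just recalled. Write $\mathcal{G}:=\CA[p^\infty]$ for the universal \BT group over $\Shp$, and let $\mathbb{D}:=\mathbb{D}(\mathcal{G})$ be its (contravariant) Dieudonn\'e crystal, a locally free $\CO_\cmfield\otimes\CO_{\Shp}$-module carrying the Hodge filtration $\omega=\omega_{\CA/\Shp}\subset\mathbb{D}$, the Frobenius $F$ and Verschiebung $V$, and the decomposition $\mathbb{D}=\oplus_{\tau\in\T_\cmfield}\mathbb{D}_\tau$ of Section \ref{signature-intro}. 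Absolute Frobenius $\sigma$ acts on $\T_\cmfield$ by $\tau\mapsto\tau\circ\sigma$, and $F,V$ are $\sigma$-linear maps permuting the graded pieces along each $\sigma$-orbit $\co\in\mathfrak{O}_\cmfield$; in particular $V$ induces maps whose interaction with the $\omega_\tau$ is governed by the signature $\cf$.

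First I would work one $\sigma$-orbit $\co$ at a time, setting $f_\co:=\#\co$ and $|\omega_\co|:=\wedge^{\mathrm{top}}\!\left(\oplus_{\tau\in\co}\omega_\tau\right)$. Composing the relevant Verschiebung (equivalently Frobenius) maps once around the orbit produces a canonical $\CO_{\Shp}$-linear map of line bundles
\[
|\omega_\co|\longrightarrow |\omega_\co|^{\otimes p^{f_\co}},
\]
whose target is the $\sigma^{f_\co}$-twist of $|\omega_\co|$ (on a line bundle this is the $p^{f_\co}$-th tensor power). Taking its top exterior power exhibits a canonical global section $\HA_\co$ of $|\omega_\co|^{\otimes m_\co}$, for an explicit exponent $m_\co$ dictated by $\cf$ and $f_\co$ (the model case being $m_\co=p^{f_\co}-1$, as for modular forms when $f_\co=1$). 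After raising each $\HA_\co$ to a suitable power and multiplying over all orbits to equalize into a single tensor power of the full Hodge bundle $|\omega|=\bigotimes_\co|\omega_\co|$, I obtain a section $\HA_\mu\in H^0(\Shp,|\omega|^{m_0})$, with $m_0$ the exponent forced by this equalization.

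The main obstacle, and the heart of the argument, is property (1): that the non-vanishing locus of $\HA_\mu$ equals the $\mu$-ordinary stratum $\shpmuord$. This is where the uniqueness of the $\mu$-ordinary \BT group (Moonen's classification stated above) is indispensable. I would show that the orbit-wise composite is an isomorphism of line bundles at a geometric point $x$ precisely when the iterated Frobenius attains, on each orbit, the maximal rank compatible with $\cf$, which by definition is exactly the condition that the Newton polygon of $\mathcal{G}_x$ equals the lowest polygon $\nu(n,\cf)$ of Definition \ref{slopes} --- i.e. that $x$ is $\mu$-ordinary. The delicate step is to translate the combinatorics of the slopes $a^\co_j$ into the statement that a single determinant detects this extremal-rank condition; once this is done, openness and density of $\shpmuord$ (Wedhorn's density theorem) together with the uniqueness of the $\mu$-ordinary \BT group pin down the vanishing scheme of $\HA_\mu$ as supported exactly on the complement of $\shpmuord$.

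The remaining properties are comparatively formal. For (2), the construction uses only $\mathcal{G}=\CA[p^\infty]$, $\omega$, and Frobenius, all functorial for the finite \'etale transition maps in the prime-to-$p$ level $\compact^{(p)}$, so $\HA_\mu$ is compatible with varying $\compact^{(p)}$. For (3), one extends $\mathcal{G}$ to the semi-abelian degeneration and $\omega$ to its canonical extension over the minimal compactification, checks that the orbit-wise maps extend across the boundary, and invokes the Koecher-type results recalled in Section \ref{autoforms-section} to conclude that $\HA_\mu$ extends. For (4), one uses that $|\omega|^{m_0}$ is ample on the projective minimal compactification, so that for a high enough multiple the reduction map from characteristic-zero global sections is surjective (Serre vanishing together with cohomology-and-base-change), whence a power of $\HA_\mu$ lifts to characteristic $0$.
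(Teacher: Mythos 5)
There is a genuine gap, and it sits exactly where the theorem has content. Your construction composes Verschiebung on the Hodge bundle once around a Frobenius orbit $\co$ and then takes determinants. But (fiberwise) the image of Verschiebung is $\ker(F)=\omega^{(p)}$, and Verschiebung shifts the $\T_F$-grading one step along the orbit; hence the composite restricted to $\omega_\tau$ factors, at the $j$-th step, through a Frobenius twist of $\omega_{\tau_j}$ for the intermediate embeddings $\tau_j\in\co$, a bundle of rank $\cf(\tau_j)$. So at every geometric point of $\Shp$ the composite $\omega_\tau\to\omega_\tau^{(p^{e_\co})}$ has rank at most $\min_{\tau'\in\co}\cf(\tau')$, and its $\cf(\tau)$-th exterior power is \emph{identically zero} as soon as $\cf(\tau')<\cf(\tau)$ for some $\tau'$ in the orbit. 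The signature is non-constant along some orbit precisely when the ordinary locus is empty (i.e. when $(p)$ is not totally split in $\reflexfield$), which is the only situation in which this theorem goes beyond the classical Hasse invariant. Thus your $\HA_\co$, and hence your $\HA_\mu$, is the zero section in every case of interest; property (1) fails outright (the non-vanishing locus would be empty, not $\shpmuord$), and the appeal to Moonen's classification cannot repair this, since the vanishing occurs on all of $\Shp$, including the $\mu$-ordinary stratum. Your argument for (1) presupposes a nonzero section that the construction never produces.

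The missing idea — and the actual substance of Goldring--Nicole's proof, recalled in the paper in Section \ref{hasse_sec} and in Lemma \ref{lemma1} (\cite[Lemmas 3.1 and 3.2]{GN}) — is a divisibility-and-division step carried out at the crystalline level, one $\tau$ at a time rather than one orbit at a time. One composes Frobenius $e_\tau$ times on the full module $H^1_{{\rm crys},\tau^*}$ over $\Witt$ (not on $\omega$ mod $p$), takes the $\cf(\tau)$-th exterior power $\Phi_{\tau^*}$, and proves that $\Phi_{\tau^*}$ is divisible by $p^{c_{\tau^*}}$, where $c_{\tau^*}=\sum_{\tau'\in\co_\tau,\,\cf(\tau')>\cf(\tau)}\left(\cf(\tau')-\cf(\tau)\right)$ records exactly the signature variation responsible for the vanishing above. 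Only after dividing by $p^{c_{\tau^*}}$ — a division that is meaningless over $\Shp$ and must be done over $\Witt$ — does one reduce mod $p$; the reduction kills ${\mathcal W}^{(p^{e_\tau})}_{\tau^*}$, hence descends to the quotient line bundle, and dualizing yields the partial Hasse invariant $\HA_\tau\in H^0\left(\Shp,|\omega_\tau|^{p^{e_\tau}-1}\right)$, whose weight is $p^{e_\tau}-1$ \emph{independently} of the signature (contrary to your "exponent dictated by $\cf$"). Then $\HA_\mu=\prod_{\tau\in\T_F}\HA_\tau^{m_\tau}$ with $m_\tau=m_0/(p^{e_\tau}-1)$ and $m_0=\lcm_{\tau\in\T_F}(p^{e_\tau}-1)$, and non-vanishing exactly on $\shpmuord$ is checked on the standard $\mu$-ordinary model supplied by Moonen's classification. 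Your sketches of (2)--(4) (functoriality in $\compact^{(p)}$, extension over the minimal compactification, lifting a power via ampleness of $|\omega|$) are the standard formal arguments, but they only apply once the section has been correctly constructed.
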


By construction  (\cite[Definition 3.5]{GN}),
$m_0:={\rm lcm}_{\tau\in\T_F}(p^{e_\tau}-1)$, where \index{$e_\tau$}$e_\tau:=\#\co_\tau$, for  $\tau\in\T_F$.

In \cite[Definition 3.5]{GN}, Goldring--Nicole define the $\mu$-ordinary Hasse invariant $\HA_\mu$ (in {\em loc. cit.} denoted by $^\mu H$)  as\index{$m_\tau$} \[\HA_\mu:=\prod_{\tau\in\T_F} \HA_\tau^{m_\tau},
\text{ where } m_\tau=\frac{\mathrm{lcm}_{\tau'\in\T_F}(p^{e_{\tau'}}-1)}{p^{e_\tau}-1},\]
where $\HA_\tau \in H^0(\Shp,|\omega_\tau|^{p^{e_\tau}-1})$ denotes the $\tau$-Hasse invariant  (\cite[Definition 3.3]{GN}, in {\em loc.cit.} denoted by $^\tau H$) and the product is over all elements of $\T_F$.  In the following, for each $\tau\in\T_F$, we denote the weight of the $\tau$-Hasse invariant by \index{$\kappa_{\ha,\tau}$}$\kappa_{\ha,\tau}$; it is the scalar weight $(p^{e_\tau}-1)$ supported at $\tau$.  

For any subset $\Sigma\subseteq \T_F$, we define the $\Sigma$-Hasse invariant as \index{$\HA_\Sigma$}\[\HA_\Sigma:= \prod_{\tau\in\Sigma} \HA_\tau\in H^0(\Shp,\omega^{\kappa_{\ha,\Sigma}}),\] where by definition 
\begin{align*}
(\kappa_{\ha,\Sigma})_\tau:=
\begin{cases}
\kappa_{\ha,\tau} & \mbox{ for }\tau\in\Sigma\\
0 &  \mbox{ otherwise.} 
\end{cases}
\end{align*}
In particular, the weight $\kappa_{\ha,\Sigma}$\index{$\kappa_{\ha,\Sigma}$} is scalar and supported at $\Sigma$.
In the following, for $\Sigma=\T_F$, we write $\HA:=\HA_{\T_F}$ of scalar weight  \index{$\kappa_{\ha}$}$\kappa_{\ha}:=\kappa_{\ha,\T_F}=(\kappa_{\ha,\tau})_{\tau\in\T_F}$.

Finally, for any $\underline{b}=(b_\tau)_{\tau\in\T}\in\ZZ^{|\T|}$, we write  $\underline{b}\cdot \kappa_\ha$, or just $\underline{b} \kappa_\ha$, for the scalar weight $\left(b_\tau (p^{e_\tau}-1)\right)_{\tau\in\T}$, and we set 
\[\HA^{\underline{b}}:=\prod_{\tau\in\T_F} \HA_\tau^{b_\tau} \in H^0(\Shp,\omega^{\underline{b}\cdot \kappa_\ha}).\]
With these conventions, the $\mu$-ordinary Hasse invariant \index{$\HA_\mu$}$\HA_\mu=\HA^{\underline{m_0}}$ is a scalar-valued automorphic form of {\em parallel} weight
$\underline{\mathrm{ lcm}_{\tau\in\T_F}(p^{e_\tau}-1)}$.

In the following, we denote by $\shmuord$\index{$\shmuord$} the formal completion of $\Sh$ along $\shpmuord$.  We follow the convention of (abusing language and) referring to the formal scheme $\shmuord$ as the $\mu$-ordinary locus over $\Witt$.

\subsection{Automorphic sheaves over the $\mu$-ordinary locus}\label{muord_sec}
We briefly recall previous results  of the restriction  of automorphic sheaves to $\shmuord$.
We refer to \cite[Sections 3,4, and 6]{EiMa} for details.

\subsubsection{Slope filtration and associated graded module}
Let $\CA[p^\infty]$ denote the $p$-divisible part of the universal abelian scheme $\CA$ over $\Sh$. 
For any $\sigma$-orbit $\co$ in $\T_F$, we denote by \index{${\mathfrak p}_\co$}${\mathfrak p}_\co$ the associated prime of $F$ above $p$, and write \index{$\CG_\co$}$\CG_\co=\CA[{\mathfrak p}^\infty_\co]$. Hence, $\CA[p^\infty]=\oplus_{\co\in\mathfrak{O}_F}\CG_\co$.

By \cite[\S 3]{Man05} (see also \cite[Proposition 3.1.1]{EiMa}), the restriction to $\shmuord$ of
$\CA[p^\infty]$, resp. $\CG_\co$ for any $\co\in\mathfrak{O}_F$, is completely slope divisible, and it admits a slope filtration over $\shmuord$, which we denote by $\CA[p^\infty]_\bullet$, resp. $\CG_{\co \bullet}$. (Note that  the slope filtration of $\CG_\co$ agrees with the filtration induced by the slope filtration of $\CA[p^\infty]$.)
We write $\gr(\CA[p^\infty])$ (resp. $\gr(\CG_\co)$) for the associated graded $\mathfrak{D}$-enriched \BT group.

The slope filtration of $\CA[p^\infty]$ induces a filtration $\uo_\bullet$ of the  $\CO_\shmuord \otimes_\Witt\CO_F$-module $\uo$, and a filtration  $\uo_{\co\bullet}$ of the $\CO_\shmuord \otimes_\Witt\CO_{F,\mathfrak{p}_\co}$-module $\uo_\co$. 
We denote by $\underline{\uo}:=\gr(\uo)$, resp. $\underline{\uo}_\co:=\gr(\uo_\co)$, the associated graded sheaf over $\shmuord$. The sheaf $\underline{\uo}$ is a locally free $\CO_\shmuord \otimes_\Witt\CO_F$-module, and  $\underline{\uo_\co}$ is a locally free $\CO_\shmuord \otimes_\Witt\CO_{F,\mathfrak{p}_\co}$-module. 

For each $\tau\in\T_\cmfield$, let $s_\tau$ denote the number of distinct slopes of the polygon $\nu_{\co_\tau}(n,\cf)$ (see Definition \ref{muNP}),  and set for $1\leq t\leq s_\tau$
\begin{align}\label{ordered_partition}
m_{t,\tau}:=\rk (\gr^t(\uo_\tau)).
\end{align}
By construction,  $m_{t,\tau}\geq 0$  for all $1\leq t\leq s_\tau$, and $m_{1,\tau}+\cdots +m_{s_\tau,\tau}=a_\tau$  for all $\tau\in\T_\cmfield$ (see \cite[\S 4.2]{EiMa} for an explicit description of the partition given the Shimura datum).

\subsubsection{OMOL sheaves}\label{omol-bkgd}
Recall the subgroups $\Para, \Uni, \Levii$ of $\Levi$ introduced in Section \ref{weights-subgroups}.  In \cite[Definition 4.2.1]{EiMa}, for any positive dominant weight $\kappa'$ of $\Levii$,  we introduce the sheaf $\underline{\uo}^{\kappa'}:=\gr(\uo)^{\kappa'}$ over $\shmuord$, which we call an
{\it OMOL} sheaf.
In \cite[Proposition 4.3.1]{EiMa}, for any weight $\kappa$ of $\Levi$, we describe the restriction  to $\shmuord$ of the automorphic sheaf $\uo^\kappa$ in terms of these auxiliary sheaves.  More precisely,  we prove the following result.

\begin{prop}\cite[Proposition 4.3.1]{EiMa}\label{filtration_prop}
Let $\kappa$ be a  weight of $\Levi$.  The sheaves, $\uo$ and $\uo^\kappa$ are defined over $\shmuord$.  Moreover, each of the following holds.
\begin{enumerate}
\item Each standard $\Uni$-stable filtration of $\rho_{\kappa\vert \Para}$  induces a filtration on $\uo^\kappa$.
\item The sheaf  $\gr(\uo^\kappa)$ is independent of the choice of a standard filtration on $\rho_{\kappa\vert \Para}$.
More precisely, there is a canonical identification\index{$\iota_\kappa$}
 \[\iota_\kappa:\gr(\uo^\kappa)\simeq  \oplus_{\kappa'\in\mathfrak{M}_\kappa}\underline{\uo}^{\kappa'},\] for $\mathfrak{M}_\kappa$ as in Remark \ref{divideweight}.
 \item There is a canonical projection \index{$\varpi^\kappa$}$\varpi^\kappa:\omega^\kappa\to\underline{\uo}^\kappa$, which is an isomorphism if $\kappa$ is scalar. 
\end{enumerate}\end{prop}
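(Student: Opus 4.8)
The plan is to realize the automorphic sheaf $\uo^\kappa$ as a bundle associated to a torsor and then to transport the slope filtration on $\uo$ through this construction, reducing everything to branching for the pair $\Levii\subseteq\Levi$. Recall from Section \ref{autoforms-section} that $\uo^\kappa=\CE\times^\Levi V_\kappa$, where $\CE=\oplus_\tau\Isom(\uo_\tau,\CO_{\Sh,\tau}^{a_\tau})$ is a $\Levi$-torsor and the $\Levi$-action is the twisted one of Equation \eqref{uokap-defn}. Over $\shmuord$ the slope filtration $\uo_\bullet$ of $\uo$ is, by Equation \eqref{ordered_partition}, a flag whose graded pieces have ranks $m_{t,\tau}$; choosing frames carrying $\uo_\bullet$ to the standard flag of type $m_\bullet^\mu$ reduces the structure group of $\CE$ from $\Levi$ to the parabolic $\Para$, yielding a $\Para$-torsor $\CE_\Para$ with $\uo^\kappa|_\shmuord=\CE_\Para\times^\Para(\rho_\kappa|_\Para)$. (That $\uo$ and $\uo^\kappa$ are \emph{defined over} $\shmuord$ is then immediate, since they are the restrictions of the globally defined sheaves on $\Sh$; the content of the proposition is the extra filtration structure available over $\shmuord$.) This reduction is the only geometric input; the rest is representation theory pushed through the exact functor $\CE_\Para\times^\Para(-)$.

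For part (1), a standard $\Uni$-stable filtration $0=F^0\subseteq\cdots\subseteq F^r=\rho_\kappa|_\Para$ is, by definition, a filtration by $\Para$-subrepresentations. Since $\CE_\Para\times^\Para(-)$ is exact, it carries this chain to a filtration of $\uo^\kappa|_\shmuord$ by locally free subsheaves, which gives (1). For part (2), I would invoke the canonical identification recorded in Section \ref{weights-subgroups}: the associated graded $\gr(\rho_\kappa|_\Para)$ is a $\Levii$-representation (because $\Uni$ acts trivially on each graded piece), independent of the chosen $\Uni$-stable filtration, and canonically isomorphic to $\rho_\kappa|_\Levii$. Pushing $\CE_\Para$ along $\Para\twoheadrightarrow\Para/\Uni=\Levii$ produces a $\Levii$-torsor $\CE_\Levii$ that is exactly the frame bundle of $\underline{\uo}=\gr(\uo)$. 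Applying $\CE_\Levii\times^\Levii(-)$ to this isomorphism and then to the branching decomposition $\rho_\kappa|_\Levii=\oplus_{\kappa'\in\mathfrak{M}_\kappa}\varrho_{\kappa'}$ of Remark \ref{divideweight} yields
\[\gr(\uo^\kappa)\cong\CE_\Levii\times^\Levii(\rho_\kappa|_\Levii)\cong\bigoplus_{\kappa'\in\mathfrak{M}_\kappa}\underline{\uo}^{\kappa'},\]
which is the asserted identification $\iota_\kappa$, manifestly independent of the filtration.

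For part (3), the weight $\kappa$ is itself the highest weight occurring in $\mathfrak{M}_\kappa$, with multiplicity one, and its summand is $\underline{\uo}^\kappa$; because it sits at the extreme end of the filtration, it is realized as the top quotient, giving a canonical $\Para$-equivariant surjection $\rho_\kappa|_\Para\twoheadrightarrow\varrho_\kappa$ and hence the projection $\varpi^\kappa\colon\uo^\kappa\to\underline{\uo}^\kappa$. When $\kappa$ is scalar, $\rho_\kappa=\boxtimes_\tau\det^{k_\tau}$ is one-dimensional and $\Uni$ acts trivially on it (any element of the unipotent radical has determinant $1$), so $\rho_\kappa|_\Para=\varrho_\kappa$ already, the induced filtration is trivial, and $\varpi^\kappa$ is an isomorphism.

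I expect the main obstacle to be part (3): pinning down the exact variance (sub versus quotient) of the extreme graded piece, which is sensitive both to the transpose-inverse twist in Equation \eqref{uokap-defn} and to the slope ordering of $\uo_\bullet$. One must check that, with these conventions, $\varrho_\kappa$ is genuinely a quotient of $\rho_\kappa|_\Para$, so that $\varpi^\kappa$ is an honest projection rather than an inclusion. A secondary technical point is verifying that the branching identity of Remark \ref{divideweight}, and with it the splitting of $\gr(\uo^\kappa)$, remains valid integrally over $\Witt$ for the weights in play, rather than only in characteristic zero or sufficiently large characteristic.
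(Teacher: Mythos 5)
Your proposal is correct and follows essentially the same route as the actual proof, which this paper does not reproduce: Proposition \ref{filtration_prop} is quoted from \cite[Proposition 4.3.1]{EiMa}, and the argument there is the one you give \textemdash\ reduce the structure group of the frame torsor $\CE$ to $\Para$ via the slope filtration, push $\Uni$-stable filtrations through the exact associated-bundle functor, and apply the branching decomposition of Remark \ref{divideweight} together with the identification $\gr\left(\rho_{\kappa}|_{\Para}\right)\cong\rho_\kappa|_{\Levii}$ recalled in Section \ref{weights-subgroups}. The two obstacles you flag are genuine but routine: the integral validity of the branching decomposition is exactly the large-characteristic hypothesis already built into Remark \ref{divideweight} (and assumed in \cite{EiMa}), and the sub-versus-quotient question in part (3) is settled, just as you anticipate, by the twist ${}^tg^{-1}$ in Equation \eqref{uokap-defn}, which exchanges $\Para$ with its opposite parabolic, so that the $\Uni$-invariants $\rho_\kappa^{\Uni}\cong\varrho_\kappa$ \textemdash\ a \emph{sub}representation of $\rho_\kappa|_{\Para}$ \textemdash\ give rise to a \emph{quotient} of the associated sheaf, namely the projection $\varpi^\kappa$.
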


\subsection{Hecke algebras and Galois representations}\label{galois_sec}
We now recall the Galois representations (conjecturally) associated to mod $p$ Hecke eigenforms, following the setup of \cite[Section 2.1]{EFGMM}. 
We refer to \cite{Gross-satake, buzzgee14} for details.

Throughout this section, let $\randomfield$ be a field over which $G$ is split, i.e. every maximal torus of $G$ is isomorphic over $\randomfield$ to a product of copies of $\Gm$.  Suppose also that $\Torus$ and $\Borel$ are defined over $\randomfield$.  Although we have specified above that $G$ is of unitary or symplectic type, here we merely assume that $G$ is a connected and reductive group over $\IQ$.  As introduced in Section \ref{groups-intro}, we continue to denote by $\compact$ a level of $G$ that is neat and such that $\compact_p\subset G\left(\IQ_p\right)$ is hyperspecial.  Note that even 
with the relaxed assumptions on  $G$, $\Sigma_{\compact, p}$ is finite.  

\subsubsection{Local Hecke algebras}
Suppose that $v$ is a finite place of $\randomfield$ and that the completion $\randomfield_v$ of $\randomfield$ at $v$ is a nonarchimedean local field.  Denote by $\CO_v$ the ring of integers of $\randomfield_v$, $\varpi_v\in\CO_v$ a choice of uniformizer, and $q_v$ the cardinality of $\CO_v/\varpi_v\CO_v$.  Suppose that $G$ is split over $\CO_v$.  Then there exists a group scheme $\G$ over $\CO_v$
with generic fiber $G$ and reductive special fiber.  We denote by $G_v$ the group of points $\G\left(\randomfield_v\right)$.  We additionally choose $v$ so that $\compact_v$ is a hyperspecial maximal compact subgroup of $G_v$.

Given a commutative ring $R$, the {\it local Hecke algebra} of $\left(G_v, \compact_v\right)$ is the $R$-algebra
\begin{align*}
\Heckealgebra\left(G_v, \compact_v; R\right):=\left\{h: \compact_v\backslash G_v/\compact_v \rightarrow R \mid h \mbox{ is locally constant and compactly supported}\right\}
\end{align*}
with multiplication defined by
\begin{align*}
\left(h_1* h_2\right)\left(\compact_v g\compact_v\right):= \sum_{x\compact_v\in G_v/\compact_v}h_1\left(\compact_v x\compact_v\right)h_2\left(\compact_v x^{-1}g\compact_v\right).
\end{align*}
The ($\bmod p$) Satake transform is a ring isomorphism
\begin{align*}
\Satake_v:\Heckealgebra\left(G_v, \compact_v; \Fbar_p\right)\rightarrow R\left(\hat{G}\right)\otimes\Fbar_p,
\end{align*}
where 
$R\left(\hat{G}\right)$ is the representation ring of $\hat{G}$, the dual group of $G$.  

The characters $\omega$ of $R\left(\hat{G}\right)\otimes\Fbar_p$ are indexed by the semi-simple conjugacy classes  $s\in\hat{G}\left(\Fbar_p\right)$, via
\begin{align*}
s\leftrightarrow \left(\omega_s\left(\chi_\rho\right) := \chi_\rho(s)\right),
\end{align*}
where  $\chi_\rho:=\Tr\left(\rho\right)$, for $\rho$ any irreducible representation of $\hat{G}$.

\subsubsection{Galois representations associated to $\bmod p$ Hecke eigenforms}
Let $f$ be a mod $p$ Hecke eigenform on $G$ of level $\compact$ defined over $\Fbar_p$ (i.e. as introduced in Section \ref{autoforms-section}, a global section of the vector bundle over the associated Shimura variety).
Associated to the Hecke eigenform $f$, and a finite place $v$ of $L$ as above, we have a Hecke eigensystem
\begin{align*}
\Psi_{f, v}: \Heckealgebra\left(G_v, \compact_v; \Fbar_p\right)\rightarrow\Fbar_p
\end{align*}
defined by
\begin{align*}
Tf = \Psi_{f, v}(T) f
\end{align*}
for each $T\in\Heckealgebra\left(G_v, \compact_v; \Fbar_p\right)$. 
The {\it $v$-Satake parameter} of $f$ is the semi-simple conjugacy class $s_{f, v}\in \hat{G}\left(\Fbar_p\right)$ indexing the character $\omega_{f,v}$,\begin{align*}
\omega_{f,v}:=\Psi_{f, v}\circ \Satake_v^{-1}:R\left(\hat{G}\right)\otimes\Fbar_p\rightarrow\Fbar_p,
\end{align*}
i.e. $\omega_{s_{f, v}} = \omega_{f,v}$.

\begin{conj}[Positive characteristic form of Conjecture 5.17 of \cite{buzzgee14}]\label{gal-rep-conj}
There exists a continuous representation 
\begin{align*}
\rho: \Gal\left(\overline{\randomfield}/\randomfield\right)\rightarrow \hat{G}\left(\Fbar_p\right),
\end{align*}
unramified outside $\Sigma_{\compact, p}$, such that for all $v\nin\Sigma_{\compact, p}$, the image of the Frobenius element $\Frob_v$ at $v$ is $\rho\left(\Frob_v\right)=s_{f, v}.$
\end{conj}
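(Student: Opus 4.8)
This statement is a conjecture — the positive-characteristic incarnation of Buzzard--Gee's Conjecture 5.17 of \cite{buzzgee14} — and is not expected to be provable in the stated generality, so what follows is a plan for the cases in which it is accessible rather than an unconditional argument. The plan is to realize the Hecke eigensystem $\Psi_{f, v}$ geometrically and then transport it to a Galois-equivariant space, matching the prime-to-$\Sigma_{\compact, p}$ Hecke action with the Frobenius action and composing with the dual group via the Satake isomorphism.

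First I would relate the automorphic form $f$, which by Section \ref{autoforms-section} is a global section of the coherent sheaf $\uo^\kappa$ on $\Shp$, to a class in the (coherent, and after an Eichler--Shimura-type comparison, \'etale) cohomology of $\Shp$. The local Hecke algebra $\Heckealgebra(G_v, \compact_v; \Fbar_p)$ acts on this cohomology, and the eigensystem cut out by $f$ is the one for which we must produce a compatible representation into $\hat{G}(\Fbar_p)$. Second, on the \'etale cohomology of the Shimura variety one has a natural continuous action of $\Gal(\overline{\randomfield}/\randomfield)$ that commutes with the prime-to-$\Sigma_{\compact, p}$ Hecke operators and is unramified outside $\Sigma_{\compact, p}$ by smooth proper base change together with the hyperspecial (good-reduction) hypothesis on $\compact_v$. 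Decomposing this cohomology according to Hecke eigensystems and applying the Satake transform $\Satake_v$ then attaches, to the eigensystem of $f$, the sought continuous representation $\rho$.

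The crux — and the reason the statement is only a conjecture — is the local-global compatibility $\rho(\Frob_v) = s_{f, v}$ at every good place $v$. This is the content of the Eichler--Shimura / Langlands--Kottwitz congruence relation, refined so as to identify the semisimple conjugacy class of $\Frob_v$ acting on the relevant eigenspace with the $v$-Satake parameter $s_{f, v}$. The hard part will be twofold: first, attaching Galois representations to genuinely $\bmod p$ (torsion) coherent eigensystems, which either requires lifting $f$ to characteristic zero — not always possible — or a direct construction in the style of Scholze via the Hodge--Tate period morphism on perfectoid Shimura varieties; and second, proving the precise matching of Satake parameters with Frobenius conjugacy classes uniformly at all $v\notin\Sigma_{\compact, p}$, which demands control of the cohomological correspondence and of the underlying point-counting. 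In full generality both steps remain open, so the statement is recorded as a conjecture; in this paper it serves as the framework within which the effect of the theta operators $\Theta^\lambda$ and $\tth^\lambda$ on Satake parameters (and hence, conjecturally, on the associated $\rho$) can be analyzed in Section \ref{GalOne-section}.
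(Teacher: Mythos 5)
You are right that this statement is a conjecture, not a theorem: the paper offers no proof of it, stating it only as the positive-characteristic form of Conjecture 5.17 of \cite{buzzgee14} (with attribution and a pointer to \cite[Remark 2.1.2]{EFGMM} for comparison with the original), and then using it purely as a framework for interpreting the action of the theta operators on Hecke eigensystems in Section \ref{GalOne-section}. Your speculative outline (\'etale realization, Langlands--Kottwitz congruence relations, torsion Galois representations \`a la Scholze) is a reasonable account of why the statement is expected but open, and it is consistent with the paper's treatment, which likewise makes no attempt to prove it.
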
 
By \cite[Remark 5.19]{buzzgee14}, the set of Galois representations $\rho$ associated to $f$ as in Conjecture \ref{gal-rep-conj}  
is not necessarily finite.  
A comparison of the formulation in Conjecture \ref{gal-rep-conj} with the original statement from \cite{buzzgee14} is provided in \cite[Remark 2.1.2]{EFGMM}.  As an aside, we note that it might be possible to further describe the representations $\rho$ from Conjecture \ref{gal-rep-conj} (e.g. as odd); but since the present paper studies the effect of theta operators, such details would not impact our results.

\section{Some differential operators}\label{DiffOpsReview-section}

In this section, we recall the construction of weight-raising differential operators on $p$-adic automorphic forms, which arise as analogues of Maass--Shimura differential operators.  
By construction, these operators raise the weight of the automorphic forms by admissible weights. In Proposition \ref{prop-sym}, 
 we observe that, as it is the case for classical Maass--Shimura operators, {\bf the $p$-adic differential operators are non-trivial only for symmetric weights. 
Hence, going forward, we will only consider differential operators associated with symmetric weights.}
In Section \ref{frobsplit_sec}, we discuss some preliminary observations on new phenomena which arise in positive characteristic at primes when the ordinary locus is empty.

\subsection{Gauss--Manin connection and Kodaira--Spencer morphism}
Given a $\CO_{\realfield^\Gal}$-algebra $R$ in which $d_{\realfield}$ is invertible, a scheme $T=\Spec R$, a smooth morphism of schemes $Y\rightarrow T$, and a polarized abelian scheme $\cp: A\rightarrow Y$ together with an action of $\CO_{\cmfield}$ (e.g. when $A$ is an abelian variety parametrized by a unitary or symplectic Shimura variety), consider the Hodge filtration
\begin{align*}
0\rightarrow \uo_{A/Y}:=\cp_\ast\Omega^1_{A/Y}\hookrightarrow\hdrone\left(A/Y\right)\rightarrow R^1\cp_\ast\CO_A\rightarrow 0.
\end{align*}
As in \cite{kaCM, hasv, EDiffOps, EFGMM, EiMa}, we build differential operators from the Gauss--Manin connection\index{$\nabla$}
\begin{align*}
\nabla:=\nabla_{A/Y}:\hdrone\left(A/Y\right)\rightarrow\hdrone\left(A/Y\right)\otimes\Omega^1_{Y/T}
\end{align*}
and the Kodaira--Spencer morphism\index{$\KS$}
\begin{align*}
\KS:=\KS_{A/Y}:\uo_{A/Y}\otimes\uo_{A/Y}\rightarrow\Omega_{Y/T}^1.
\end{align*}
(Details on the Gauss--Manin connection and the Kodaira--Spencer morphism are available in, e.g., \cite[Sections 2.1.7 and 2.3.5]{la} and \cite[Section 9]{FaltingsChai}.)  

Via the product rule (i.e. Leibniz rule), for any nonnegative integer $k$, we extend the Gauss--Manin connection
to a morphism\index{$\nabla_{\otimes k}$}
\begin{align*}
\nabla_{\otimes k}: (\hdrone\left(A/Y\right))^{\otimes k}\rightarrow(\hdrone\left(A/Y\right))^{\otimes k}\otimes\Omega_{Y/T}^1
\end{align*}
by
\begin{align*}
\nabla_{\otimes k}(f_1\otimes\cdots\otimes f_k) = \sum_{i=1}^k \iota_i(f_1\otimes\cdots\otimes\nabla(f_i)\otimes\cdots\otimes f_k),
\end{align*}
where $\iota_i$ is the isomorphism
\begin{align*}
\iota_i: (\hdrone)^{\otimes i}\otimes\Omega_{Y/T}^1\otimes(\hdrone)^{\otimes (k-i)}&\isomto (\hdrone)^{\otimes k}\otimes\Omega_{Y/T}^1\\
e_1\otimes \cdots \otimes e_i\otimes u \otimes e_{i+1}\otimes\cdots\otimes e_d&\mapsto e_1\otimes\cdots\otimes e_d\otimes u.
\end{align*}
This map also naturally induces a morphism on symmetric powers, exterior powers, and their compositions, similarly marked in the subscript beneath $\nabla$.  When clarification about the specific power is not needed, we abuse notation and simply write $\nabla$ without the subscript.

By definition, 
\begin{align*}
\KS:=\langle\cdot,\nabla(\cdot)\rangle_A,
\end{align*}
 where $\langle \cdot, \cdot\rangle_A$ is the pairing induced by the polarization on $A$ and extended linearly in the second variable to a pairing between $\uo$ and $\nabla(\uo)$, exploiting the fact that $\uo$ is isotropic under this pairing. 
The Kodaira--Spencer morphism induces an isomorphism\index{$\ks$}
\begin{align}\label{ks-defn}
\ks: \uo^2\isomto\Omega_{Y/T}^1,
\end{align}
where the notation $\uo^2$ follows the convention of Equation \eqref{exp2-notation}.  

\subsection{Decompositions}
By abuse of notation, we also denote by $\nabla$ the map $(\id\otimes\ks^{-1})\circ\nabla.$
Recall that by Equation \eqref{exp2-notation}, $\uo^2$ decomposes as $\uo^2 = \oplus_{\tau\in\T_{\realfield}}\uo^2_\tau$.  Similarly to \cite[Section 2.1]{kaCM}, $\nabla$ also decomposes as a sum, over $\tau\in\T_{\realfield}$, of maps\index{$\nabla_\tau$}
\begin{align*}
\nabla_\tau: \hdrone(A/Y)\rightarrow \hdrone(A/Y)\otimes\uo_\tau^2,
\end{align*}
and similarly for its extension to tensor, symmetric, and exterior powers and their compositions, as well as Schur functors.  According to the decompositions of Section \ref{signature-intro}, $H:=\hdrone\left(A/Y\right)$ decomposes as 
\begin{align*}
H = \oplus_{\tau\in\T_{\realfield}}H_\tau,
\end{align*}
and if furthermore $\cmfield\neq\realfield$ and $R$ is a $\CO_{\cmfield^\Gal}$-algebra, each $H_\tau$ decomposes as
\begin{align*}
H_\tau = H_\tau^+\oplus H_\tau^-.
\end{align*}

In Lemma \ref{nablasigmatau} and Section \ref{defDalg_sec}, we briefly abuse notation and, for convenience when dealing with two elements of $\T$ at once, denote by $\sigma$ an element of $\T$.  Frobenius does not appear in these portions, so there should be no confusion with our use of $\sigma$ to denote Frobenius elsewhere.
\begin{lem}\label{nablasigmatau}
For any $\tau, \sigma\in\T_{\realfield}$, we have:
\begin{align*}
\nabla_\tau\left(H_\sigma\right)&\subseteq H_\sigma\otimes\uo^2_\tau\\
\nabla_\tau\left(H^{\pm}_\sigma\right)&\subseteq H^{\pm}_\sigma\otimes\uo^2_\tau
\end{align*}
\end{lem}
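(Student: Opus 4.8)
The plan is to verify that the Gauss--Manin connection $\nabla_\tau$ preserves the isotypic decompositions indexed by elements of $\T_{\realfield}$ and by the signs $\pm$, and that the only ``motion'' it introduces lands in the coefficient factor $\uo^2_\tau$. Everything follows from the fact that $\nabla$ is $\CO_\cmfield$-linear (equivalently, horizontal for the $\CO_\cmfield$-action), since the endomorphism action of $\CO_\cmfield$ on $A$ is defined over the base and hence parallel for the Gauss--Manin connection. The decompositions $H=\oplus_\sigma H_\sigma$ and $H_\sigma=H_\sigma^+\oplus H_\sigma^-$ are precisely the eigenspace decompositions for this $\CO_\cmfield$-action, as recalled in Section \ref{signature-intro}, so the claim is that $\nabla_\tau$ respects these eigenspaces.

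First I would recall that $\nabla\colon H\to H\otimes\Omega^1_{Y/T}$ commutes with the $\CO_\cmfield$-action, because $\CO_\cmfield$ acts via endomorphisms of $A/Y$ that are defined over $T$ and the Gauss--Manin connection is functorial in the abelian scheme; concretely, for $a\in\CO_\cmfield$ the induced map on $\hdrone$ is horizontal, so $\nabla\circ a = (a\otimes\id)\circ\nabla$. Next I would use this to show that if $x\in H_\sigma$, i.e. $a\cdot x = \sigma(a)x$ for all $a\in\CO_{\realfield}$ (and the analogous condition with $\tau|_{\realfield}=\sigma$ for the $\pm$ refinement using the full $\CO_\cmfield$-action), then each component of $\nabla(x)$ in the first tensor factor again lies in the $\sigma$-eigenspace. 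Applying $a\otimes\id$ to $\nabla(x)$ and comparing with $\nabla(a\cdot x)=\sigma(a)\nabla(x)$ forces the $H$-component to sit in $H_\sigma$; the same computation with $a\in\CO_\cmfield$ yields the statement for $H_\sigma^\pm$. This is the essential content: horizontality of the $\CO_\cmfield$-action intertwines the eigenspace decomposition through $\nabla$.

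Finally I would explain why the coefficient lands in $\uo^2_\tau$ rather than in the full $\Omega^1_{Y/T}$. After composing with $\id\otimes\ks^{-1}$ (the convention adopted just above the lemma), the target becomes $H\otimes\uo^2$, and by definition $\nabla_\tau$ is the component of this map landing in $H\otimes\uo^2_\tau$ under the decomposition $\uo^2=\oplus_{\tau\in\T_{\realfield}}\uo^2_\tau$ of Equation \eqref{exp2-notation}. So the $\uo^2_\tau$ membership of the coefficient is built into the definition of $\nabla_\tau$, and there is nothing further to prove there; the real work is confined to the preservation of the $H_\sigma$ and $H_\sigma^\pm$ summands, which I would package as the two displayed inclusions.

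I expect the only mild subtlety to be bookkeeping the two actions carefully: the decomposition $H=\oplus_\sigma H_\sigma$ is governed by the $\CO_{\realfield}$-action, whereas the finer splitting $H_\sigma=H_\sigma^+\oplus H_\sigma^-$ is governed by the $\CO_\cmfield$-action (in Case A, $\cmfield\neq\realfield$), so I would invoke horizontality of $\nabla$ for the larger ring $\CO_\cmfield$ once and deduce both statements simultaneously. The main conceptual point, and the only thing that could be called an obstacle, is establishing horizontality of the $\CO_\cmfield$-action for the Gauss--Manin connection; but this is a standard consequence of functoriality of $\nabla$ with respect to endomorphisms defined over the base, so I anticipate the proof to be short.
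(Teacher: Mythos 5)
Your proposal is correct and matches the paper's approach: the paper's own proof is a one-line appeal to the definition of $\nabla$ together with a citation to \cite[Equations (3.3) and (3.4)]{EDiffOps}, and the argument you spell out—horizontality of the $\CO_\cmfield$-action under the Gauss--Manin connection forces preservation of the eigenspace decompositions, while membership of the coefficient in $\uo^2_\tau$ is built into the definition of $\nabla_\tau$—is exactly the content behind that citation. You have simply made explicit the details the paper delegates to the reference, including the correct observation that $\sigma(a)$ is a constant from the base so no extra $d(\sigma(a))$ term appears.
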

\begin{proof}
This follows from the definition of $\nabla$, similarly to \cite[Equations (3.3) and (3.4)]{EDiffOps}.
\end{proof}
Note that the Leibniz rule (i.e. product rule) immediately extends Lemma \ref{nablasigmatau} to tensor, symmetric, and exterior powers, and their compositions, as well as Schur functors.

\begin{rmk}
Katz and Oda prove in \cite{KatzOda} that $\nabla$ is flat, i.e. integrable, when $T=\Spec k$ with $k$ a field.  In other words, 
\begin{align*}
\nabla_1\circ\nabla = 0,
\end{align*}
where
\begin{align*}
\nabla_1: \Omega_{Y/k}\otimes_{\CO_S}\hdrone(A/Y)\rightarrow\wedge^2\Omega_{Y/k}\otimes_{\CO_Y}\hdrone(A/Y)
\end{align*}
is defined by
\begin{align*}
\nabla_1(u\otimes e) = du\otimes e - u\wedge \nabla(e)
\end{align*}
for all $u\in\Omega_{Y/k}$ and $e\in \hdrone(A/Y)$, where $d$ denotes the exterior derivative on the de Rham complex.  (For convenience, we temporarily write $\Omega_{Y/k}$ on the left of $\hdrone(A/Y)$ here.)  In this case, $\hdrone(A/Y)$ has a horizontal basis for $\nabla$, i.e. a basis of sections on which $\nabla$ vanishes.
\end{rmk}

\subsection{Algebraic differential operators}\label{defDalg_sec}

The inclusion $\uo\hookrightarrow \hdrone(A/Y)$ (from the Hodge filtration) induces inclusions $\uo_\tau\hookrightarrow H_\tau$ and $\uo^{\pm}_\tau\hookrightarrow H_\tau^{\pm}$.  Thus, we get inclusions
\begin{align}
\iota_\tau: \uo_\tau^2\hookrightarrow H_\tau^2,
\end{align}
where\index{$H_\tau^2$}
\begin{align*}
H_\tau^2:=\begin{cases}\Sym^2 H_\tau & \mbox{ symplectic case}\\
H_\tau^+\boxtimes H_\tau^- & \mbox{ unitary case}
\end{cases}
\end{align*}

Similarly to \cite[Diagram (2.1.12)]{kaCM}, we now define an algebraic differential operator $D_\tau$ as the composition\index{$D_\tau$}
\begin{align*}
D_\tau:=\left(\id\otimes\iota_\tau\right)\circ \nabla_\tau.
\end{align*}
In particular, for each  weight $\kappa = \left(\kappa_\sigma\right)_\sigma$, we obtain a map
\begin{align*}
D_\tau:=D_{\kappa,\tau}: \uo^\kappa=\boxtimes_\sigma\uo_\sigma^{\kappa_\sigma}\rightarrow H^\kappa\otimes \uo_\tau^2\subseteq H^\kappa\otimes H_\tau^2\subseteq H^\kappa\otimes H^2,
\end{align*}
where $H^2=\oplus_{\tau\in\T_{\realfield}}H_\tau^2$ and $H^\kappa = \boxtimes_\sigma H_\sigma^{\kappa_\sigma}$ denotes the module formed from $H$ by taking the same composition of powers of tensor, exterior, and symmetric products used to form $\uo^\kappa$.
Now, we can compose the differential operators $D_\tau$.  

\begin{lem}\label{tau-commute}
The differential operators $D_\tau$ commute, i.e. $D_\tau D_\sigma=D_\sigma D_\tau$ for all $\tau, \sigma\in\T_{\realfield}$.
\end{lem}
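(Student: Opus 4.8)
The plan is to reduce the commutativity to the integrability (flatness) of the Gauss--Manin connection, with the compatibility recorded in Lemma \ref{nablasigmatau} controlling those terms in which a second derivative meets the Hodge filtration. First I would observe that it suffices to prove the identity at the level of $H=\hdrone(A/Y)$ and its tensor constructions: both $D_\tau$ and $D_\sigma$ are built from the single connection $\nabla$ on $H$, extended to tensor, symmetric, and exterior powers and to Schur functors by the Leibniz rule, and then post-composed with the fixed inclusions $\iota_\tau,\iota_\sigma$ and the fixed isomorphism $\ks^{-1}$. Since these post-composed maps do not depend on the order in which the two derivatives are taken, commutativity on $H$ propagates to $H^\kappa=\schur_\kappa(H)$ and hence restricts to the subsheaf $\uo^\kappa=\schur_\kappa(\uo)\subseteq H^\kappa$. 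The point to keep in mind is that in the composite $D_\tau D_\sigma$ the inner operator $D_\sigma$ produces a factor lying in $\iota_\sigma(\uo_\sigma^2)\subseteq H_\sigma^2$, and the outer $\nabla_\tau$ then differentiates this factor as well (by Leibniz); so the assertion is genuinely about the symmetry, in the two appended $\uo^2$-slots, of the iterated operator, and not merely about derivatives acting on disjoint factors.

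To organize the computation I would pass to a local horizontal frame $\{e_i\}$ of $H$, i.e. a basis with $\nabla e_i=0$, which exists because $\nabla$ is integrable by the theorem of Katz--Oda (\cite{KatzOda}, as recalled in the Remark preceding Section \ref{defDalg_sec}). In such a frame $\nabla$ becomes the exterior derivative on coefficient functions, and under $\ks^{-1}$ the operator $\nabla_\tau$ becomes the $\uo_\tau^2$-graded component of $d$. Writing a local section of $H^\kappa$ as $\sum_I f_I\, e_I$ in the induced (still horizontal) frame, the composite $D_\tau D_\sigma$ splits into a \emph{principal} part, in which both derivatives fall on the coefficient functions $f_I$, and a \emph{cross} part, in which the outer $\nabla_\tau$ differentiates the factor $\iota_\sigma(\uo_\sigma^2)$ created by $D_\sigma$ (whose expression in the horizontal frame of $H_\sigma^2$ is the position-dependent Hodge inclusion, hence not constant). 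The principal part is symmetric in $\tau$ and $\sigma$ precisely because mixed partial derivatives of the $f_I$ commute, which is exactly $d^2=0$ (the vanishing of the curvature). For the cross part, the input I would use is that, for $\tau\neq\sigma$, the $\tau$-derivative does not disturb the $\sigma$-Hodge filtration: combining Lemma \ref{nablasigmatau} (preservation of each $H_\sigma$ and each $H_\sigma^\pm$) with the isotropy of $\uo$ under the polarization pairing and the fact that $\ks$ respects the grading $\uo^2=\oplus_{\tau'\in\T_{\realfield}}\uo_{\tau'}^2$, one obtains that $\nabla_\tau$ preserves $\uo_\sigma\subseteq H_\sigma$, and indeed that $\uo_\sigma$ is $\nabla_\tau$-horizontal. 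Consequently the cross terms vanish for $\tau\neq\sigma$, leaving only the symmetric principal part; the case $\tau=\sigma$ is trivial.

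I expect the main obstacle to be precisely this cross part, and specifically the horizontality statement that the $\sigma$-Hodge filtration varies only in the $\sigma$-directions. Unlike the principal part, the cross part is not governed by integrability alone, because the Hodge inclusions $\iota_\tau\colon\uo_\tau^2\hookrightarrow H_\tau^2$ are not flat for $\nabla$; so one must show that the second covariant derivative interacts symmetrically with these inclusions. This is where Lemma \ref{nablasigmatau} is essential and, I anticipate, must be sharpened from "preservation of $H_\sigma$'' to "horizontality of $\uo_\sigma$ in the $\tau$-directions'' via the isotropy of $\uo$ and the diagonal nature of $\ks$ (that $\KS_\tau$ is supported on $\uo_\tau\otimes\uo_\tau$). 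Once the symmetry is established on $H$, the Leibniz rule and the functoriality of the construction under Schur functors transport it to $H^\kappa$ and hence to $\uo^\kappa$, completing the proof.
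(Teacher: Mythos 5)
The paper's own proof is a one-line reduction to the complex fiber, invoking Katz's Lemma (2.1.14) of \cite{kaCM}: one checks the identity after base change to $\IC$, where the symmetric-domain description makes it transparent. Your proposal is a genuinely different, purely algebraic route, and its key new input is correct: the sharpening of Lemma \ref{nablasigmatau} to the statement that $\nabla_\tau(\uo_\sigma)\subseteq \uo_\sigma\otimes\uo_\tau^2$ for $\sigma\neq\tau$ does follow from the $F$-equivariance of $\nabla$, the isotropy of $\uo$, the perfectness of the polarization pairing between the two conjugate pieces of $H_\sigma$, and the $\tau$-grading of $\ks$. However, the two symmetry claims you build on top of it both have gaps.

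First, horizontality does not make the cross terms vanish: it says that $\nabla_\tau$ applied to a section of $\uo_\sigma^2$ stays \emph{inside} $\uo_\sigma^2\otimes\uo_\tau^2$, not that it is zero. In your decomposition, the inner factor $\iota_\sigma\bigl((\ks^{-1}df_I)_\sigma\bigr)$ must be expanded in some frame of $\uo_\sigma^2$, and for a general frame the cross part (outer derivative hitting the frame sections) is nonzero. To kill it you need a frame of $\uo_\sigma^2$ that is horizontal for the partial connection in all non-$\sigma$ directions; the existence of such a frame is itself a step, requiring the involutivity of the distribution spanned by the non-$\sigma$ tangent directions and the flatness of the induced partial connection (both provable from Katz--Oda flatness plus your horizontality plus isotropy, but not consequences of horizontality alone). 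Second, even after the cross terms are removed, ``mixed partials commute, which is exactly $d^2=0$'' does not give symmetry of the principal part: the two derivatives are taken along the dual frame fields $X,Y$ of the non-closed coframe $\ks(\eta)$, and for non-coordinate frames one has $X(Yf)-Y(Xf)=[X,Y](f)$, which vanishes only if the relevant brackets do. (Also $d^2=0$ on functions is not ``vanishing of the curvature''; these are different statements.) The missing computation is a torsion/bracket argument: for $X$ dual to the $\tau$-block and $Y$ dual to the $\sigma$-block of partial-horizontal frames, one checks that every coframe component of $[X,Y]$ equals $\langle\nabla_Y x,\nabla_X y\rangle-\langle\nabla_X x,\nabla_Y y\rangle$ (using flatness to replace $\nabla_{[X,Y]}$ by $\nabla_X\nabla_Y-\nabla_Y\nabla_X$), and that each such term dies because one of its factors is a derivative, in a foreign direction, of a partially horizontal section. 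With those two repairs your argument does close up, so the strategy is viable; but as written the two central assertions are exactly the points that need proof, and they require the curvature/bracket bookkeeping that the paper avoids by reducing to $\IC$.
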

\begin{proof}
Similar to the proof of \cite[Lemma (2.1.14)]{kaCM}, which reduces the problem to working over $\IC$. 
\end{proof}

We also denote by \index{$D$}$D$ the sum of the differential operators $D_\tau$, i.e.
\begin{align*}
D := (\id\otimes\iota)\circ \nabla,
\end{align*}
where $\iota$ is the sum of the inclusions $\iota_\tau$, i.e. $\iota$ is the inclusion
\begin{align*}
\uo^2\hookrightarrow H^2
\end{align*}
induced by $\uo\hookrightarrow H$.  In analogue with the conventions above, this can also be extended, via the Leibniz rule (i.e. product rule), to a map on $\uo^\kappa$, which also denote by $D$ or \index{$D_\kappa$}$D_\kappa$.

\subsection{$p$-adic differential operators over the $\mu$-ordinary locus}\label{padicdiff_sec}
We recall the construction of $p$-adic differential operators over the $\mu$-ordinary locus $\shmuord$ over $\Witt$, from \cite[Sections 5 and 6]{EiMa}.

\subsubsection{A canonical complement to $\uo$ over the $\mu$-ordinary locus}
We recall the existence of a crucial submodule \index{$U$}$U$ of $\hdrone:=\hdrone\left(\Auniv/\shmuord\right)$. In the following, $\omega:=\omega_{\CA/\shmuord}$, and \index{${\rm Fr}^*$}${\rm Fr}^*$ denotes the Frobenius morphism acting on $\hdrone$.

\begin{prop}[Proposition 5.2.1 of \cite{EiMa}]\label{Usubmodule-section}
There exists a unique submodule $U$ of $\hdrone$ such that
\begin{enumerate}
\item{$U$ is $({\rm Fr}^*)^\be$-stable, where \index{$\be$}$\be =\lcm_{\mathfrak{o}\in\mathfrak{O}_F}  \left(\#\co\right)$.}  
\item{$U$ is $\nabla$-horizontal, i.e. $\nabla(U)\subseteq U\otimes\Omega_{\shmuord/\Witt}^1$.}\label{Prop2}
\item{$U$ is a complement to $\uo$, i.e. $\hdrone = \uo\oplus U$.}
\end{enumerate}
\end{prop}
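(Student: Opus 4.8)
The plan is to realize $U$ as a Frobenius-canonical complement to $\uo$, extending Katz's unit-root subspace (available when the ordinary locus is nonempty) to the setting of several slopes. First I would identify $\hdrone=\hdrone(\CA/\shmuord)$ with the evaluation on $\shmuord$ of the Dieudonn\'e crystal of $\CG:=\CA[p^\infty]$, so that it carries the crystalline Frobenius $\mathrm{Fr}^\ast$ together with $\nabla$, with $\mathrm{Fr}^\ast$ horizontal for $\nabla$ (the standard compatibility of Frobenius and connection on a crystal). This horizontality is what will ultimately force any Frobenius-canonical subspace to be $\nabla$-stable.

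The exponent $\be=\lcm_{\co\in\mathfrak{O}_F}(\#\co)$ in condition (1) serves to linearize the Frobenius. Since $\be$ is divisible by every orbit length, $\sigma^\be$ fixes each $\tau\in\T_F$, so $\Phi:=(\mathrm{Fr}^\ast)^\be$ preserves each $\tau$-component $H_\tau$ of $\hdrone$ rather than cycling through the orbit as $\mathrm{Fr}^\ast$ does. By complete slope divisibility over $\shmuord$ (from \cite{Man05}; recalled in \cite[Prop.~3.1.1]{EiMa}), $\CG$ has a slope filtration by sub-$p$-divisible groups, and this refines to a canonical slope decomposition of each $H_\tau$ into $\Phi$-eigenspaces indexed by the distinct slopes $a^\co_j$ of Definition \ref{muNP}. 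Because the slopes are distinct this decomposition is unique, and because each step of the slope filtration is a horizontal subcrystal (being the Dieudonn\'e module of a sub-$p$-divisible group) the decomposition is moreover $\nabla$-horizontal.

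The decisive input from the $\mu$-ordinary hypothesis is that the Hodge filtration is compatible with this slope decomposition: each $\uo_\tau$ is a sum of $\Phi$-eigenspaces inside $H_\tau$ (the Hodge--Newton compatibility characterizing the $\mu$-ordinary stratum, via Moonen's description \cite[Thm.~3.2.7]{moonen}). Granting this, I would define $U:=\oplus_\tau U_\tau$, with $U_\tau$ the sum of the \emph{complementary} slope-eigenspaces in $H_\tau$. Then (1) holds by construction; (3) holds because $\uo_\tau\oplus U_\tau=H_\tau$ for each $\tau$, so $\uo\oplus U=\hdrone$, the requisite rank count being governed by the signature $\cf$ and the slope data of Definition \ref{slopes}; and (2) holds because $U$ is a sum of the horizontal slope-eigenspaces from the previous step. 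Uniqueness of $U$ follows from the same slope-rigidity: any $(\mathrm{Fr}^\ast)^\be$-stable complement to $\uo$ must agree slope-by-slope with the canonical one.

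The hardest part will be establishing the Hodge--Newton compatibility used above --- that over the \emph{entire} $\mu$-ordinary locus, and \emph{integrally} over $\Witt$, the Hodge filtration $\uo$ is spanned by slope-eigenspaces of the linearized Frobenius. When only the slopes $0$ and $1$ occur (the ordinary case), $\uo$ is exactly the slope-$1$ part and $U$ the unit-root (slope-$0$) part, so transversality is automatic; with intermediate slopes one must use the precise $\mu$-ordinary relation between $\cf$ and the $a^\co_j$ to guarantee that $\uo_\tau$ meets each slope-eigenspace in the expected rank, and one must work with complete slope divisibility rather than slope divisibility up to isogeny in order to keep the decomposition defined over $\Witt$ instead of only after inverting $p$. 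This structural compatibility, rather than any ad hoc choice of complement, is what makes the three conditions hold simultaneously.
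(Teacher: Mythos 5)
Your opening moves---linearizing Frobenius via $(\mathrm{Fr}^\ast)^\be$ so that it preserves each $\tau$-component, and invoking complete slope divisibility over $\shmuord$ from \cite{Man05}---match the setup of the source of this statement, \cite[Proposition 5.2.1]{EiMa}. But the step on which your whole argument rests, namely that over $\shmuord$ the slope filtration ``refines to a canonical slope decomposition of each $H_\tau$ into $\Phi$-eigenspaces,'' horizontal for $\nabla$ and with $\uo_\tau$ equal to a subsum of eigenspaces, is false over the family, not merely hard to prove. Over a geometric point the $\mu$-ordinary Barsotti--Tate group is rigid and does decompose into its isoclinic pieces (\cite[Theorem 3.2.7]{moonen}), but over $\shmuord$ complete slope divisibility produces only a \emph{filtration}; its failure to split is precisely what the generalized Serre--Tate coordinates of the $\mu$-ordinary deformation space measure, and it splits only at special (CM) points. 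Concretely, your two claims together contradict the Kodaira--Spencer isomorphism wherever the proposition has content: if $H_\tau=\oplus_t E_t$ with every $E_t$ horizontal and $\uo_\tau$ a subsum of the $E_t$ (and similarly at $\tau^\ast$), then $\nabla(\uo_\tau)\subseteq\uo_\tau\otimes\Omega^1_{\shmuord/\Witt}$; since $\uo$ is isotropic for the polarization pairing, this forces the $\tau$-component of $\ks$ to vanish, contradicting $\ks:\uo^2\isomto\Omega^1_{\shmuord/\Witt}$ whenever $0\neq\uo_\tau\neq H_\tau$. Your uniqueness argument inherits the same flaw, since it quantifies over this nonexistent eigenspace decomposition.

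The actual construction is asymmetric in a way your proposal cannot accommodate: $U$ is not one half of a slope splitting but a canonical \emph{sub}-object of the slope filtration itself. On the crystal $\hdrone$ the slope filtration has the small slopes as sub-objects (in the ordinary case the slope-zero part is the unit-root subcrystal), while the $\mu$-ordinary structure forces each slope-graded piece of $\uo_\tau$ to be either zero or the full graded piece of $H_\tau$, the nonzero ones being exactly those of largest slope, with a strict jump in slope at the boundary. Hence at each $\tau$ the slopes not occurring in $\uo_\tau$ form an initial segment of the filtration, and $U_\tau$ is defined to be that filtration step, with $U:=\oplus_\tau U_\tau$. Stability under $(\mathrm{Fr}^\ast)^\be$ and $\nabla$-horizontality are then automatic, because every step of the slope filtration is a horizontal subcrystal stable under the powers of Frobenius that return to $\tau$; and $H_\tau=\uo_\tau\oplus U_\tau$ follows from graded transversality: $\uo_\tau\cap U_\tau$ has zero image in every graded piece and hence vanishes, while $\uo_\tau\to H_\tau/U_\tau$ is an isomorphism because it is one on graded pieces. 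The Hodge filtration is then a module-theoretic complement to $U$ that is neither horizontal nor Frobenius-stable---exactly as Kodaira--Spencer demands---whereas your construction required it to be both.
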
 
When the ordinary locus is nonempty, $U$ is the unit root submodule of $\hdrone$.

\subsubsection{Construction of $p$-adic differential operators}\label{defD_sec}
We now recall the construction of the $p$-adic differential operators over the $\mu$-ordinary locus from \cite[Section 6.2]{EiMa}.  Denote by $\pi_U$ the projection\index{$\pi_U$}
\begin{align*}
\pi_U: \hdrone\twoheadrightarrow \uo
\end{align*}
modulo the module $U$ from Proposition \ref{Usubmodule-section}.  This induces projections\index{$\pi_\tau$} 
\begin{align}
\pi_\tau:\left(\hdrone\right)_\tau\twoheadrightarrow\uo_\tau\label{pitau}\\
\pi_\tau^{\pm}:\left(\hdrone\right)^{\pm}_\tau\twoheadrightarrow\uo_\tau^{\pm}\nonumber
\end{align}
(in the notation of Section \ref{signature-intro}, $\mod U_\tau$ and $U_\tau^{\pm}$, respectively).
For each  weight $\kappa$, we define\index{$\diffop_\tau$}\index{$\diffop_{\kappa, \tau}$}
\begin{align*}
\diffop_\tau:=\diffop_{\kappa,\tau}:=\pi_U\circ D_\tau: \uo^\kappa\rightarrow \uo^\kappa\otimes \uo_\tau^2,
\end{align*}
where $\pi_U$
on $D_\tau(\omega^\kappa)$ is defined by applying $\pi_U$ to each factor $\hdrone$.  Property \eqref{Prop2} of Proposition \ref{Usubmodule-section} guarantees that
$\pi_U\circ \left(D_{\tau_1}\circ \cdots \circ D_{\tau_e}\right) = \diffop_{\tau_1}\circ \cdots \circ\diffop_{\tau_e}$
for any places $\tau_i\in\T_{\realfield}$. 
For each nonnegative integer $e$ and each $\tau\in\T_{\realfield}$, define an operator
\begin{align*}
\diffop_\tau^e :=\diffop_{\kappa, \tau}^e:= \underbrace{\diffop_{\kappa, \tau}\circ\cdots\circ\diffop_{\kappa, \tau}}_{e \mbox{ times}}.
\end{align*}
\begin{prop}\label{prop-sym}
For each nonnegative integer $e$ and each $\tau\in\T_{\realfield}$, the image of $\diffop_{\kappa, \tau}^e$ lies in $\uo^\kappa\otimes\Sym^e\uo_\tau^2$.
\end{prop}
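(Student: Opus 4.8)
The plan is to recognize $\diffop^e_{\kappa,\tau}$ as an iterated covariant derivative of a flat connection and to extract the symmetry from flatness of Gauss--Manin together with the symmetry of Kodaira--Spencer. First I would make explicit the reformulation behind the identity $\diffop^e_{\kappa,\tau}=\pi_U\circ D^e_\tau$ recorded above: because $U$ is $\nabla$-horizontal (Proposition \ref{Usubmodule-section}), one has $\pi_U\circ\nabla=\pi_U\circ\nabla\circ\pi_U$, so $\nabla$ descends to a connection $\overline\nabla$ on $\hdrone/U\cong\uo$. Under $\ks\colon\uo^2\isomto\Omega^1_{\shmuord/\Witt}$, the operator $\diffop_{\kappa,\tau}$ is exactly the $\tau$-component of the connection induced by $\overline\nabla$ on $\uo^\kappa$, and $\diffop^e_{\kappa,\tau}$ is its $e$-fold iterate, in which at each stage \emph{every} de Rham factor is differentiated, including the Kodaira--Spencer coefficients created earlier.

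Next I would reduce to $e=2$. By the inductive hypothesis, $\diffop^{e-1}_{\kappa,\tau}$ takes values in $\uo^\kappa\otimes\Sym^{e-1}\uo_\tau^2$, which is again an automorphic sheaf; applying $\diffop_\tau$ once more lands in $\uo^\kappa\otimes\Sym^{e-1}\uo_\tau^2\otimes\uo_\tau^2$, and it remains to see that the newly created factor symmetrizes with the existing symmetric block. Since the symmetric group is generated by adjacent transpositions, this follows once the two-fold statement is available for an arbitrary weight.

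For $e=2$ I would expand $\diffop^2_{\kappa,\tau}(m)$ by the Leibniz rule into the term differentiating the section $m$ twice and the term differentiating $m$ once and the Kodaira--Spencer coefficient once, and then antisymmetrize the two appended copies of $\uo_\tau^2$. The double-differentiation term contributes a commutator $[\overline\nabla_\alpha,\overline\nabla_\beta]$ applied to $m$; flatness of the Gauss--Manin connection \cite{KatzOda} (the same input used in Lemma \ref{tau-commute}) rewrites this through the Lie bracket of the corresponding vector fields. What then survives is exactly the torsion of the connection induced by $\overline\nabla$ and $\ks$ on $\Omega^1_{\shmuord/\Witt}$.

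The crux, and the step I expect to be the main obstacle, is therefore to show that this induced connection on $\Omega^1_{\shmuord/\Witt}$ is torsion-free, equivalently that antisymmetrizing it recovers the exterior derivative, so that the surviving term vanishes. I would deduce this from the symmetry of the Kodaira--Spencer morphism together with flatness of $\nabla$. The advantage of this route is that flatness and the symmetry of $\ks$ are properties of the algebraic data $\nabla$ and $\ks$ alone, insensitive to the $p$-adic complement $U$, and can be checked after base change along $\iota_\infty$ exactly as in the proof of Lemma \ref{tau-commute}; horizontality of $U$ (Proposition \ref{Usubmodule-section}) is what permits the $p$-adic projection $\pi_U$ to be folded compatibly into this computation.
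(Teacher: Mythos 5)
Your reduction to $e=2$ and the reformulation of $\diffop_{\kappa,\tau}$ as a covariant derivative for the quotient connection $\overline{\nabla}=\pi_U\circ\nabla$ on $\hdrone/U\cong\uo$ are sound (in particular, flatness of $\overline{\nabla}$ does follow from \cite{KatzOda} together with horizontality of $U$). The gap is at exactly the step you call the crux, and the justification you propose for it cannot work. The torsion of the connection induced on $\Omega^1_{\shmuord/\Witt}$ is \emph{not} determined by the algebraic data $\nabla$ and $\ks$ alone: it depends on the splitting through $\pi_U$. Concretely (say in case C), for $\alpha=\ks(u\cdot v)$ with $u,v$ sections of $\uo_\tau$, flatness of $\nabla$ and its compatibility with the polarization pairing give $d\alpha(X,Y)=\langle\nabla_X u,\nabla_Y v\rangle-\langle\nabla_Y u,\nabla_X v\rangle$, and comparing this with the antisymmetrization of the induced covariant derivative of $\alpha$, all terms involving one $\uo$-component and one $U$-component cancel, leaving
\begin{equation*}
d\alpha(X,Y)-\mathrm{alt}(D\alpha)(X,Y)=\bigl\langle(\nabla_X u)_U,(\nabla_Y v)_U\bigr\rangle-\bigl\langle(\nabla_Y u)_U,(\nabla_X v)_U\bigr\rangle,
\end{equation*}
where $(\cdot)_U$ denotes the $U$-component. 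Horizontality of $U$ does nothing to kill this term; what kills it is the vanishing of the polarization pairing on $U\times U$ (isotropy of $U$, and its analogue pairing $U_\tau$ against $U_{\tau^\ast}$ in case A). That is an additional property of $U$, not among the three listed in Proposition \ref{Usubmodule-section}, which your argument would need to establish separately (it is true, but one has to extract it from the slope description of $U$ in \cite{EiMa}, e.g. by a Frobenius-divisibility or slope-pairing argument). So your claim that the only input from the $p$-adic complement is horizontality, everything else being ``insensitive to $U$,'' is incorrect.

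For the same reason, the proposed verification ``after base change along $\iota_\infty$'' fails precisely at this step: $U$ is a formal, purely $p$-adic object over $\shmuord$, characterized by Frobenius-stability, and has no base change to $\IC$. What exists over $\IC$ is the $C^\infty$ Hodge splitting $H^{0,1}$, which is a \emph{different}, non-horizontal complement; the symmetry of the operators built from it is Shimura's theorem, proved via K\"ahler geometry in \cite[Sections 13.1--13.8]{shar}, not a formal consequence of flatness and $\ks$-symmetry. The two splittings can be compared only at CM points, up to periods\textemdash and that comparison is exactly the paper's proof of Proposition \ref{prop-sym}: by Serre--Tate theory (\cite{ShZh}) $\mu$-ordinary CM points admit canonical lifts and are dense, so it suffices to check the statement at such points; there the $p$-adic and $C^\infty$ Maass--Shimura operators agree up to periods by Katz's argument \cite{kaCM}; Shimura's result over $\IC$ then concludes. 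To complete your route instead, you would have to prove the isotropy of $U$ and run the torsion computation $p$-adically; this is viable and is essentially the ``direct'' proof mentioned in the remark following Proposition \ref{prop-sym}, carried out via Serre--Tate coordinates in \cite[Remark 5.2.5]{EFMV} and \cite[Proposition 6.2.5]{EiMa}.
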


Proposition \ref{prop-sym} on the image of $p$-adic differential operators seems to be accepted in the field but not justified anywhere in the literature.  
As  Proposition \ref{prop-sym} 
is not an immediate consequence of the definition of $\diffop_\tau^e$, we briefly justify it below.  Note that this statement is significant, because it introduces constraints on the amounts by which the differential operators $\diffop_\tau^e$ can raise weights, which are independent of additional constraints that will be forced when the ordinary locus is empty. 

\begin{proof}[Proof of Proposition \ref{prop-sym}]

Note that it is sufficient to prove the statement for sections of $\uo$ defined over a ring $R$ that is dense in the
base 
ring 
$\Witt$  
over which $\shmuord$ is defined.
Going forward, we  take 
$R=\bar{\mathbb{Q}}\cap\Witt$. 
Then it suffices 
(by the density of $R$ in $\Witt$) 
to prove $\diffop^e_\tau(f)\in \uo^\kappa\otimes\Sym^e\uo_\tau^2$ for each global section $f\in H^0\left(\Sh_{/R}, \uo^\kappa\right)$.

By Serre--Tate theory ( \cite[Theorem 6.5]{ShZh}, also \cite[Proposition 2.3.12 (i)]{moonen} and originally due to Serre and Tate in the ordinary case), any $\mu$-ordinary point defined over a finite field $ x\in \shpmuord({\mathbb{ F}})$ admits a (canonical) CM lift  $\tilde{x}\in \Sh(\Witt)$.  Furthermore,   CM points are dense in  the formal neighborhood of $\Sh$ at $x$ (\cite[Theorem 1.1]{ShZh}). 
Hence, it suffices to prove the statement holds locally at each $\mu$-ordinary CM point defined over $R$ (which can, by extending scalars, be viewed as a CM point over $\Witt$).

Fixing an embedding $R\hookrightarrow\IC$ and extending scalars, we may view each automorphic form defined over $R$ as an automorphic form over $\IC$.
The differential operators $\diffop_{\kappa, \tau}^e$ are $p$-adic analogues of the $\ci$ Maass--Shimura operators $\shimuraop_{\rho_\kappa, \tau}^e$\index{$\shimuraop_{\rho_\kappa, \tau}^e$} defined in, e.g., \cite[Section 12.9]{shar}.  More precisely, the $\ci$ Maass--Shimura operators $\shimuraop_{\rho_\kappa, \tau}^e$ can be constructed algebro-geometrically over $\IC$ similarly to the $p$-adic operators $\diffop_{\kappa, \tau}^e$ by replacing $\shmuord$ with $\Sh(\IC)$ and replacing the complement $U$ of $\uo\subset\hdrone$ from the $p$-adic setting with the anti-holomorphic forms $H^{0, 1}\subseteq \hdrone$ in the $\ci$-setting over $\IC$, as in \cite[Chapter II]{kaCM}, \cite[Section 4]{hasv}, \cite[Section 8]{EDiffOps}, and \cite[Section 3.3.1]{EFMV}.  

By the approach first introduced by Katz in \cite[Sections 2.6 and 5.1]{kaCM}, for each form $f$ defined over $R$, the values of $\diffop_{\kappa, \tau}^e(f)$ and $\shimuraop_{\rho_\kappa, \tau}^e(f)$ agree (up to periods) at each $\mu$-ordinary CM point defined over $R$.
 
Thus, it is sufficient to prove the image of the $\ci$-differential operator $\shimuraop_{\rho_\kappa, \tau}^e$ lies in the symmetric product, which Shimura 
accomplished in \cite[Sections 13.1 through 13.8]{shar}.
\end{proof}

\begin{rmk}
One could also  prove Proposition \ref{prop-sym} directly over a $p$-adic ring, by proving $p$-adic analogues of the results on $\ci$-vector fields and complex K\"ahler manifolds Shimura employs in \cite[Section 13]{shar} to prove the image of his operators lies in the symmetric product.  When the ordinary locus is nonempty, this strategy is carried out in  \cite[Remark 5.2.5]{EFMV} via explicit computations of Serre--Tate expansions. When the ordinary locus is empty, analogous computations on Serre--Tate expansion still hold by \cite[Proposition 6.2.5]{EiMa}. 
We know no benefit, however, to carrying out this tedious exercise, since it requires a longer proof and ultimately results in the same conclusion.  Further, we note that there is a well-established benefit to exploiting $\IC$ to prove statements not over $\IC$, e.g. in \cite{kaCM}, as recalled in the proof of Lemma \ref{tau-commute} above.
\end{rmk}

Now, for any symmetric weight $\lambda$ of $\Levi_\tau$, admissible of depth $e$ (at $\tau$), we define\index{$\diffop_\tau^\lambda$}
\begin{align}\label{Dtldefn}
\diffop_\tau^\lambda:=\diffop_{\kappa, \tau}^\lambda:={\rm pr}_{\kappa, \lambda}\circ\left(\id\otimes {\rm pr}_\lambda\right)\circ \diffop_\tau^e: \uo^\kappa\rightarrow\uo ^{\kappa+\lambda},
\end{align}
where ${\rm pr}_\lambda$ denotes projection onto the automorphic sheaf of weight $\lambda$ (inside $\Sym^e\uo_\tau^2$), and ${\rm pr}_{\kappa, \lambda}$ denotes the canonical projection $\uo^\kappa\otimes\uo^\lambda\twoheadrightarrow\uo^{\kappa+\lambda}$ (see Section \ref{weights_sec}).

\subsubsection{Differential operators on OMOL sheaves}\label{padicdiffop}
In \cite[Section 6.3]{EiMa}, we observe that, for any symmetric weight $\lambda$ of $\Levi$,  the $p$-adic Maass--Shimura operators $\CD^\lambda$ on the restriction to $\shmuord$ of automorphic sheaves preserve\index{$\underline{\CD}^\lambda$} 
the standard filtrations, hence inducing operators 
\[\underline{\CD}^\lambda:= ({\rm id}\otimes \varpi^\lambda)\circ\gr(\CD^\lambda):\gr(\uo^\kappa)\to\gr(\uo^\kappa)\otimes \uo^\lambda\to \gr(\uo^\kappa)\otimes \underline{\uo}^\lambda.\]
Via the isomorphisms $\iota_\kappa$, 
the operators
$\underline{\CD}^\lambda$ induce differential operators on OMOL sheaves
\[ \underline{\uo}^{\kappa'}\to \underline{\uo}^{\kappa'}\otimes \underline{\uo}^\lambda\to\underline{\uo}^{\kappa'+\lambda}.\]
For any  weight $\kappa'$ of $\Levii$, by abuse of notation, we still denote them as\index{$\underline{\CD}^\lambda_{\kappa'}$} 
\begin{align*}
\underline{\CD}^\lambda:=\underline{\CD}^\lambda_{\kappa'}:\underline{\uo}^{\kappa'}\to\underline{\uo}^{\kappa'+\lambda}.
\end{align*}

 \newcommand{\modp}{{$\bmod \,p$ }}

\subsection{A first look at differential operators modulo $p$: the OMOL setting}\label{frobsplit_sec}
In Sections \ref{AC-section} and \ref{modpdiff_sec}, we study the $\mod p$ reduction of $p$-adic differential operators on automorphic forms and on OMOL sheaves, respectively, and we analytically continue them beyond the $\mu$-ordinary locus to the entire Shimura variety.
Both results are achieved only under certain restrictions on the weights.  For those weights which satisfy all the assumptions, we compare the two constructions in Proposition \ref{compare_prop}.

Right now, without imposing any restrictions on weights, 
we conclude our introduction to behavior over the $\mu$-ordinary locus by
explaining the relationship between the \modp reductions of the OMOL sheaves $\underline{\uo}^{\kappa'}$ and differential operators $\underline{\CD}^\lambda$ (introduced in Sections \ref{omol-bkgd} and \ref{padicdiffop}, respectively) and the \modp automorphic sheaves $\uo^{\kappa'}$ and differential operators $\CD^\lambda$ studied above.

For  any $\co\in\mathfrak{O}_F$, and $e_\co=\#\co$, write $\CG_\co^\pe:=({\rm Fr}^{e_\co})^*\CA[{\mathfrak p}_\co]$. 
Similarly, for $\be=\lcm_{\co\in\mathfrak{O}}(e_\co)$, write $\CA^{(p^\be)}[p]:=({\rm Fr}^\be)^*\CA[p]$ over $\Shp$. 
By \cite[Lemma 8]{Man05},  over $\shpmuord$, the  filtration of $\CA^{(p^\be)}[p]$  (resp. $\CG_\co^{(p^e)}[p]$, for all $\co\subseteq \CT_F$) induced by the slope filtration is canonically split. 
More precisely, we have the following result.

\begin{lem}( \cite[Lemma 8]{Man05})\label{split_lemma}
Maintaining the above notation, over $\shpmuord$,
there are (compatible) canonical  isomorphisms  of $\mathfrak{D}$-enriched truncated \BT groups of level 1 over $\shpmuord$ \[\gr(\CA)^{(p^e)}[p]\simeq \CA^{(p^e)}[p],\text{ and } \gr(\CG_\co)^{(p^e)}[p]\simeq \CG_\co^{(p^e)}[p], \text{ for all }\co\in\mathfrak{O}_F.\]
Hence, in particular, there are (compatible) canonical isomorphism of $\CO_\shpmuord \otimes_\Witt\CO_F$-modules \[\gr(\uo)^{(p^e)}\simeq\uo^{(p^e)},
 \text{ and } \gr(\uo_\co)^{(p^e)}\simeq \uo_\co^{(p^e)}, \text{ for all }\co\in\mathfrak{O}_F.\]

\end{lem}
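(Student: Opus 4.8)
The plan is to deduce the lemma from the complete slope divisibility of $\CA[p^\infty]$ (and of each $\CG_\co$) over $\shpmuord$ recorded above following \cite[\S 3]{Man05} and \cite[Proposition 3.1.1]{EiMa}; indeed, the statement is \cite[Lemma 8]{Man05}, and I would reprove it in the present notation as follows. Write $G$ for either $\CA[p^\infty]$ or $\CG_\co$, let $0=G_0\subset G_1\subset\cdots\subset G_r=G$ be its slope filtration over $\shpmuord$ with associated graded pieces $\gr_i:=G_i/G_{i-1}$ isoclinic of slopes $\lambda_1>\cdots>\lambda_r$, and let $e$ be the relevant integer ($e=\be$, resp. $e=e_\co$), chosen so that each $s_i:=e\lambda_i$ is an integer with $e\geq s_1>\cdots>s_r\geq 0$. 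Complete slope divisibility means precisely that the $e$-fold relative Frobenius $F^e\colon G\to G^{(p^e)}$ satisfies $F^e(G_i)\subseteq p^{s_i}G_i^{(p^e)}$, so that $p^{-s_i}F^e\colon G_i\to G_i^{(p^e)}$ is a well-defined isogeny restricting to an isomorphism $\phi_i\colon\gr_i\isomto\gr_i^{(p^e)}$ on each graded piece.

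First I would use these divided Frobenii to split the filtration after twisting and truncating. Pulling back the (canonical, functorial) slope filtration along $F^e$ gives the slope filtration $0\subset G_1^{(p^e)}\subset\cdots\subset G^{(p^e)}$ with graded pieces $\gr_i^{(p^e)}$, so it suffices to produce canonical lifts of these graded pieces into $G^{(p^e)}[p]$. The key observation is that $p^{-s_i}F^e$ is \emph{diagonal modulo $p$}: on the higher-slope pieces $\gr_j$ with $j<i$ (so $s_j>s_i$) its restriction equals $p^{s_j-s_i}\phi_j$, which is divisible by $p^{s_j-s_i}\geq p$ and hence vanishes on $[p]$-torsion, while on $\gr_i$ it is the isomorphism $\phi_i$. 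Consequently $p^{-s_i}F^e\bmod p$ produces a canonical subgroup of $G_i^{(p^e)}[p]$ isomorphic to $\gr_i^{(p^e)}[p]$ and complementary to $G_{i-1}^{(p^e)}[p]$; assembling these for all $i$ splits the filtration, yielding the canonical isomorphism $\gr(G)^{(p^e)}[p]\simeq G^{(p^e)}[p]$ of truncated \BT groups of level $1$. Canonicity, and the compatibilities asserted in the statement, follow because both the slope filtration and the $\phi_i$ are canonical; the compatibility across $\co$ reduces to the canonical decomposition $\CA[p^\infty]=\oplus_{\co\in\mathfrak{O}_F}\CG_\co$, using $e_\co\mid\be$ so that the respective Frobenius twists are compatible.

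Finally I would obtain the statements for $\uo$ and $\uo_\co$ by applying the invariant-differentials functor $H\mapsto\omega_H$ to the splitting above. This functor is additive, carries the filtration induced on $\uo=\omega_{\CA[p]}$ by the slope filtration to $\uo_\bullet$ (so that $\gr(\uo)=\omega_{\gr(\CA)[p]}$), and commutes with the Frobenius twist, i.e. $\omega_{H^{(p^e)}}=\omega_H^{(p^e)}$; applying it to $\gr(\CA)^{(p^e)}[p]\simeq\CA^{(p^e)}[p]$ therefore gives $\gr(\uo)^{(p^e)}\simeq\uo^{(p^e)}$, and likewise $\gr(\uo_\co)^{(p^e)}\simeq\uo_\co^{(p^e)}$ for each $\co$.

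The main obstacle is the splitting step in the second paragraph: verifying rigorously that the slope gaps $s_j-s_i$ force the off-diagonal contributions to vanish on $[p]$-torsion after the twist, and that the resulting subgroups are genuine complements, in a way that is canonical and holds in families over the imperfect base $\shpmuord$ rather than merely over geometric points. Carrying this out requires the explicit description of completely slope divisible groups via their Dieudonn\'e crystals (or Zink displays), which is exactly the content of \cite[Lemma 8]{Man05}; granting it, the remaining steps are formal consequences of the functoriality of the slope filtration and of $\omega$.
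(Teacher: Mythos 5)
Your proposal is correct and takes essentially the same route as the paper: the paper offers no independent argument for this lemma but simply invokes \cite[Lemma 8]{Man05} (complete slope divisibility over $\shpmuord$ and the divided Frobenii $p^{-s_i}F^e$ splitting the Frobenius-twisted $p$-torsion), and then passes to $\uo$ and $\uo_\co$ by functoriality of invariant differentials, exactly as you do in your final paragraph. Your middle paragraph is in substance the proof of the cited lemma itself (the divided Frobenius being diagonal modulo $p$ and hence furnishing canonical complements), so the only difference is that you unwind the reference where the paper leaves it as a citation.
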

Combined with Proposition \ref{filtration_prop}, the above lemma implies the following result.

\begin{prop}\label{split_prop}
For any  weight $\kappa$ of $\Levi$, we have a canonical isomorphism of  $\CO_\shpmuord \otimes_\Witt\CO_F$-modules 
\[(\omega^\kappa)^\pee\simeq \bigoplus_{\kappa'\in\mathfrak{M}_\kappa }(\underline{\uo}^{\kappa'})^\pee.\]
\end{prop}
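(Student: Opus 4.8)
The plan is to deduce the statement by combining the graded decomposition of Proposition \ref{filtration_prop} with the Frobenius splitting of Lemma \ref{split_lemma}, transported through the Schur functor $\schur_\kappa$. Recall that $\uo^\kappa=\schur_\kappa(\uo)$ and $\underline{\uo}^{\kappa'}=\gr(\uo)^{\kappa'}$, and that Proposition \ref{filtration_prop} furnishes, over $\shpmuord$, a canonical identification $\iota_\kappa\colon\gr(\uo^\kappa)\simeq\bigoplus_{\kappa'\in\mathfrak{M}_\kappa}\underline{\uo}^{\kappa'}$; the branching $\rho_\kappa\vert_{\Levii}=\oplus_{\kappa'}\varrho_{\kappa'}$ identifies the right-hand side with $\schur_\kappa(\gr(\uo))$, so that taking the associated graded of the standard $\Uni$-stable filtration on $\uo^\kappa$ amounts to applying $\schur_\kappa$ to $\gr(\uo)$.

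First I would record that Lemma \ref{split_lemma} gives a canonical isomorphism $\uo^\pee\simeq\gr(\uo)^\pee$ of $\CO_\shpmuord\otimes_\Witt\CO_F$-modules over $\shpmuord$; that is, the slope filtration of $\uo$ splits canonically after pulling back by $({\rm Fr}^\be)^*$. I would then apply $\schur_\kappa$ and run the chain of canonical isomorphisms
\begin{align*}
(\uo^\kappa)^\pee=\schur_\kappa(\uo)^\pee
&\simeq\schur_\kappa\left(\uo^\pee\right)
\simeq\schur_\kappa\left(\gr(\uo)^\pee\right)\\
&\simeq\schur_\kappa(\gr(\uo))^\pee
\simeq\bigoplus_{\kappa'\in\mathfrak{M}_\kappa}(\underline{\uo}^{\kappa'})^\pee,
\end{align*}
where the first and third isomorphisms use that $\schur_\kappa$ commutes with the Frobenius pullback, the second is obtained by applying $\schur_\kappa$ to the Lemma \ref{split_lemma} isomorphism, and the last combines Proposition \ref{filtration_prop} with the fact that $({\rm Fr}^\be)^*$ commutes with finite direct sums. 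Composing these yields the asserted canonical isomorphism.

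The hard part will be justifying that $\schur_\kappa$ commutes with the Frobenius pullback $({\rm Fr}^\be)^*$ in characteristic $p$. Since $\shpmuord$ is smooth over $\F$, it is regular, so by Kunz's theorem the absolute Frobenius is flat; hence $({\rm Fr}^\be)^*$ is exact and commutes with formation of the image of the integrally-defined Young-symmetrizer map out of which $\schur_\kappa$ is built, giving $\schur_\kappa(\mathcal{F}^\pee)\simeq\schur_\kappa(\mathcal{F})^\pee$ for locally free $\mathcal{F}$. The remaining point is essentially formal: one must check that the splitting produced by applying $\schur_\kappa$ to the Lemma \ref{split_lemma} isomorphism is compatible with the canonical identification $\iota_\kappa$, so that the composite is canonical and genuinely decomposes $(\uo^\kappa)^\pee$ along $\mathfrak{M}_\kappa$. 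This last compatibility reduces to the functoriality of the standard $\Uni$-stable filtration on $\uo^\kappa$ with respect to the slope filtration on $\uo$, already underlying Proposition \ref{filtration_prop}.
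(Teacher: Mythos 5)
Your proof is correct and takes essentially the same approach as the paper: the paper deduces this proposition in a single line by combining Lemma \ref{split_lemma} with Proposition \ref{filtration_prop}, which is exactly the chain of identifications you spell out via the Schur functor. Your additional care in justifying that $\schur_\kappa$ commutes with the Frobenius pullback (via flatness of Frobenius on the regular scheme $\shpmuord$) fills in a detail the paper leaves implicit.
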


 At every geometric point of $\shpmuord$, the above isomorphisms agree with those constructed in \cite[Proposition  4.3.3]{EiMa}.
Hence, by combining the above result with \cite[Propositions 6.2.3 and 6.2.5]{EiMa}, we deduce Proposition \ref{split_prop}.

\begin{prop}\label{split_prop}
For any  weight $\kappa$, and any symmetric weight $\lambda$  of $\Levi$, under the identification of  $\CO_\shpmuord \otimes_\Witt\CO_F$-modules 
$(\omega^\kappa)^\pee\simeq \bigoplus_{\kappa'\in\mathfrak{M}_\kappa }(\underline{\uo}^{\kappa'})^\pee$
we have    
\[({{\CD}_\kappa^\lambda})^\pee=\bigoplus_{\kappa',\lambda'}(\underline{\CD}_{\kappa'}^{\lambda'})^\pee,\]
where $\kappa'\in\mathfrak{M}_\kappa,\lambda'\in\mathfrak{M}_{\lambda}$ satisfy $\kappa'+\lambda'\in\mathfrak{M}_{\kappa+\lambda}$.
\end{prop}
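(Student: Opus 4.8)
The plan is to deduce the identity from the very definition of the OMOL operators together with the splitting of the slope filtration afforded by Lemma \ref{split_lemma}, reducing the equality of differential operators to a pointwise computation at the $\mu$-ordinary points already carried out in \cite{EiMa}.

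First I would recall that, by construction (Section \ref{padicdiffop}, following \cite[Section 6.3]{EiMa}), the $p$-adic Maass--Shimura operator $\CD_\kappa^\lambda$ preserves the standard $\Uni$-stable filtrations on $\uo^\kappa$ and $\uo^{\kappa+\lambda}$, and the operators $\underline{\CD}_{\kappa'}^{\lambda'}$ are precisely the components of its associated graded $\gr(\CD_\kappa^\lambda)$, obtained after composing with the projection $\varpi^\lambda$ and transporting through the canonical identification $\iota_\kappa\colon\gr(\uo^\kappa)\simeq\bigoplus_{\kappa'\in\mathfrak{M}_\kappa}\underline{\uo}^{\kappa'}$ of Proposition \ref{filtration_prop}. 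In other words, both sides of the asserted equality are built from the single operator $\CD_\kappa^\lambda$: the left-hand side is $\CD_\kappa^\lambda$ itself (after Frobenius pullback), while the right-hand side is its associated graded.

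Next I would pass to the Frobenius pullback and invoke that, over $\shpmuord$, the filtration of $\CA^{(p^\be)}[p]$ is canonically split (Lemma \ref{split_lemma}); this is the input that produces the identifications $(\uo^\kappa)^\pee\simeq\bigoplus_{\kappa'\in\mathfrak{M}_\kappa}(\underline{\uo}^{\kappa'})^\pee$ of the preceding proposition, and likewise for the target weight $\kappa+\lambda$. The decisive point, absent before Frobenius pullback, is that once the filtration splits, passing to the associated graded loses no information: the claim $(\CD_\kappa^\lambda)^\pee=\bigoplus_{\kappa',\lambda'}(\underline{\CD}_{\kappa'}^{\lambda'})^\pee$ is equivalent to saying that $(\CD_\kappa^\lambda)^\pee$ coincides with its own associated graded under the splitting, i.e. that its spurious strictly-filtration-increasing components vanish. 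Granting this, the $(\kappa',\mu')$-component of $(\CD_\kappa^\lambda)^\pee$, for $\kappa'\in\mathfrak{M}_\kappa$ and $\mu'\in\mathfrak{M}_{\kappa+\lambda}$, is nonzero exactly when $\mu'=\kappa'+\lambda'$ for a weight $\lambda'\in\mathfrak{M}_\lambda$ occurring in the branching --- the bookkeeping forced by $\varpi^\lambda$ and the projection ${\rm pr}_{\kappa,\lambda}$ --- and then equals $(\underline{\CD}_{\kappa'}^{\lambda'})^\pee$, which accounts for the index set $\kappa'+\lambda'\in\mathfrak{M}_{\kappa+\lambda}$.

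The main obstacle is the vanishing of those spurious components, which is not a formal consequence of filtration-preservation alone. To establish it I would reduce the equality of operators to a check at each geometric point of $\shpmuord$ --- legitimate because the sheaves involved are locally free, the splitting of Lemma \ref{split_lemma} is canonical, and these isomorphisms agree pointwise with those of \cite[Proposition 4.3.3]{EiMa} --- and then at each $\mu$-ordinary point invoke the explicit canonical-lift and Serre--Tate computations of \cite[Propositions 6.2.3 and 6.2.5]{EiMa}, which express $\CD_\kappa^\lambda$ in coordinates adapted to the split filtration and show directly that its Frobenius pullback carries no strictly-filtration-increasing part. I expect this pointwise analysis --- showing that pulling back by ${\rm Fr}^\be$ exactly annihilates the connecting terms of the filtration --- to be where the genuine content lies, with the surrounding reduction and the weight bookkeeping being formal once it is in place.
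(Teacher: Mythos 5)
Your proposal is correct and follows essentially the same route as the paper: the canonical splitting of Lemma \ref{split_lemma} gives the sheaf identification after Frobenius pullback, and the operator identity is then reduced to a check at geometric points of $\shpmuord$, where the identifications agree with \cite[Proposition 4.3.3]{EiMa} and the Serre--Tate computations of \cite[Propositions 6.2.3 and 6.2.5]{EiMa} show that no strictly filtration-increasing components survive. Your write-up merely makes explicit the vanishing of these ``spurious'' components, which the paper's two-sentence proof leaves implicit.
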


For $\lambda$ any  symmetric weight $\lambda$ of $\Levi$, the $p$-adic differential operators $\underline{\CD}^\lambda$ on OMOL sheaves over $\shmuord$ were constructed in \cite[Section 6.3]{EiMa} (as recalled in Section \ref{padicdiffop} of the present paper).
By Proposition \ref{split_prop}, the same definition yields \modp differential operators on the pullback by ${{\rm Fr}^*}^\pee$ of OMOL sheaves over $\shpmuord$, for all symmetric weights $\lambda'$ of $\Levii$. 
 In Proposition \ref{split_prop}, we denoted these operators by $(\underline{\CD}^{\lambda'})^\pee$. Note that they do not arise as the \modp reduction of $p$-adic differential operators.

\section{The Hodge--de Rham filtration in characteristic $p$}\label{HD-section}

\newcommand{\hcrysone}{H_{\rm crys}^1}

\newcommand{\hcrys}{H_{\rm crys}}

\newcommand{\hcrystau}{H_{{\rm crys},\tau}}

\newcommand{\hcrystauu}{H_{{\rm crys},\tau^*}}

A key ingredient in the construction of $p$-adic differential operators over the $\mu$-ordinary locus (as in \cite{EiMa}) is the existence of a (canonical) splitting of the Hodge--de Rham filtration of the universal abelian scheme (Proposition \ref{Usubmodule-section}).  A natural first step towards extending the \modp reduction of these differential operators from the $\mu$-ordinary locus $\shpmuord$ to the whole Shimura variety $\Shp$ is to investigate whether  such a splitting extends from $\shpmuord$ to $\Shp$.
When the ordinary locus is nonempty, such a splitting over $\Shp$ exists.  Indeed, it can be constructed via the conjugate Hodge--de Rham spectral sequence in positive characteristic.  This is also the key ingredient underlying the construction of the ordinary Hasse invariant (see \cite[\S 3.3.1]{EFGMM}).
When the ordinary locus is empty, though, this approach fails.  Instead, we adapt our approach to  the construction of the $\mu$-ordinary Hasse invariant by Goldring--Nicole for PEL-type Shimura varieties in \cite{GN}.  (The yet more general setting of Hodge type should be straightforward using \cite{KWhasse}, once one sorts out numerical details.)  The techniques we develop in this more general setting work only under certain conditions on the weights, which arise from Goldring--Nicole's approach to partial Hasse invariants in \cite{GN}.

For any $\co\in\mathfrak{O}_F$, we write\index{$\cf(\co)$}\index{$\cf(\co)_{>0}$}\index{$\cf(\co)_{<n}$} 
\begin{align*}
\cf(\co)&:=\{\cf(\tau)|\tau\in\co\}\\
\cf(\co)_{>0}&:=\{\cf(\tau)|\tau\in\co \text{ satisfying } \cf(\tau)> 0\}\\
\cf(\co)_{<n}&:=\{\cf(\tau)|\tau\in\co \text{ satisfying } \cf(\tau)<n\}.
\end{align*}

\begin{defi}\label{goodweight}
We call a positive dominant weight $\kappa=(\kappa_\tau)_{\tau\in\T_F}$ {\em good} (for the prime $p$) if 
 $\kappa_\tau$ is a scalar weight of $\GL_{\cf(\tau)}$
whenever
 $\cf(\tau)\neq\min(\cf(\co)_{>0})$.
\end{defi}
In particular, all scalar weights are good.  As we shall in Theorem \ref{PEP_thm}, if a weight is good, then after reducing mod $p$, we can extend the splitting from Proposition \ref{Usubmodule-section} to the entire mod $p$ Shimura variety $\Shp$.

By definition, if we decompose $\kappa$ over $\T_F$, as a product\footnote{Whether we view the decomposition as a product or sum depends on whether we are considering weights as characters or as the corresponding tuples of integers.} of weights $\kappa_\tau$ supported at  $\tau\in\T_F$, then $\kappa$ is good if and only if the weights $\kappa_\tau$ are good for all $\tau\in\T_F$.

The main goal of this section is to establish the following result, whose proof relies on the material introduced in the remainder of this section.
Below, $\HA_\Sigma\in H^0(\Shp,\omega_{\CA/\Shp}^{\kappa_{\ha,\Sigma}})$  denotes the $\Sigma$-Hasse invariant, and  $\HA=\HA_{\T_F}$ (see Section \ref{hasse_sec}).
We refer to Definition \ref{twistweight} for the notation $||\kappa||\in \ZZ^{|\T_F|}$.

\begin{thm}\label{PEP_thm}
Let $\Sigma\subseteq\T_{F}$, and let $\kappa$ be a weight. Assume $\kappa$ is good.  Then each of the following statements holds:

\begin{enumerate}
\item There exists
a morphism of $\CO_\Shp$-modules\index{$\Pi^\kappa$}
\begin{align}
\Pi^\kappa:  \hdrone(\CA/\Shp)^\kappa \to \omega_{\CA/\Shp}^{\kappa+||\kappa||\kappa_\ha}=
\omega_{\CA/\Shp}^\kappa\otimes\omega_{\CA/\Shp}^{||\kappa||\kappa_\ha}\label{PEP_thm1}
\end{align}
that satisfies the equality
 $\Pi^\kappa_{\vert \shpmuord}=\HA^{||\kappa||}\cdot \overline{\pi}_U,$
where $\overline{\pi}_U$ is the \modp reduction of the map
$\pi_U:  \hdrone(\CA/\shpmuord)^\kappa \to\omega_{\CA/\shmuord}^\kappa$
 induced via the $\kappa$-Schur functor by 
 the projection 
 $\pi_U:\hdrone(\CA/\shpmuord)\to\omega_{\CA/\shpmuord}$ defined in Section \ref{defD_sec}.

\item Assume $\kappa$ is supported at $\Sigma$.  Then there exists
a morphism of $\CO_\Shp$-modules
\[\Pi_\kappa: 
D(\omega_{\CA/\Shp}^\kappa)
\to \omega_{\CA/\Shp}^{\kappa+\kappa_{\ha,\Sigma}}\otimes \omega^2_{\CA/\Shp}=\omega_{\CA/\Shp}^\kappa\otimes\omega_{\CA/\Shp}^{\kappa_{\ha,\Sigma}}\otimes \omega^2_{\CA/\Shp}\] 
which satisfies the equality
 $\Pi_{\kappa\vert \shpmuord}=\HA_\Sigma\cdot \overline{\pi}_U,$
where $\overline{\pi}_U$ is the \modp reduction of the projection
$\pi_U: D(\omega^\kappa_{\CA/\shpmuord})
\to\omega_{\CA/\shmuord}^\kappa\otimes  \omega_{\CA/\shpmuord}^2$
 defined in Section \ref{defD_sec}.

\end{enumerate}
\end{thm}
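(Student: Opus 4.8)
The plan is to read both statements as \emph{analytic continuation} problems. In each case a morphism $\overline{\pi}_U$ of locally free sheaves is given only over the dense open $\shpmuord$, and the task is to show that after tensoring with an explicit Hasse invariant it extends to a morphism over all of $\Shp$. Since $\Shp$ is smooth (hence reduced) and $\shpmuord$ is dense in it (by \cite[(1.6.2)]{wedhorn}), any such extension is unique, and a globally defined morphism agreeing with $\HA^{||\kappa||}\overline{\pi}_U$ (resp. $\HA_\Sigma\,\overline{\pi}_U$) on $\shpmuord$ is automatically the desired $\Pi^\kappa$ (resp. $\Pi_\kappa$). So the entire content is the \emph{construction} of a global morphism, and for this I would re-express $\overline{\pi}_U$ in terms of the Frobenius-linear maps underlying the Goldring--Nicole partial Hasse invariants of \cite{GN}, which are defined over all of $\Shp$, and then clear the determinant in the denominator by a cofactor (adjugate) identity.

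First I would assemble the global input. Alongside the Hodge filtration $\omega\subset\hdrone(\CA/\Shp)$ I would use the conjugate filtration attached to the relative Frobenius; both are defined on all of $\Shp$, and over $\shpmuord$ the submodule $U$ of Proposition \ref{Usubmodule-section} is recovered as the $({\rm Fr}^*)^\be$-stable complement. Following \cite[Definitions 3.3--3.5]{GN}, for each $\tau$ the partial Hasse invariant $\HA_\tau$ is the determinant of the Frobenius ``return map'' obtained by composing the Verschiebung-induced maps once around the $\sigma$-orbit $\co_\tau$; these composites are globally defined morphisms between Frobenius twists of the graded Hodge pieces, nonvanishing exactly on $\shpmuord$, where the return map becomes an isomorphism. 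Because good weights decompose over $\T_F$ and $\pi_U$ respects $\omega=\oplus_\tau\omega_\tau$, I would reduce the whole problem to a single $\sigma$-orbit and bookkeep one $\tau$ at a time.

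The mechanism is then the adjugate trick. Over $\shpmuord$ one has $\pi_U=\mathrm{id}-j_U\circ(q|_U)^{-1}\circ q$, where $q\colon\hdrone\to\hdrone/\omega$ is the Hodge quotient, $j_U$ the inclusion, and $q|_U$ the isomorphism realizing $U$ as a complement. Inverting $q|_U$ amounts to inverting the iterated Frobenius defining $U$, and by the Goldring--Nicole construction the determinant obstruction to this inversion is precisely the partial Hasse invariant; concretely $(q|_U)^{-1}$ is $\HA^{-1}$ times a globally regular, adjugate-type morphism. Hence multiplying $\pi_U$ by a suitable power of $\HA$ produces a morphism regular on all of $\Shp$. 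The exponent is forced by how often $\det=\HA$ must be cleared once the projection is pushed through the relevant functor: in part (1) the $\kappa$-Schur functor makes the determinant character appear exactly $||\kappa_\tau||$ times at each $\tau$ (this is the characterization of $||\kappa_\tau||$ in Definition \ref{twistweight}), giving the global twist $\HA^{||\kappa||}$; in part (2) a single application of $D=(\id\otimes\iota)\circ\nabla$ moves, via Leibniz, exactly one Hodge factor into $\hdrone$ at an embedding in $\Sigma$, so only one projection is needed per term, and taking the uniform multiplier $\HA_\Sigma$ (whose remaining factors are global sections) clears all terms at once.

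The main obstacle, and the role of the goodness hypothesis, is to verify that for good weights the \emph{only} denominator introduced by inverting the return map is its determinant $\HA_\tau$, and not some finer invariant. At the embeddings where goodness forces $\kappa_\tau$ to be scalar, namely those with $\cf(\tau)\neq\min(\cf(\co)_{>0})$, only top exterior powers $|\omega_\tau|$ occur, and inverting the return map there costs exactly one power of $\det=\HA_\tau$ per unit of weight---the clean, ordinary-type situation. At the distinguished minimal embedding, where an arbitrary dominant $\kappa_\tau$ is allowed, I would descend into the explicit structure of the unique $\mu$-ordinary Barsotti--Tate group (\cite[Theorem 3.2.7]{moonen} together with the block description in \cite[Section 3]{EiMa} and \cite[Definitions 3.3--3.5]{GN}) to show that the block governing the Hodge piece at that embedding is a square isomorphism whose inverse is controlled by $\HA_\tau$ alone. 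Goodness is exactly the condition ruling out the mixed, non-square blocks whose inversion would require dividing by partial minors rather than by the full Hasse invariant. Carrying out this block analysis and matching the resulting exponents against Definition \ref{twistweight} is where the real work lies.
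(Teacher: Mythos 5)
Your proposal has the right skeleton, and in broad strokes it follows the paper's own route: the paper likewise reduces to a good weight supported at a single $\tau$, handles scalar weights by dualizing the Goldring--Nicole divided-Frobenius construction (Proposition \ref{PIEpi1}, via Lemma \ref{lemma1} $=$ \cite[Lemmas 3.1, 3.2]{GN}), handles the non-scalar case at the minimal embedding by composing with an adjugate ${}^*\tilde{h}_\tau$ satisfying ${}^*\tilde{h}_\tau\circ\tilde{h}_\tau=1\otimes\HA_\tau$ (Proposition \ref{PIEpi2}), and your exponent bookkeeping ($\HA^{||\kappa||}$ in part (1) via Definition \ref{twistweight}, a single $\HA_\Sigma$ in part (2) via Leibniz) matches the statement. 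The organizational difference --- you invert the return map and clear denominators, while the paper builds $\Pi$ directly as a composition of globally defined maps and then checks its restriction to $\shpmuord$ --- is immaterial.

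The genuine gap is at the one step that is actually new in this theorem: the vector-weight case at the embedding with $\cf(\tau)=\min(\cf(\co)_{>0})$. Your adjugate trick only clears a determinant from a map that is \emph{already regular over all of} $\Shp$, so its input must be a globally defined, integral return map at the level of $\hdrtau^1$ itself, not of its top exterior power. But \cite{GN} only supply the latter: their Lemmas 3.1--3.2 give divisibility of $\Phi_{\tau^\ast}=\phi_{\tau^\ast}\wedge\cdots\wedge\phi_{\tau^\ast}$ by $p^{c_{\tau^\ast}}$ on $\bigwedge^{\cf(\tau)}\hcrystauu^1$, which is exactly what it takes to define a line-bundle section (a Hasse invariant) and no more. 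What part (1) needs for non-scalar $\kappa_\tau$ is the paper's Lemma \ref{lemma2}: the un-exterior-powered crystalline iterate $\phi_{\tau^\ast}$ is divisible by $p^{a_{\tau^\ast}}$ over the whole of $\Shp$, and the reduction $\overline{\phi_{\tau^\ast}/p^{a_{\tau^\ast}}}$ kills $\omega_{\tau^\ast}$. This is proved by rerunning the Goldring--Nicole crystalline argument using the numerical identity $c_{\tau^\ast}=\cf(\tau)\,a_{\tau^\ast}$ (Remark \ref{ctau}), which holds precisely because of the minimality hypothesis --- this is where goodness enters, not merely as a condition ruling out ``non-square blocks.'' Your proposed substitute, a block analysis of the unique $\mu$-ordinary Barsotti--Tate group of \cite[Theorem 3.2.7]{moonen}, cannot supply this: that structure exists only over $\shpmuord$, where the return map is invertible and there are no denominators to control in the first place; the entire content of the theorem is regularity across the complement of $\shpmuord$, about which the $\mu$-ordinary block structure says nothing (unless you add a separate propagation argument --- $p$-torsion-freeness of the crystal plus density and reducedness --- which is absent from your sketch). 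Relatedly, your description of the global input as ``composing the Verschiebung-induced maps once around the $\sigma$-orbit'' omits the division by $p^{c_\tau}$ in crystalline cohomology; without it, those mod $p$ composites vanish identically whenever the ordinary locus is empty (and the conjugate-filtration splitting you invoke fails in exactly this situation, as the paper notes at the start of Section \ref{HD-section}), which is the very degeneration this theorem is designed to circumvent.
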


\begin{rmk}
Note that since the Hasse invariant is of scalar weight, we do indeed have an equality
 $\omega_{\CA/\Shp}^{\kappa+||\kappa||\kappa_\ha}=
\omega_{\CA/\Shp}^\kappa\otimes\omega_{\CA/\Shp}^{||\kappa||\kappa_\ha}$ as in Equation \eqref{PEP_thm1}.
\end{rmk}

\begin{proof}[Proof of Theorem \ref{PEP_thm}]
By decomposing the weight $\kappa$ as a product of weights supported at single $\tau\in\T_F$, as $\tau$ varies in $\T_F$, we reduce the proof of Theorem \ref{PEP_thm}
to the special case of a good weight $\kappa$ which is supported at a single $\tau$. 
For scalar weights, the result follows from Proposition \ref{PIEpi1} which relies on the construction of the $\mu$-ordinary Hasse invariant (\cite[Lemmas 3.1 and 3.2]{GN}, see Lemma \ref{lemma1}). For non-scalar weights (which, by definition of good weights, are only supported at $\tau\in\T_F$ satisfying $\cf(\tau)=\min(\cf(\co)_{>0})$), the result follows from Proposition \ref{PIEpi2} which relies on Lemma \ref{lemma2}. 
\end{proof}

In this section, we work in positive characteristic over $\Shp$. Set $\hdrone:=\hdrone(\CA/\Shp)$, and $\omega:=\omega_{\CA/\Shp}$.
We write $\hcrysone:=\hcrysone (\CA/\Shp)$ for the Dieudonn\'e crystal of $\CA/\Shp$,
 and identify its \modp reduction with $\hdrone$. In general, we denote by $\overline{(\cdot)} $ the reduction \modp of an object  over $\Witt$.

For convenience, through out this section, we set $\T=\T_F$. Also, for any $\tau\in\T_F$, write  $\tau_0=\tau_{\vert F_0}\in\T_{F_0}$, and following the conventions of Section \ref{signature-intro}, $(\cdot)_\tau=(\cdot )^+_{\tau_0}$ if $\tau\in \Sigma_F$ and
$(\cdot)_\tau=(\cdot )^-_{\tau_0}$ if $\tau\not\in \Sigma_F$.

Fix $\tau\in\T$. The restriction to $\hcrystau^1$ of Frobenius ${\rm Fr}^*$ on $\hcrysone$ induces  a map\index{${\rm Fr}^*_\tau$}
\[{\rm Fr}^*_\tau:={\rm Fr}^*|_{\hcrystau^1}: \hcrystau^1\rightarrow H^1_{\mathrm{crys}, \tau\circ\sigma}.\]

For  $e=e_\tau := \#\orbit_\tau$,
we have  $ \tau\circ\sigma^e = \tau$, and $({ \rm Fr}^*_\tau)^e:(\hcrystau^1)^{(p^e)}\to\hcrystau^1.$
We write\index{$\phi_\tau$}
$\phi_\tau:=({ \rm Fr}^*_\tau)^e$.

\subsection{The scalar-weight case}\label{scalarsplit}
Without loss of generality, we assume $\cf(\tau)\neq 0$.  (When $\cf(\tau)=0$, the Hodge--de Rham filtration is trivial at $\tau$ and all automorphic weights supported at 
$\tau$ are trivial.)
Then the Hodge--de Rham filtration on $\hdrtauu^1$, $\omega_\tauu\subseteq \hdrtauu^1$, induces a canonical filtration on  $\bigwedge^{\cf\left(\tau^*\right)}\hdrtauu^1=\hdrtauu^{\cf\left(\tau^*\right)}$.  The first step in the filtration is the locally free subsheaf\index{${\mathcal W}_\tauu$}
\[{\mathcal W}_\tauu:=\mathrm{Fil}^1\left(\hdrtauu^{\cf(\tau)}\right)=
\ker\left( \bigwedge^{\cf(\tau)}\hdrtauu^1\to \bigwedge^{\cf(\tau)}\left(\hdrtauu^1/\omega_\tauu\right)\right) .
\]

\begin{lem}(\cite[Lemmas 3.1 and 3.2]{GN})\label{lemma1}
For $\tau\in\T$ with $\cf(\tau)\neq 0$, define 
\[\Phi_\tauu:= \phi_\tauu\wedge\cdots \wedge\phi_\tauu: \bigwedge^{\cf(\tau)}\hcrystauu^{1\, (p^e)}\to \bigwedge^{\cf(\tau)}\hcrystauu^1 \]
and  $c_\tauu :=\sum_{\tau'\in\orbit_\tau, \\
\cf(\tau')>\cf(\tau)} \left(\cf(\tau')-\cf(\tau)\right).$
Then each of the following statements holds:
\begin{enumerate}
\item $\Phi_\tauu$  is divisible by $p^{c_\tauu}$.
 \item $\overline{\Phi_\tauu/p^{c_\tauu}}$ vanishes on ${\mathcal W}^{(p^e)}_\tauu$. 
\end{enumerate}
\end{lem}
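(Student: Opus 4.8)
The plan is to reduce the statement, which concerns a morphism of locally free sheaves valued in the Dieudonn\'e crystal, to a computation of elementary divisors of Frobenius on the evaluated Dieudonn\'e module, and then to reproduce the bookkeeping of Goldring--Nicole. First I would localize: since both assertions are local on $\Shp$, I would evaluate the crystal $\hcrysone$ on a small affine (or on its $\Witt$-lift), so that each $\hcrystau^1$ becomes a free module of rank $n$, and each single-step map ${\rm Fr}^*_\tau\colon \hcrystau^1\to H^1_{{\rm crys},\tau\circ\sigma}$ becomes an honest $\sigma$-semilinear map, with the Dieudonn\'e relation ${\rm Fr}^*V=V{\rm Fr}^*=p$ holding universally over $\Shp$.

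The key local input is the elementary-divisor shape of each single step. Using that the Hodge filtration is cut out by the reduction of Frobenius (equivalently, is the image of Verschiebung), together with ${\rm Fr}^*V=p$ (which forces every single-step cokernel to be killed by $p$), I would show that each ${\rm Fr}^*_\tau$ has all elementary divisors equal to $1$ or $p$, the number of $p$'s being the length of its cokernel, namely the corresponding Hodge number $\cf(\cdot)$ over $\shpmuord$ and at least that number everywhere. Taking $\bigwedge^{\cf(\tau)}$ and composing the single steps around the orbit $\orbit_\tau$ to form $\Phi_\tauu=\bigwedge^{\cf(\tau)}\phi_\tauu$, the $p$-adic valuation of the composite is at least the sum of the single-step contributions; summing these and simplifying as in \emph{loc. cit.} yields exactly $\sum_{\tau'\in\orbit_\tau,\ \cf(\tau')>\cf(\tau)}(\cf(\tau')-\cf(\tau))=c_\tauu$, with equality over the dense locus $\shpmuord$. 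Because $p^{c_\tauu}$-divisibility of a morphism of free modules is a pointwise (hence local) condition, this proves (1) over all of $\Shp$.

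For (2), I would observe that $\overline{\Phi_\tauu/p^{c_\tauu}}$ is a morphism of locally free $\CO_\Shp$-modules; since $\Shp$ is reduced and $\shpmuord$ is dense, it suffices to check the vanishing on $\shpmuord$, where the explicit $\mu$-ordinary form of the Dieudonn\'e module (Moonen's classification) is available. There the valuation is exactly $c_\tauu$, so $\overline{\Phi_\tauu/p^{c_\tauu}}$ is nonzero and is assembled precisely from the valuation-zero (unit) parts of the single steps. These unit parts are, by construction, the directions complementary to the kernel of reduced Frobenius, i.e.\ complementary to $\omega_\tauu$; hence the induced map on $\bigwedge^{\cf(\tau)}\hdrtauu^1$ factors through $\bigwedge^{\cf(\tau)}(\hdrtauu^1/\omega_\tauu)$ and therefore annihilates ${\mathcal W}_\tauu=\mathrm{Fil}^1$. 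At points off $\shpmuord$ the valuation is strictly larger, so $\overline{\Phi_\tauu/p^{c_\tauu}}$ vanishes identically there, which is consistent with the claim.

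The main obstacle is the precise bookkeeping in the middle step: matching the valuation of the exterior power of the composite to the exact value $c_\tauu$, and, for (2), identifying the reduced extremal map with the projection to the top exterior power of $\hdrtauu^1/\omega_\tauu$. This is exactly the content of \cite[Lemmas 3.1 and 3.2]{GN}, and the only point requiring care beyond their pointwise statements is uniformity over all of $\Shp$, which I would obtain from the semicontinuity of $p$-divisibility together with the density of $\shpmuord$.
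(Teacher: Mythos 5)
The first thing to note is that the paper contains no proof of this lemma: as the citation embedded in its statement indicates, it is imported verbatim from Goldring--Nicole, and the paper's ``proof'' consists precisely of the reference to \cite[Lemmas 3.1 and 3.2]{GN}. Your proposal is therefore not competing with an internal argument; it is a reconstruction of the Goldring--Nicole argument, and since you yourself defer the decisive bookkeeping (matching the valuation of $\Phi_\tauu$ to the exact constant $c_\tauu$, and identifying the reduced extremal map with the projection modulo ${\mathcal W}_\tauu$) to \emph{loc.\ cit.}, your route is in substance the same as the paper's. Two points in your sketch deserve correction, though neither breaks it. First, for a \emph{single} step ${\rm Fr}^*_\tau$ the number of elementary divisors equal to $p$ is \emph{exactly} the corresponding Hodge number everywhere on $\Shp$, not ``exactly over $\shpmuord$ and at least that number everywhere'': the Hodge bundle is a subbundle of constant rank equal to $\ker(\overline{{\rm Fr}^*})$, and $p\,\hcrysone\subseteq {\rm Fr}^*(\hcrysone)$ forces all elementary divisors to be $1$ or $p$; what can jump off the $\mu$-ordinary locus is only the valuation of the composite $\phi_\tauu$, which exceeds the sum of the single-step minima there. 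Second, because of this, the density/semicontinuity scaffolding you lean on is dispensable: part (1) follows from the global, bundle-level inclusion ${\rm Fr}^*(\omega)\subseteq p\,\hcrysone$ together with the wedge count, and part (2) follows because the first step of $\phi_\tauu$ kills $\omega_\tauu$ modulo $p$, so any wedge meeting $\omega_\tauu$ (i.e., any section of ${\mathcal W}_\tauu^{(p^e)}$) acquires at least one power of $p$ beyond $p^{c_\tauu}$ --- both statements hold at every point of $\Shp$, with no appeal to density. Your density argument is in fact valid for (2) (a morphism of locally free sheaves on the reduced scheme $\Shp$ vanishing on a dense open vanishes identically), but for (1) ``semicontinuity of $p$-divisibility'' is not a standard fact and needs to be replaced by the precise statement that divisibility can be checked one power of $p$ at a time, via vanishing modulo $p$ on the reduced special fiber; as written, that is the loosest step of your proposal.
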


\begin{remark}\label{ctau}
Set
$a_i=a_i^\tau:=\vert\{\tau'\in\orbit_\tau|\cf(\tau')>n-i\}\vert
,$ for $i=1,\dots, n$.  By definition (Definition \ref{muNP}), the rational numbers
$a^\tau_1/e\leq \cdots\leq a^\tau_n/e$,  
are the slopes of the Newton polygon $\nu_{\co_\tau}(n,\cf)$ (occurring with multiplicity).
Then
\[c_\tauu= \sum_{i=1}^{f\left(\tau\right)}a_i^\tauu.\]
\end{remark}

We write
$\varphi_\tauu: \left(\bigwedge^{\cf(\tau)}\hdrtauu^{1\, (p^e)}\right)/{\mathcal W}^{(p^e)}_\tauu\to \bigwedge^{\cf(\tau)}\hdrtauu^{1}$
for the morphism induced by $\overline{\Phi_\tauu/p^{c_\tauu}}$, and denote by
$\varphi^0_\tauu:  \left(\bigwedge^{\cf(\tau)}\hdrtauu^{1\,(p^e)}\right)/{\mathcal W}_\tauu^{(p^e)}  \to \left(\bigwedge^{\cf(\tau)}\hdrtauu^{1}\right)/{\mathcal W}_\tauu $ its composition with the projection modulo ${\mathcal W}_\tauu$. 
They fit in the commutative diagram

$$\xymatrix{
\bigwedge^{\cf(\tau)}\hdrtauu^{1\, (p^e)} \ar@{->>}[dd]^{\bmod{\mathcal W}^{(p^e)}_\tauu}\ \ar[rrrr]^{\overline{\Phi_\tauu/p^{c_\tauu}}} &&&&\bigwedge^{\cf(\tau)}\hdrtauu^1\ar@{->>}[dd]^{\bmod {\mathcal W}_\tauu}\\
\\
\left(\bigwedge^{\cf(\tau)}\hdrtauu^{1\, (p^e)}\right)/{\mathcal W}^{(p^e)}_\tauu \ar[rrrr]^{\varphi^0_\tauu}\ar[rrrruu]^{\varphi_\tauu}     &&&     &\left(\bigwedge^{\cf(\tau)}\hdrtauu^1\right)/{{\mathcal W}_\tauu}
}$$

The projection
$\bigwedge^{\cf(\tau)}{\rm pr}_\tauu:\bigwedge^{\cf(\tau)} \hdrtauu^1\to \bigwedge^{\cf(\tau)}(\omega^\vee)_\tauu$ induces an isomorphism 
\[\left(\bigwedge^{\cf(\tau)}\hdrtauu^1\right)/{\mathcal W}_\tauu \simeq \bigwedge^{\cf(\tau)}(\omega^\vee)_\tauu=
\bigwedge^{\cf(\tau)}(\omega_{\tau})^\vee.\]
We define $\Pi_\tau:=(\varphi_{\tau^*})^\vee$ and $h_\tau:=(\varphi^0_{\tau^*})^\vee$, and  consider the commutative diagram (dual to the one above)

$$\xymatrix{
\bigwedge^{\cf(\tau)}\hdrtau^{1\,(p^e)} &&&&\bigwedge^{\cf(\tau)}\hdrtau^1\ar[llll]_{ (\overline{\Phi_{\tau^*}/p^{c_{\tau^*}}})^\vee}\ar[lllldd]_{\Pi_\tau}\\
\\
\bigwedge^{\cf(\tau)}\omega_{\tau}^{\left(p^e\right)}\ar@{^{(}->}[uu]    
&&&     &\bigwedge^{\cf(\tau)}\omega_{\tau}\ar[llll]_{h_\tau}\ar@{^{(}->}[uu] \\
}$$

\begin{prop}\label{PIEpi1} Maintaining the above notation,
for each $\tau\in\CT$ with $\cf(\tau)\neq 0$, the morphism of $\CO_\Shp$-modules
\[\Pi_{\tau}:  \bigwedge^{\cf(\tau)} \hdrtau^1\to  \bigwedge^{\cf(\tau)}\omega_\tau^{(p^e)}= \bigwedge^{\cf(\tau)}\omega_\tau \otimes |\omega_\tau|^{p^{e_\tau-1}}\] 
satisfies the equality
 \[\Pi_{\tau}|_\shpmuord=\HA_\tau\cdot \left (\bigwedge^{\cf(\tau)}\overline{\pi}_{\tau}\right).\]
where 
$\HA_\tau \in H^0(\Shp,|\omega_\tau|^{p^{e_\tau}-1})$ is the $\tau$-Hasse invariant,  
and $\overline{\pi}_\tau$ is the \modp reduction of the morphism $\pi_\tau:\hdrone(\CA/\shmuord)_\tau^1\to(\omega_{\CA/\shmuord})_\tau$ given in (\ref{pitau}). 
\end{prop}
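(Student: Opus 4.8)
The plan is to split the verification into two parts: the behavior of $\Pi_{\tau}$ on the sub-line-bundle $\bigwedge^{\cf(\tau)}\omega_\tau$, which I can read off \emph{globally} from the commutative diagrams preceding the statement, and its vanishing on a complement, which holds only over $\shpmuord$ and rests on the slope decomposition of the $\mu$-ordinary Dieudonn\'e crystal. First I would record that $\Pi_{\tau}$ is a well-defined morphism of $\CO_{\Shp}$-modules: Lemma \ref{lemma1}(1) makes $\overline{\Phi_{\tauu}/p^{c_{\tauu}}}$ meaningful modulo $p$, Lemma \ref{lemma1}(2) lets it descend to $\varphi_{\tauu}$ on the quotient by ${\mathcal W}^{(p^e)}_{\tauu}$, and dualizing (using $(\bigwedge^{\cf(\tau)}\hdrtauu^1)^\vee\cong\bigwedge^{\cf(\tau)}\hdrtau^1$ from the polarization, together with the line-bundle identification recorded in the statement) produces $\Pi_{\tau}=(\varphi_{\tauu})^\vee$ with the asserted source and target. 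Thus only the restriction formula over $\shpmuord$ remains to be checked.

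Next I would treat the ``diagonal'' piece, which I claim holds over all of $\Shp$. Dualizing the lower commutative diagram, the natural inclusion $\iota\colon\bigwedge^{\cf(\tau)}\omega_\tau\hookrightarrow\bigwedge^{\cf(\tau)}\hdrtau^1$ is exactly the transpose of the projection modulo ${\mathcal W}_{\tauu}$, so $\Pi_{\tau}\circ\iota=(\varphi^0_{\tauu})^\vee=h_\tau$. By the Goldring--Nicole definition of the $\tau$-Hasse invariant (\cite[Definition 3.3]{GN}), the map $h_\tau\colon|\omega_\tau|\to|\omega_\tau|^{p^{e_\tau}}$ is multiplication by $\HA_\tau$. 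Hence $\Pi_{\tau}$ agrees with $\HA_\tau$ on $\bigwedge^{\cf(\tau)}\omega_\tau$ over all of $\Shp$; and since $\bigwedge^{\cf(\tau)}\overline{\pi}_\tau$ restricts to the identity on $\bigwedge^{\cf(\tau)}\omega_\tau$, both sides of the desired equality coincide on this sub-line-bundle.

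The crux is the vanishing of $\Pi_{\tau}|_{\shpmuord}$ on a complement of $\bigwedge^{\cf(\tau)}\omega_\tau$. Over $\shpmuord$ the reduction modulo $p$ of the splitting in Proposition \ref{Usubmodule-section} gives $\hdrtau^1=\omega_\tau\oplus\overline{U}_\tau$, inducing
\[\textstyle\bigwedge^{\cf(\tau)}\hdrtau^1=\bigwedge^{\cf(\tau)}\omega_\tau\oplus\bigoplus_{i<\cf(\tau)}\left(\bigwedge^{i}\omega_\tau\otimes\bigwedge^{\cf(\tau)-i}\overline{U}_\tau\right),\]
with $\bigwedge^{\cf(\tau)}\overline{\pi}_\tau$ the projection onto the first summand. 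I would show $\Pi_{\tau}|_{\shpmuord}$ kills every summand with $i<\cf(\tau)$. Dually this says $\overline{\Phi_{\tauu}/p^{c_{\tauu}}}$, restricted to $\shpmuord$, is supported on one extremal piece of the slope decomposition: on the associated graded of the slope filtration $\phi_{\tauu}$ scales the slope-$a_i^{\tauu}/e$ line by $p^{a_i^{\tauu}}$, so on $\bigwedge^{\cf(\tau)}\hcrystauu^1$ the wedge of $\cf(\tau)$ slope vectors has $p$-adic valuation equal to the sum of the corresponding $a_i^{\tauu}$. By Remark \ref{ctau} the normalization $c_{\tauu}=\sum_{i=1}^{\cf(\tau)}a_i^{\tauu}$ is precisely the minimal such valuation, attained only by the bottom $\cf(\tau)$ slopes. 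Hence after dividing by $p^{c_{\tauu}}$ and reducing modulo $p$, every other wedge-monomial acquires a positive power of $p$ and vanishes, so $\overline{\Phi_{\tauu}/p^{c_{\tauu}}}$ is supported on the extremal-slope line, which over $\shpmuord$ is exactly the line matching $\omega_\tau$ under the canonical splitting. Transposing gives the claimed vanishing, and combined with the diagonal computation yields $\Pi_{\tau}|_{\shpmuord}=\HA_\tau\cdot\bigwedge^{\cf(\tau)}\overline{\pi}_\tau$.

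I expect the main obstacle to be this last matching: verifying that the extremal-slope line supporting the divided Frobenius coincides, under the $\mu$-ordinary splitting, with $\bigwedge^{\cf(\tau)}\omega_\tau$, and that the projection $\overline{\pi}_\tau$ of Proposition \ref{Usubmodule-section} agrees with the Hodge-theoretic projection implicit in Goldring--Nicole's construction. Both maps are canonical over $\shpmuord$ and determined by the same slope filtration, so the matching should follow by comparing the two normalizations; but this is exactly the bookkeeping where Lemma \ref{lemma1} and Remark \ref{ctau} must be used on the nose, including the check that the surviving extremal-slope contribution of $\phi_{\tauu}$ is the Hasse invariant $\HA_\tau$ itself and not merely some nonvanishing section.
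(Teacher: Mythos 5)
Your proposal is correct and follows essentially the same route as the paper: the paper's (very terse) proof likewise reduces everything to the commutative diagrams defining $\Pi_\tau$ and $h_\tau$ together with Goldring--Nicole's definition of the $\tau$-Hasse invariant, namely $h_\tau = 1\otimes \HA_\tau$, and then declares the rest ``an immediate consequence of the construction.'' Your slope-valuation verification that $\Pi_\tau|_{\shpmuord}$ kills the complement of $\bigwedge^{\cf(\tau)}\omega_\tau$ coming from the splitting of Proposition \ref{Usubmodule-section} is precisely the bookkeeping on the $\mu$-ordinary Dieudonn\'e module that the paper leaves implicit.
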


\begin{proof}
By definition, 
for each $\tau\in\T$ with $\cf(\tau)\neq 0$, the $\tau$-Hasse invariant $\HA_\tau$ 
satisfies
\[
h_\tau= 1\otimes \HA_\tau:  |\omega_\tau| :=\bigwedge^{\cf\left(\tau\right)}\omega_{\tau} \longrightarrow
{\bigwedge^{\cf\left(\tau\right)} {\omega_{\tau}^{\left(p^{e_\tau}\right)}}}
=|\omega_\tau|^{p^{e_\tau}}
=
 |\omega_\tau|\otimes|\omega_\tau|^{p^{e_\tau}-1}.\] 

Hence, the statement is an immediate consequence of the construction.
\end{proof}

\subsection{The vector weight case}\label{vwcase-section} 
We assume $\cf(\tau)=\min\left(\cf(\co_\tau)_{>0}\right)$. 
The following result is a variation of Lemma \ref{lemma1}

\begin{lem}\label{low}\label{lemma2}
Let  $\tau\in\T$ satisfying $\cf(\tau)=\min\left(\cf(\co_\tau)_{>0}\right)$. 
Consider the map 
\[\phi_\tauu:\hcrystauu^{1\,(p^e)}\to\hcrystauu^1,\]

and let  $a_\tauu:=
\vert\{\tau'\in\orbit_\tauu|\cf(\tau')=n\}\vert$.
\begin{enumerate}
\item $\phi_\tauu$  is divisible by $p^{a_\tauu}$; 
\item $\overline{\phi_\tauu/p^{a_\tauu}}$ vanishes on $\omega_\tauu$. 
\end{enumerate}
\end{lem}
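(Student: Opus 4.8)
The plan is to prove both statements simultaneously by factoring the $e$-fold Frobenius $\phi_\tauu=({\rm Fr}^*_{\tauu})^e$ into its $e$ single steps around the $\sigma$-orbit $\orbit_\tauu$ and tracking, step by step, how many powers of $p$ are forced out. For each $\tau'\in\orbit_\tauu$ the single Frobenius ${\rm Fr}^*_{\tau'}\colon H^1_{{\rm crys},\tau'}\to H^1_{{\rm crys},\tau'\circ\sigma}$ reduces mod $p$ to the $\tau'$-component of the de Rham Frobenius, whose kernel is exactly the Hodge filtration $\omega_{\tau'}$ (the conjugate/Cartier filtration, valid over all of $\Shp$, not merely over $\shpmuord$). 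By a rank count this reduction has rank $n-\cf(\tau')$, so ${\rm Fr}^*_{\tau'}$ carries $\cf(\tau')$ elementary divisors divisible by $p$ and $n-\cf(\tau')$ unit ones; in particular ${\rm Fr}^*_{\tau'}$ is itself divisible by $p$ precisely when $\cf(\tau')=n$ (equivalently $\omega_{\tau'}=H^1_{{\rm crys},\tau'}$, so that $\overline{{\rm Fr}^*_{\tau'}}=0$). These two facts, $\ker\overline{{\rm Fr}^*_{\tau'}}=\omega_{\tau'}$ and ``divisible by $p$ $\iff$ $\cf(\tau')=n$'', are the only crystalline inputs; they are the same inputs Goldring--Nicole use in Lemma \ref{lemma1}, applied here to the map itself rather than to its top exterior power.

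For part (1), I would write $\phi_\tauu$ as the composite of the $e$ single steps ${\rm Fr}^*_{\tau'}$, $\tau'\in\orbit_\tauu$. Since a composite of maps that are individually divisible by $p^{d_{\tau'}}$ is divisible by $p^{\sum_{\tau'}d_{\tau'}}$, and since by the above $d_{\tau'}=1$ exactly for the steps with $\cf(\tau')=n$ and $d_{\tau'}=0$ otherwise, the composite $\phi_\tauu$ is divisible by $p^{N}$ with $N=\#\{\tau'\in\orbit_\tauu\mid\cf(\tau')=n\}=a_\tauu$. This yields the asserted divisibility and defines $\psi:=\phi_\tauu/p^{a_\tauu}$ as a morphism of $\CO_\Shp$-modules.

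For part (2), I would extract one extra power of $p$ from the very first step. Reading $\phi_\tauu$ from its source $\hcrystauu^{1\,(p^e)}$, the first single step is ${\rm Fr}^*_{\tauu}$, whose reduction kills the (twisted) Hodge filtration $\omega_\tauu^{(p^e)}$; hence ${\rm Fr}^*_{\tauu}(\omega_\tauu^{(p^e)})\subseteq p\cdot H^1_{{\rm crys},\tauu\circ\sigma}$ (after the appropriate twist). The remaining $e-1$ steps form a map divisible by $p^{a_\tauu}$, because all $a_\tauu$ of the steps with $\cf=n$ still occur among them. This is precisely where the hypothesis $\cf(\tau)=\min(\cf(\orbit_\tau)_{>0})$ enters: it forces $\cf(\tau)>0$, hence $\cf(\tauu)=n-\cf(\tau)<n$, so that $\tauu$ itself is not one of the $\cf=n$ embeddings and the extra power of $p$ coming from $\omega_\tauu$ is genuinely additional rather than already absorbed into $p^{a_\tauu}$. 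Combining the two estimates gives $\phi_\tauu(\omega_\tauu^{(p^e)})\subseteq p^{a_\tauu+1}\hcrystauu^1$, so $\overline{\psi}=\overline{\phi_\tauu/p^{a_\tauu}}$ vanishes on $\omega_\tauu^{(p^e)}$, as claimed.

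The main obstacle I anticipate is purely bookkeeping: keeping the Frobenius twists $(\cdot)^{(p^i)}$ straight along the orbit, and phrasing the single-step statements ``$\ker=\omega$'' and ``divisibility $\iff\cf=n$'' crystalline-correctly, so that dividing by $p$ produces an honest morphism of crystals whose mod-$p$ reduction is the one in the statement. Once the twisting is fixed, both claims follow at once from the single-step analysis; notably, no appeal to the explicit $\mu$-ordinary Dieudonn\'e module nor to the density of $\shpmuord$ is required, since the two crystalline inputs hold verbatim over all of $\Shp$.
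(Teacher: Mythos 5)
Your proof is correct and follows essentially the same route as the paper: the paper's own proof consists of the numerical observation (Remark \ref{ctau}) that under the hypothesis $a_\tauu=a_1^\tauu$ and $c_\tauu=\cf(\tau)\,a_\tauu$, plus an appeal to ``adapting the arguments of [GN, Lemmas 3.1 and 3.2],'' and that adaptation is precisely your argument\textemdash factoring $\phi_\tauu$ into the $e$ single Frobenius steps around the orbit, extracting one factor of $p$ from each step with $\cf(\tau')=n$ for part (1), and one additional factor from the first step (which kills $\omega_\tauu^{(p^e)}$ mod $p$, using that the hypothesis forces $\cf(\tauu)<n$) for part (2). You simply make explicit, over all of $\Shp$, the single-step crystalline bookkeeping that the paper delegates to the citation.
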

\begin{proof}
By Remark \ref{ctau}, $a_\tau=a_1^\tau$ and 
$c_\tauu=\cf(\tau)a_\tauu$, since, by assumption,  $\cf(\tauu)=\max\left(\cf(\co_\tauu)_{<n}\right)$. This observation suffices to adapt  the arguments in
\cite[Lemmas 3.1 and 3.2]{GN} to establish the above statements.
\end{proof}

We define $\tilde{\Pi}_\tau$ as the morphism dual to the map $\hdrtauu^{1\, (p^e)}/\omega^{(p^e)}_\tauu\to \hdrtauu^{1}$ induced by 
 by $\overline{\phi_\tauu/p^{a_\tauu}}$, and denote by
$\tilde{h}_\tau$
  its composition with the inclusion  $\omega_\tau\hookrightarrow \hdrtau^1$.  They fit in the commutative diagram

$$\xymatrix{
\hdrtau^{1\,(p^e)} &&&&\hdrtau^1\ar[llll]_{ (\phi_{\tau^*}/p^{a_{\tau^*}})^\vee}\ar[lllldd]_{\bar{\Pi}_\tau}\\
\\
\omega_{\tau}^{\left(p^e\right)}\ar@{^{(}->}[uu] 
&&&     &\omega_{\tau}\ar[llll]_{\bar{h}_\tau}\ar@{^{(}->}[uu] \\
}$$

It follows from the construction that $\bigwedge^{\cf(\tau)} \tilde{\Pi}_\tau$ and $\bigwedge^{\cf(\tau)} \tilde{h}_\tau$ agree respectively with the morphisms $\Pi_\tau$ and $h_\tau$ defined in Section \ref{scalarsplit}. 
In particular, let  \[{^*\tilde{h}}_{\tau}: \omega^{(p^e)}_\tau\to\omega_\tau\otimes |\omega^{(p^e)}|\otimes |\omega_\tau|^{-1}= \omega_\tau\otimes |\omega_\tau|^{p^e-1}\] denote the adjugate of $\tilde{h}_\tau$ (see \cite[\S 3.3.1]{EFGMM}), then it satisfies
$^*\tilde{h}_\tau\circ \tilde{h}_\tau=1\otimes \HA_\tau$,
where $\HA_\tau$ is the $\tau$-Hasse invariant.

\begin{prop}\label{PIEpi2} Maintaining the above notation,
for each $\tau\in\CT$  satisfying $\cf(\tau)=\min\left(\cf(\co_\tau)_{>0}\right)$, 
the morphism of $\CO_\Shp$-modules
\[\Pi_{\tau}:= { ^*\tilde{h}_\tau}\circ\tilde{\Pi}_\tau:   \hdrtau^1\to \omega_\tau \otimes |\omega_\tau|^{p^{e_\tau-1}}\] 
satisfies the equality
 \[\Pi_{\tau}|_\shpmuord=\HA_\tau\cdot\pi_{\tau}.\]

\end{prop}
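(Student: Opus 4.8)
The plan is to reduce the identity $\Pi_\tau|_{\shpmuord} = \HA_\tau\cdot\pi_\tau$ to a single statement about the image of a divided Frobenius over the $\mu$-ordinary locus, and then to settle that statement using the slope structure underlying Proposition \ref{Usubmodule-section}. Write $\bar\psi_{\tauu}\colon \hdrtauu^{1\,(p^e)}/\omega^{(p^e)}_{\tauu} \to \hdrtauu^1$ for the map induced by $\overline{\phi_{\tauu}/p^{a_{\tauu}}}$, which is well defined by Lemma \ref{lemma2}(2). By construction $\tilde{\Pi}_\tau = \bar\psi_{\tauu}^{\vee}$, the map $\tilde{h}_\tau$ is the restriction of $\tilde{\Pi}_\tau$ to $\omega_\tau$ (via $\omega_\tau\hookrightarrow\hdrtau^1$), one has the adjugate identity ${}^{*}\tilde{h}_\tau\circ\tilde{h}_\tau = 1\otimes\HA_\tau$ recorded above, and $\Pi_\tau = {}^{*}\tilde{h}_\tau\circ\tilde{\Pi}_\tau$.

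First I would unwind the bookkeeping half. Over $\shpmuord$, reducing the splitting of Proposition \ref{Usubmodule-section} modulo $p$ gives $\hdrtau^1 = \omega_\tau\oplus U_\tau$, with $\pi_\tau$ the projection that is the identity on $\omega_\tau$ and kills $U_\tau$. On the summand $\omega_\tau$ we have $\tilde{\Pi}_\tau|_{\omega_\tau} = \tilde{h}_\tau$, so $\Pi_\tau|_{\omega_\tau} = {}^{*}\tilde{h}_\tau\circ\tilde{h}_\tau = (1\otimes\HA_\tau) = \HA_\tau\cdot\pi_\tau|_{\omega_\tau}$. Hence the whole identity follows once I show that $\tilde{\Pi}_\tau$ vanishes on $U_\tau$ over $\shpmuord$, since then $\Pi_\tau|_{U_\tau} = {}^{*}\tilde{h}_\tau\circ(\tilde{\Pi}_\tau|_{U_\tau}) = 0 = \HA_\tau\cdot\pi_\tau|_{U_\tau}$. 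This mirrors exactly the "immediate from construction" structure of the scalar-weight Proposition \ref{PIEpi1}, with the vanishing on $U_\tau$ playing the role there played by the vanishing of the top exterior power on the non-Hodge part.

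Next I would translate $\tilde{\Pi}_\tau|_{U_\tau}=0$ through the polarization. The perfect pairing $\langle\,,\rangle\colon \hdrtau^1\times\hdrtauu^1\to\CO_{\Shp}$ identifies $\hdrtau^1$ with $(\hdrtauu^1)^{\vee}$ and sends $\omega_\tau$ to $\omega_{\tauu}^{\perp}$ (isotropy of the Hodge filtration); this is precisely the duality under which $\tilde{\Pi}_\tau = \bar\psi_{\tauu}^{\vee}$ has target $(\omega_{\tauu}^{(p^e)})^{\perp} = \omega_\tau^{(p^e)}$. Under this identification $\tilde{\Pi}_\tau(v) = \langle v,\bar\psi_{\tauu}(-)\rangle$, so $\tilde{\Pi}_\tau|_{U_\tau}=0$ is equivalent to $\langle U_\tau,\ \mathrm{image}(\bar\psi_{\tauu})\rangle = 0$. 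I would secure this from two inputs. The first is that $U$ is isotropic, i.e. $U_\tau = U_{\tauu}^{\perp}$: indeed $U_{\tauu}^{\perp}$ is $\nabla$-horizontal and $(\mathrm{Fr}^{*})^{\be}$-stable because the pairing is flat and Frobenius-compatible, and it complements $\omega_\tau = \omega_{\tauu}^{\perp}$ (a dimension count together with $U_{\tauu}^{\perp}\cap\omega_{\tauu}^{\perp} = (U_{\tauu}+\omega_{\tauu})^{\perp} = 0$), so the uniqueness clause of Proposition \ref{Usubmodule-section} forces $U_\tau = U_{\tauu}^{\perp}$, whence $\langle U_\tau, U_{\tauu}\rangle = 0$. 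The second input is the containment $\mathrm{image}(\bar\psi_{\tauu})\subseteq U_{\tauu}$, which combined with isotropy gives $\langle U_\tau,\ \mathrm{image}(\bar\psi_{\tauu})\rangle = 0$.

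The main obstacle is exactly this containment $\mathrm{image}(\bar\psi_{\tauu})\subseteq U_{\tauu}$, where the $\mu$-ordinary structure does the real work. Here I would argue on the slope decomposition of the Dieudonné crystal of $\CG_{\co_{\tauu}}$ over $\shpmuord$: since $\phi_{\tauu} = (\mathrm{Fr}^{*}_{\tauu})^{e}$ has Newton slopes $a_1^{\tauu}\le\cdots\le a_n^{\tauu}$ with $a_1^{\tauu} = a_{\tauu}$ by Remark \ref{ctau}, dividing by $p^{a_{\tauu}}$ and reducing modulo $p$ annihilates every piece of strictly positive slope and restricts to an isomorphism on the minimal-slope piece, so $\mathrm{image}(\bar\psi_{\tauu})$ is precisely that minimal-slope piece. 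Because over the $\mu$-ordinary locus the Hodge summand $\omega_{\tauu}$ accounts for the strictly positive slopes — which is exactly what makes $\overline{\phi_{\tauu}/p^{a_{\tauu}}}$ vanish on $\omega^{(p^e)}_{\tauu}$ in Lemma \ref{lemma2}(2) — the minimal-slope piece lies in the complement $U_{\tauu}$. Making this comparison precise, by matching $\omega_{\tauu}$ against the higher-slope part of the slope filtration and identifying $U_{\tauu}$ with the complementary slopes, is the technical heart; I would carry it out using the description of $U$ via the slope filtration in \cite{EiMa} together with Lemma \ref{lemma2}.
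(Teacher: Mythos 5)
Your proposal is correct and takes the route the paper itself intends: the paper's entire proof of this proposition is the single sentence that the statement ``is a consequence of the constructions,'' and what that phrase suppresses is exactly what you verify, namely that over $\shpmuord$ one checks the identity separately on the two summands of $\hdrtau^1=\uo_\tau\oplus U_\tau$, the $\uo_\tau$-summand being the recorded adjugate identity ${}^*\tilde{h}_\tau\circ\tilde{h}_\tau=1\otimes\HA_\tau$ and the $U_\tau$-summand being the vanishing of $\tilde{\Pi}_\tau$ there. The one point carrying real content---your claim that the image of $\overline{\phi_\tauu/p^{a_\tauu}}$ is the minimal-slope piece of $\hdrtauu^1$ and lies in $U_\tauu$, established via the orthogonality $U_\tau=U_\tauu^{\perp}$ (from the uniqueness clause of Proposition \ref{Usubmodule-section}) together with Lemma \ref{lemma2}---is correct, and it is the same slope-theoretic verification at $\mu$-ordinary geometric points that the paper relies on for its analogous claims (e.g.\ the proofs of Lemmas \ref{Lemma} and \ref{lem}).
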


\begin{proof}
The statement is a consequence of the constructions. 
\end{proof}

By construction, the morphism in Proposition \ref{PIEpi1} agrees with the top exterior power of the morphism  in Proposition \ref{PIEpi2}.

\section{Analytic continuation of the \modp reduction of differential operators}\label{AC-section}

\label{modpdiffop_sec}

In this section,  under some restriction on the weights, we construct weight-raising differential operators $\Theta^\lambda=\Theta_\kappa^\lambda$  on the space of mod $p$ automorphic forms of weight $\kappa$ on $\shp$ which are \modp analogues of Maass--Shimura differential operators. Furthermore, we prove that the restrictions to the $\mu$-ordinary locus $\shpmuord$ of the operators $\Theta^\lambda$ agree with the \modp reduction of the differential operators $\CD^\lambda$ constructed in \cite{EiMa}, multiplied by a power of the $\mu$-ordinary Hasse invariant which\textemdash most importantly---depends only on the weight $\lambda$, and not on $\kappa$.

We prove the following generalization of  \cite[Theorem 3.4.1]{EFGMM} (which, in contrast to the present paper, had required that $p$ split completely in $\reflexfield$).
\begin{thm}\label{ANAlambda_thm}
Let $\Sigma\subseteq \T_F$, 
and let
$\lambda$ be a symmetric weight supported at $\Sigma$. 
Assume either  $\lambda-\delta(\tau)$, for some $\tau\in\Sigma_{F}\cap\Sigma$,  or $\lambda$ is good, and set either $\lambda'=\lambda-\delta(\tau)$ or $\lambda'=\lambda$, respectively.
Then for any good weight $\kappa$ supported at $\Sigma$, there exists a differential operator
\[\Theta^\lambda_\Sigma:=\Theta^\lambda_\kappa:\omega^\kappa\to \omega^{\kappa+\lambda+(|\lambda|/2)\kappa_{\ha_\Sigma} +||\lambda'||\cdot \kappa_\ha}=\omega^{\kappa+\lambda}\otimes\omega^{(|\lambda|/2)\kappa_{\ha,\Sigma}+||\lambda'||\cdot\kappa_\ha}\]
which satisfies 
\[\Theta_\Sigma^\lambda|_\shpmuord\equiv \HA^{{||\lambda'||}}\HA^{|\lambda|/2}_\Sigma\cdot\CD^\lambda \mod p. \]
\end{thm}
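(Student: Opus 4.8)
The plan is to globalize the construction of $\CD^\lambda$ from Section \ref{defD_sec} by replacing its only non-global ingredient, the projection $\pi_U$ (which exists solely over $\shpmuord$), with the morphisms of Theorem \ref{PEP_thm}, which are defined over all of $\Shp$. First I would decompose $\kappa$ and $\lambda$ over their supports and, using the commutativity of the operators $D_\tau$ (Lemma \ref{tau-commute}) together with the multiplicativity of the construction over $\T_F$, reduce to the case where everything is supported at a single place $\tau\in\Sigma$, so that $\CD^\lambda$ is obtained from the $e$-fold iterate $D_\tau^e$ with $e=|\lambda|/2$. The starting point is the factorization $\CD^\lambda_\kappa={\rm pr}_{\kappa,\lambda}\circ(\id\otimes {\rm pr}_\lambda)\circ\pi_U\circ D_\tau^e$ over $\shpmuord$: the algebraic operator $D_\tau^e$, built from the Gauss--Manin connection and the Kodaira--Spencer isomorphism, is already defined over all of $\Shp$, and the $\nabla$-horizontality of $U$ (property \eqref{Prop2} of Proposition \ref{Usubmodule-section}, through the identity $\pi_U\circ(D_{\tau_1}\circ\cdots\circ D_{\tau_e})=\diffop_{\tau_1}\circ\cdots\circ\diffop_{\tau_e}$) lets me apply $\pi_U$ only once, at the very end.

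Next I would replace this single terminal $\pi_U$ by the global morphisms of Theorem \ref{PEP_thm}, separating its action on the leading $\hdrone^\kappa$-factor from its action on the coefficient factors, which by Proposition \ref{prop-sym} contribute the symmetric power $\Sym^e\omega_\tau^2$. The decisive point---and the source of the $\kappa$-independence of the Hasse power---is that the leading factor must be globalized through part (2) of Theorem \ref{PEP_thm}, whose cost is the partial Hasse invariant $\HA_\Sigma$ and whose leading weight is $\kappa$ merely twisted by $\kappa_{\ha,\Sigma}$, and never through part (1), whose cost $\HA^{||\kappa||}$ grows with $\kappa$. Since $D_\tau^e$ moves $e=|\lambda|/2$ slots out of $\omega$, globalizing their projection accumulates precisely the factor $\HA_\Sigma^{|\lambda|/2}$ and the summand $(|\lambda|/2)\kappa_{\ha,\Sigma}$ of the target weight. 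The coefficient is then projected onto the automorphic weight $\lambda'$ using part (1) applied to the good weight $\lambda'$, which contributes exactly $\HA^{||\lambda'||}$ and the summand $||\lambda'||\,\kappa_\ha$; when $\lambda$ itself fails to be good, one first peels off a single $\delta(\tau)$ with $\tau\in\Sigma_F\cap\Sigma$, reducing to the good weight $\lambda'=\lambda-\delta(\tau)$, the extra $\delta(\tau)$ being absorbed by the symmetric-coefficient projection ${\rm pr}_\lambda$.

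With $\Theta^\lambda_\Sigma$ so defined over $\Shp$, I would verify the asserted congruence by restricting to $\shpmuord$ and substituting the defining equalities $\Pi^\kappa|_{\shpmuord}=\HA^{||\kappa||}\,\overline{\pi}_U$ and $\Pi_\kappa|_{\shpmuord}=\HA_\Sigma\,\overline{\pi}_U$ of Theorem \ref{PEP_thm}. Collapsing the resulting composite by horizontality and the projection identity above returns $\HA^{||\lambda'||}\HA_\Sigma^{|\lambda|/2}\cdot\CD^\lambda$, as required. Since $\shpmuord$ is dense in $\Shp$, a morphism of locally free sheaves is determined by its restriction there, so this congruence both characterizes $\Theta^\lambda$ and certifies it as the desired analytic continuation; this is the step that upgrades the split-case result \cite[Theorem 3.4.1]{EFGMM} to the merely unramified setting.

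I expect the main obstacle to be reconciling the genuinely first-order statement of part (2) of Theorem \ref{PEP_thm} with the $e$-th order operator: a naive order-by-order application of part (2) does not commute with $D_\tau$, because the connection $\nabla$ differentiates, via the Leibniz rule, the partial-Hasse factors introduced at each intermediate projection. The resolution I would pursue is to keep the single terminal projection dictated by horizontality and to show that the extra terms produced by such re-differentiation land in weight components strictly below the $\lambda$-isotypic piece, hence are annihilated by ${\rm pr}_\lambda$ and do not affect the congruence. Carrying out this weight-filtration control, together with the bookkeeping that matches the partial Hasse invariant $\HA_\Sigma$ on the leading factor against the full Hasse invariant $\HA$ on the coefficient and confirms the stated $\kappa$-independence, is the technical heart of the argument.
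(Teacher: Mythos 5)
Your construction in the main case is, in essence, the paper's own proof: when $\lambda'=\lambda$ is good, the paper defines $\Theta^\lambda_\Sigma:={\rm pr}_{\kappa,\lambda}\circ(\Pi^{(|\lambda|/2)}_\kappa\otimes\Pi^\lambda)\circ\overline{D}^\lambda_\kappa\circ\iota$, i.e.\ it iterates the mod $p$ algebraic operators $\overline{D}_\tau$ over all of $\Shp$ with no intermediate projections, and only at the end applies the global morphisms of Theorem \ref{PEP_thm} --- part (2) iterated $|\lambda|/2$ times on the leading de Rham factor (cost $\HA_\Sigma^{|\lambda|/2}$) and part (1) on the coefficient factor (cost $\HA^{||\lambda||}$) --- followed by the Schur projections; this is exactly your bookkeeping, including the observation that the $\kappa$-independence comes from never invoking part (1) on the leading factor. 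The genuine divergence is the non-good case $\lambda'=\lambda-\delta(\tau)$: the paper simply sets $\Theta^\lambda_\Sigma:=\Theta_{\Sigma,\tau}\circ\Theta^{\lambda'}_\Sigma$, composing the already-constructed good-weight operator with one application of the basic operator of Theorem \ref{ANA_thm}, whereas you keep a single terminal projection, sending the differentiated coefficient slots to $\omega^{\lambda'}$ via part (1) (legitimate, since $\lambda'$ is good), leaving the final slot --- which lies in $\omega^2_\tau$ already --- untouched, and recombining via the factorization ${\rm pr}_\lambda={\rm pr}_{\lambda',\delta(\tau)}\circ({\rm pr}_{\lambda'}\otimes\id)$. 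Your route has a real advantage here: the paper's composition re-differentiates the Hasse factors produced by $\Theta^{\lambda'}_\Sigma$, so verifying the stated congruence for it requires knowing that the Leibniz terms in $\CD_\tau\bigl(\HA_\Sigma^{|\lambda'|/2}\HA^{||\lambda'||}\cdot(-)\bigr)$ vanish or are killed by the projections, a point the paper leaves implicit; your one-shot operator never creates such terms, at the price that the phrase ``absorbed by ${\rm pr}_\lambda$'' must be made precise as above.

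One correction: your closing paragraph is internally inconsistent. The ``re-differentiation of partial-Hasse factors'' you identify as the main obstacle only arises if one projects at intermediate stages; once you commit, as you do, to a single terminal projection applied after all the $\overline{D}_\tau$'s, no Hasse factors exist yet to be differentiated, and the congruence on $\shpmuord$ follows directly from $\Pi_\kappa|_{\shpmuord}=\HA_\Sigma\cdot\overline{\pi}_U$ and $\Pi^{\lambda'}|_{\shpmuord}=\HA^{||\lambda'||}\cdot\overline{\pi}_U$, the horizontality identity $\pi_U\circ(D_{\tau_1}\circ\cdots\circ D_{\tau_e})=\diffop_{\tau_1}\circ\cdots\circ\diffop_{\tau_e}$, and the fact that the Hasse invariants are scalar sections that pull out of the linear projections. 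So the ``weight-filtration control'' you call the technical heart is not needed in your own construction; it is precisely the issue one would face in justifying the paper's composition in the non-good case.
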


\begin{rmk}
The above statement proves that the differential operator $\CD^\lambda$ can be analytically continued from $\shpmuord$ to the whole \modp Shimura variety $\Shp$. 
The explicit power of the Hasse invariant given in Theorem \ref{ANAlambda_thm}  is not optimal, however. For example, in the case when the ordinary locus is not empty, it is higher than the power given for the ordinary case in \cite[Theorem 3.4.1]{EFGMM}. This is due to the limitations of our construction. In the case when all symmetric weights are good, Theorem 
\ref{ANAlambda_thm} can be improved to the expected power of $\HA$ (see Corollary \ref{allgood_coro}).  
\end{rmk}

\begin{remark}\label{scalarsym}

In the symplectic case (C) all weights are good; in the unitary case (A) this is not the case, and goodness  is in general a strong restriction.

The existence of good symmetric weights is also nontrivial, in case (A). 
Non-zero scalar (and hence good) symmetric weights exist if and only if there is $\tau\in\T_F$ satisfying $\cf(\tau)=\cf(\tau^*)$. In particular, 
non-zero parallel symmetric weights exist only in the symplectic and hermitian case.
Indeed, if a scalar weight $\underline{\ell}=({\ell}_\tau)_{\tau\in\T}\in\ZZ^\T$ is symmetric, then
$\ell_\tau=0$ unless  $\cf(\tau)=\cf(\tau^*)$ and $\ell_\tau=\ell_\tauu$. The converse also holds.
Good (non-scalar) symmetric weights occur more generally.
For example, they exist  if there is an orbit $\co$ in $\T_F$ satisfying $\cf(\co)\subseteq\{0, j(\co) ,n\}$, for some $j(\co)\in\ZZ$ (equivalently, such that the polygon $\nu_\co(n,\cf)$ has at most 2 slopes), or 
if there is an orbit $\co$ in $\T_F$ satisfying $\cf(\tau)>n/2$ for all $\tau\in\co$. 
\end{remark}

\subsection{The differential operators $\Theta_\tau$}
The $p$-adic Maass--Shimura operators $\CD^\lambda$ are constructed via iterations starting from the operators $\CD_{\tau}$, for $\tau\in\Sigma_\tau$. We first construct analogous \modp differential operators $\Theta_{\tau}$, for all $\tau\in\Sigma_{F}$.

More precisely, we establish the following special case of Theorem \ref{ANAlambda_thm}.

\begin{thm} \label{ANA_thm}
Let $\Sigma\subseteq\T_F$ and $\tau\in\Sigma_{F}$.
For any good weight $\kappa$ supported at $\Sigma$, there is a differential operator
\[\Theta_{\Sigma,\tau}: \omega^\kappa\to 
\omega^{\kappa}
\otimes\omega^{\kappa_{\ha,\Sigma}} \otimes \omega_{\tau}^{2},\]
which satisfies 
\[\Theta_{\Sigma, \tau}|_\shpmuord \equiv \HA_\Sigma \cdot\CD_{\tau} \mod p.\]
\end{thm}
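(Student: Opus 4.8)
The plan is to realize $\Theta_{\Sigma,\tau}$ as the composite of the mod $p$ reduction of the \emph{algebraic} operator $D_\tau$ with the morphism $\Pi_\kappa$ supplied by Theorem~\ref{PEP_thm}(2). The point is that, in contrast to the $p$-adic operator $\CD_\tau=\pi_U\circ D_\tau$, the algebraic operator
\[D_\tau\colon\omega^\kappa\longrightarrow\hdrone(\CA/\Shp)^\kappa\otimes\omega_\tau^2\]
is assembled solely from the Gauss--Manin connection, the Kodaira--Spencer isomorphism, and the Hodge inclusion $\omega\hookrightarrow\hdrone$, all of which are defined integrally over $\Sh$. Consequently its reduction $\overline{D}_\tau$ is already defined over the whole mod $p$ Shimura variety $\Shp$; the sole obstruction to extending $\CD_\tau$ beyond $\shpmuord$ is the projection $\pi_U$, which depends on the canonical complement $U$ available only over the $\mu$-ordinary locus.

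First I would invoke Theorem~\ref{PEP_thm}(2): since $\kappa$ is good and supported at $\Sigma$, it provides a morphism of $\CO_\Shp$-modules $\Pi_\kappa\colon D(\omega^\kappa)\to\omega^{\kappa+\kappa_{\ha,\Sigma}}\otimes\omega^2$ satisfying $\Pi_\kappa|_{\shpmuord}=\HA_\Sigma\cdot\overline{\pi}_U$. Because the image of $\overline{D}_\tau$ lies in the $\tau$-summand $\hdrone(\CA/\Shp)^\kappa\otimes\omega_\tau^2\subseteq\hdrone(\CA/\Shp)^\kappa\otimes\omega^2=D(\omega^\kappa)$, I may restrict $\Pi_\kappa$ to it and set
\[\Theta_{\Sigma,\tau}:=\Pi_\kappa\circ\overline{D}_\tau\colon\omega^\kappa\longrightarrow\omega^{\kappa+\kappa_{\ha,\Sigma}}\otimes\omega_\tau^2=\omega^\kappa\otimes\omega^{\kappa_{\ha,\Sigma}}\otimes\omega_\tau^2,\]
where the target is rewritten using that $\kappa_{\ha,\Sigma}$ is scalar and that $\Pi_\kappa$ leaves the $\omega_\tau^2$ factor untouched. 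As the composite of the first-order differential operator $\overline{D}_\tau$ with an $\CO_\Shp$-linear map, $\Theta_{\Sigma,\tau}$ is again a differential operator.

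It then remains to compute the restriction to the $\mu$-ordinary locus. Using the equality $\Pi_\kappa|_{\shpmuord}=\HA_\Sigma\cdot\overline{\pi}_U$, the defining relation $\CD_\tau=\pi_U\circ D_\tau$ of Section~\ref{defD_sec}, and the fact that $\overline{\pi}_U$ is precisely the mod $p$ reduction of $\pi_U$ applied factorwise to the $\hdrone$-factors while fixing $\omega_\tau^2$, I obtain
\[\Theta_{\Sigma,\tau}|_{\shpmuord}=\bigl(\HA_\Sigma\cdot\overline{\pi}_U\bigr)\circ\overline{D}_\tau=\HA_\Sigma\cdot\overline{\pi_U\circ D_\tau}=\HA_\Sigma\cdot\CD_\tau\pmod p,\]
which is the asserted identity.

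The substantive input is entirely contained in Theorem~\ref{PEP_thm}(2) (which rests on the Goldring--Nicole partial Hasse invariants via Propositions~\ref{PIEpi1} and~\ref{PIEpi2}, analytically continuing the Hodge--de Rham splitting up to a single factor of $\HA_\Sigma$), so no genuinely new obstacle arises at this stage; the remaining work is to verify that $\overline{D}_\tau$ is globally defined over $\Shp$ and that the domains, targets, and the single power of $\HA_\Sigma$ all match. The one point deserving care is confirming that $\Pi_\kappa$ modifies only the de~Rham factors and not the $\omega_\tau^2$ factor produced by $D_\tau$, which is what keeps the weight gained equal to $\kappa_{\ha,\Sigma}$ and, crucially, independent of $\kappa$.
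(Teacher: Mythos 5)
Your proposal is correct and takes essentially the same approach as the paper: the paper likewise defines $\Theta_{\Sigma,\tau}:=(\Pi_\kappa\otimes{\mathbb I})\circ\overline{D}_{\tau}$, composing the (entire) mod $p$ reduction of the algebraic operator $D_\tau$ with the morphism $\Pi_\kappa$ of Theorem \ref{PEP_thm}(2) acting on the de Rham factors and the identity on $\omega_\tau^2$. Your explicit restriction computation $\Theta_{\Sigma,\tau}|_{\shpmuord}=(\HA_\Sigma\cdot\overline{\pi}_U)\circ\overline{D}_\tau=\HA_\Sigma\cdot\CD_\tau$ simply spells out what the paper records as an ``immediate consequence'' of Theorem \ref{PEP_thm}.
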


\begin{rmk}
The reader might be surprised to see that in order to extend $\CD_\tau$ to the whole variety $\Shp$, it is only necessary to multiply by the partial Hasse invariant at places in the support $\Sigma$ of $\kappa$, i.e. for any modular form $f$, the poles of $\CD_\tau(f)$ are determined by places in $\Sigma$, regardless of $\tau$.  This can be seen from the construction of $\diffop_\tau$ in terms of $\nabla$ and the fact that $\nabla(fu) = f\nabla(u) + u\otimes df$ for any regular function $f$ and $u\in \otimes_{\tau'\in\Sigma}\uo_{\tau'}$ (which shows that only $\nabla(u)$ could contribute poles and only at places in $\Sigma$).
\end{rmk}

For convenience, in the following, we denote by\index{$\overline{D}_\tau$}
\[\overline{D}_{\tau}:\omega^\kappa \to (\hdrone)^\kappa\otimes\omega_{\tau}^2\]
the \modp reduction of the algebraic differential operator $D_\tau$ over $\Sh$ defined in Section \ref{defDalg_sec}.

\begin{proof}[Proof of Theorem \ref{ANA_thm}]
 For any good weight $\kappa$ supported at $\Sigma$,  define the differential operator
\[\Theta_{\Sigma, \tau}:=\Theta_{\kappa,\tau}:=(\Pi_\kappa\otimes{\mathbb I})\circ\overline{D}_{\tau}: \omega^\kappa\to \overline{D}_\tau(\omega)\subseteq (\hdrone)^\kappa\otimes\omega_{\tau}^2\to\omega^\kappa\otimes
\omega^{\kappa_{\ha,\Sigma}}\otimes \omega_{\tau}^{2}.
\]

By comparing the  constructions of the differential operators  $\Theta_{\kappa,\tau}$ and  $\CD_{\kappa,\tau}$, we see that the statement  is an immediate consequence of Theorem  \ref{PEP_thm}.
\end{proof}

\begin{coro}\label{allgood_coro}
Let $\Sigma\subseteq \T_F$, and let $\lambda$ be a symmetric weight supported at $\Sigma$. 
Assume all symmetric weights supported at $\Sigma$ are good. 
Then for any good weight $\kappa$ supported at $\Sigma$, there exists a differential operator
\[\Theta_\Sigma^\lambda:=\Theta^\lambda_\kappa: \omega^\kappa\to 
\omega^{\kappa+\lambda}\otimes\omega^{(|\lambda|/2)\kappa_{\ha,\Sigma}},\]
which satisfies 
\[\Theta_\Sigma^\lambda|_ \shpmuord \equiv \HA^{|\lambda|/2}_\Sigma \cdot\CD^\lambda \mod p.\]
\end{coro}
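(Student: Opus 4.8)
The plan is to build $\Theta^\lambda_\Sigma$ by iterating the single-place operators $\Theta_{\Sigma,\tau}$ of Theorem \ref{ANA_thm} and to read off the restriction to $\shpmuord$ from the iteration. First I would reduce to the case where $\lambda$ is supported at a single $\tau\in\Sigma_F$. Writing $\lambda=\sum_\tau\lambda_\tau$ with each $\lambda_\tau$ symmetric and supported at $\{\tau,\tau^\ast\}$, the $p$-adic operator $\CD^\lambda$ is the composite, in any order, of the operators $\CD^{\lambda_\tau}$, since the $\CD_\tau$ commute (Lemma \ref{tau-commute}). Because the partial Hasse invariants at distinct places are independent and $\HA_\Sigma=\prod_{\tau\in\Sigma}\HA_\tau$, a single-place construction contributing $\HA_\Sigma^{|\lambda_\tau|/2}$ for each $\tau$ composes to the desired $\HA_\Sigma^{|\lambda|/2}$, using $\sum_\tau|\lambda_\tau|=|\lambda|$. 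So it suffices to treat $\lambda$ supported at one $\tau$, of depth $e=|\lambda|/2$.

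For such $\lambda$ I would set $\Theta^\lambda_\Sigma:=\mathrm{pr}_{\kappa,\lambda}\circ(\id\otimes\mathrm{pr}_\lambda)\circ\Theta_{\Sigma,\tau}^{\,e}$, where $\Theta_{\Sigma,\tau}^{\,e}$ is the $e$-fold iterate of the operator $\Theta_{\Sigma,\tau}=(\Pi_\kappa\otimes\mathbb{I})\circ\overline{D}_\tau$ of Theorem \ref{ANA_thm}, applied at the appropriate weight at each stage, and $\mathrm{pr}_\lambda$, $\mathrm{pr}_{\kappa,\lambda}$ are the projections used to define $\CD^\lambda$ in \eqref{Dtldefn} (the scalar Hasse twist $(|\lambda|/2)\kappa_{\ha,\Sigma}$ is a spectator for the final contraction, so the codomain is $\omega^{\kappa+\lambda}\otimes\omega^{(|\lambda|/2)\kappa_{\ha,\Sigma}}$ as claimed). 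The whole point of this construction is that each factor $\Theta_{\Sigma,\tau}$ uses $\Pi_\kappa$ of Theorem \ref{PEP_thm}(2), which contributes exactly one power of $\HA_\Sigma$ independently of the current weight; iterating $e=|\lambda|/2$ times therefore produces exactly $\HA_\Sigma^{|\lambda|/2}$ and no extra power of $\HA$. This is precisely the improvement over Theorem \ref{ANAlambda_thm}, whose surplus factor $\HA^{||\lambda'||}$ arose from a final type-(1) projection $\Pi^{\lambda'}$ on a non-scalar weight.

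The crux of legitimacy is that $\Theta_{\Sigma,\tau}$, and hence $\Pi_\kappa$, can be applied at every intermediate stage, i.e.\ that every weight occurring along the iteration is good and supported at $\Sigma$. By Proposition \ref{prop-sym} the relevant image after $j$ steps lies in $\omega^{\kappa+j\kappa_{\ha,\Sigma}}\otimes\Sym^j\omega_\tau^2$, and by Remark \ref{sym_rem} the constituents of $\Sym^j\omega_\tau^2$ are symmetric weights supported at $\Sigma$. Thus each intermediate constituent has weight $\kappa+j\kappa_{\ha,\Sigma}+\mu$ with $\mu$ symmetric and supported at $\Sigma$. Since $\kappa$ is good, $j\kappa_{\ha,\Sigma}$ is scalar, and $\mu$ is good by the standing hypothesis that all symmetric weights supported at $\Sigma$ are good, each such weight is good: checking place by place, at every $\tau'$ with $\cf(\tau')\neq\min(\cf(\co_{\tau'})_{>0})$ all three summands are scalar, while at the exceptional place goodness imposes no condition. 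Hence $\Pi_\kappa$ is defined at every stage and the composite operator exists on all of $\Shp$; this is the step where the corollary's hypothesis is used in an essential way.

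Finally I would compute the restriction to $\shpmuord$. There $\Theta_{\Sigma,\tau}\equiv\HA_\Sigma\cdot\CD_\tau\pmod p$ by Theorem \ref{ANA_thm}, so the $e$-fold composite restricts to $(\HA_\Sigma\,\CD_\tau)^{\circ e}$; applying the projections then gives $\Theta^\lambda_\Sigma|_{\shpmuord}\equiv\HA_\Sigma^{|\lambda|/2}\,\CD^\lambda\pmod p$ provided $(\HA_\Sigma\,\CD_\tau)^{\circ e}$ collapses to $\HA_\Sigma^{\,e}\,\CD_\tau^{\,e}$. I expect this collapse to be the main obstacle, since $\CD_\tau$ is a differential operator and a priori $\overline{D}_\tau$ differentiates the accumulated factors of $\HA_\Sigma$ by the Leibniz rule, producing cross-terms of the same weight that are not removed by the final projection onto $\lambda$. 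The resolution I would use is that the Hasse invariant is $\nabla$-horizontal modulo $p$: $\HA_\Sigma$ is built from the Frobenius $\phi_\tau$ on the Dieudonné crystal (Lemma \ref{lemma1}), and $\nabla\circ\mathrm{Fr}^\ast\equiv 0\pmod p$, so $\nabla\HA_\Sigma\equiv 0\pmod p$ and $\CD_\tau$ is $\HA_\Sigma$-linear on $\shpmuord$. The careful bookkeeping of these Leibniz contributions through the iterated $\Pi_\kappa$'s, and the verification that horizontality removes them cleanly, is the part of the argument I would expect to require the most care.
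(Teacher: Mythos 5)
Your proposal is correct and takes essentially the same route as the paper: the paper's entire proof of Corollary \ref{allgood_coro} is the single observation that, when all symmetric weights supported at $\Sigma$ are good, one constructs $\Theta^\lambda_\Sigma$ by iterating the operators $\Theta_{\Sigma,\tau}$ of Theorem \ref{ANA_thm}. The details you supply---goodness of every intermediate weight (which is exactly where the corollary's hypothesis enters) and the mod $p$ horizontality of $\HA_\Sigma$ needed so that the iterated restriction collapses to $\HA_\Sigma^{|\lambda|/2}\cdot\CD^\lambda$ rather than producing Leibniz cross-terms---are precisely the steps the paper leaves implicit, and you resolve them correctly.
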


\begin{proof}
When all symmetric weights supported at $\Sigma$ are good, we may  construct the operators $\Theta_\Sigma^\lambda$ by iterating the operators $\Theta_{\Sigma,\tau}$, for $\tau\in\Sigma\cap\Sigma_F$, from Theorem \ref{ANA_thm}.
\end{proof}

Thus, in the setting of \cite{EFGMM}, we recover \cite[Theorem 3.4.1]{EFGMM}.  

\begin{rmk} Given two positive dominant weights $\kappa,\lambda$, if $\kappa$ good, then $\kappa+\lambda$ is good if and only if  $\lambda$ is good. Thus, all symmetric weights supported at $\Sigma$ are good if and only if the weights $\delta(\tau)=\delta_\tau+\delta_\tauu$ are good for all $\tau\in\Sigma\cap\Sigma_F$. 
In particular, in case (A), all weights are good if and only if for every orbit $\co$ in $\T$ there exists an integer $j(\co)\in\ZZ$ such that $\cf(\co)\subseteq\{0,j(\co),n\}.$
For example, all weights are good if the signature is definite at all or all but one real place, and all primes $v|p$ of $F_0$ split in $F$.

Indeed, for any $\tau\in\T$, $\cf(\tau)=\min\left(\cf(\co_\tau)_{>0}\right)$ if and only if $\cf(\tau^*)=\max\left(\cf(\co_\tauu)_{<n}\right)$. Therefore, assuming all weights are good,  if there exists $\tau\in\co$ such that $\cf(\tau)\neq 0,n$ then  $\cf(\co)= \{0,j(\co),n\}$ for $j(\co)=\max\left(\cf(\co)_{<n}\right)=\min\left(\cf(\co)_{>0}\right)$.  The converse also holds.

Note that if $\co=\co^*$ (equivalently, if the prime $v|p$ of $F_0$ is inert in $F$), then $\cf(\co)=\{0,j(\co),n\}$ for some $j(\co)\in\ZZ$ if and only if $n$ is even, and for all $\tau\in\co$ if $\cf(\tau)\neq 0,n$ then $\cf(\tau)=n/2$.

\end{rmk}

\subsection{Proof of Theorem \ref{ANAlambda_thm} on \modp reductions of differential operators}

In general,  the weights $\delta(\tau)$ of the automorphic sheaves $\omega_{\tau}^2$, $\tau\in\Sigma_F$,  are not good for all $\tau\in\Sigma\cap\Sigma_F$, hence the operators $\Theta_{\Sigma, \tau}$ cannot be iterated. Instead, we modify our construction.

\begin{proof}[Proof of Theorem \ref{ANAlambda_thm}]
We treat the two cases $\lambda'=\lambda$ a good weight and $\lambda'=\lambda-\delta(\tau)$ a good weight separately.
We first consider the case where $\lambda'=\lambda $ is good.
Then, the operator $\CD^\lambda$ is constructed by iterating $\CD_\tau$, for $\tau\in\Sigma\cap\Sigma_F$.
Let  
\[\overline{D}^\lambda=\overline{D}^\lambda_\kappa:(\hdrone)^\kappa\to (\hdrone)^\kappa\otimes (\hdrone)^\lambda\]
denote the differential operator obtained by iterating the operators $\overline{D}_{\tau}$.

We define $\Theta_\Sigma^\lambda:=\Theta_\kappa^\lambda:= {\rm pr}_{\kappa,\lambda}\circ (\Pi^{(|\lambda|/2)}_\kappa\otimes\Pi^\lambda)\circ \overline{D}_\kappa^\lambda\circ\iota$,
\[
\omega^\kappa\hookrightarrow
(\hdrone)^\kappa\to (\hdrone)^\kappa\otimes (\hdrone)^\lambda\to
(\omega^\kappa\otimes \omega^{(|\lambda|/2)\kappa_{\ha,\Sigma}})\otimes (\omega^\lambda\otimes \omega^{{||\lambda||}\cdot\kappa_\ha})=\]
\[=(\omega^\kappa\otimes \omega^\lambda)\otimes \omega^{(|\lambda|/2)\kappa_{\ha,\Sigma}+||\lambda||\cdot\kappa_\ha}\to 
\omega^{\kappa+\lambda}\otimes\omega^{(|\lambda|/2)\kappa_{\ha,\Sigma}+||\lambda||\cdot\kappa_\ha},\]
where $\Pi^{(|\lambda|/2)}_\kappa$ denotes the $|\lambda|/2$-times iteration of the map $\Pi_\kappa$.

Then the statement is a consequence 
of Theorem  \ref{PEP_thm}.
Finally, if  $\lambda'=\lambda-\delta(\tau)$ is a good weight,  for some $\tau\in\Sigma$, then $|\lambda|=|\lambda'|+2$ and we define \[\Theta_\Sigma^\lambda:=\Theta_{\Sigma,\tau}\circ \Theta_\Sigma^{\lambda'}.\] 
\end{proof}

\section{New classes of entire mod $p$ differential operators}\label{modpdiff_sec}

\newcommand{\V}{{\mathcal V}}

In this section, 
we construct new entire differential operators $\tth$ over $\Shp$ that, in contrast to the operators in Section \ref{AC-section}, cannot be obtained as (the analytic continuation of) the mod $p$ reduction of $p$-adic Maass--Shimura operators from Section \ref{DiffOpsReview-section}. 
As discussed in Section \ref{AC-section}, good symmetric weights exists only under favorable restrictions on the signature of the Shimura datum; the goal of this section is to construct a new class of weight-raising \modp differential operators,  to which such restrictions do not apply. 
Our starting point is the observation that, for any $\tau\in\CT_F$, the action of Verschiebung  on \modp automorphic forms transfers forms of weight supported at $\tau$ to forms of weight supported at $\tau\circ\sigma^{-1}$.  Hence, by composing the operators $\Theta_\tau$ with appropriate powers of Verschiebung, we obtain \modp differential operators which raise the weight of automorphic forms by  (non-symmetric) good weights,  and can therefore  be iterated and composed without restrictions.

To better explain our construction, we observe that over the $\mu$-ordinary locus $\shpmuord$ twisting by Verschiebung agrees with the \modp reduction of the canonical projection from automorphic sheaves onto their canonical OMOL quotients, that is the OMOL sheaves of the same weights (defined in Proposition \ref{filtration_prop}(3)). Under some restrictions on the weights, we show that the \modp reductions of OMOL sheaves naturally extend to the whole Shimura variety  in positive characteristic, and that  the 
\modp reduction of the differential operators $\underline{\CD}^\lambda$ on them also extends to entire differential operators $\uth^\lambda$. 
The new differential operators $\tth$ are constructed by realizing \modp OMOL sheaves as subsheaves of automorphic sheaves of higher (good) weights.

While our techniques differ, the results in this section generalize some of the results on $\Theta$-operators in \cite[\S4]{DSG}, and \cite[\S 4]{DSG2} (see  Remark \ref{DSG_compare}).

{\bf We assume that there exists $\co\in\mathfrak{O}$ such that $0,n\not\in\cf(\co)$ and $e_\co\geq 2$.}

\subsection{Veschiebung twist}

Let  $\Phi$ denote absolute frobenius on $\Shp$; we write $\CA^{(p)}=\Phi^*\CA$ over $\Shp$, $\nabla=\nabla_{\CA/\Shp}$, and $\nabla^{(p)}=\nabla_{\CA^{(p)}/\Shp}$.
Let \[V:\hcrysone=H^1_{\rm crys}(\CA/\Shp)\to (\hcrysone)^{(p)}=H^1_{\rm crys}(\CA^{(p)}/\Shp)\] denote relative Verschiebung, then 
 $(V\otimes \mathbb{I}) \circ\nabla=\nabla^{(p)}\circ V$. For any $\tau\in\CT_F$, $V$ induces a homomorphism
\[V_\tau:H^1_{{\rm crys},\tau \circ\sigma} \to
(\hcrystau^1)^{(p)}.\]

\begin{lem}\label{Lemma} Let  $\tauco\in\CT_F$ satisfying $\cf(\tauco)=\min(\cf(\co)_{>0})$. For any  $1\leq j\leq  e_\co$.\begin{enumerate}
\item 
$\overline{V^j_\tauco} (
H^1_{{\rm dR},\tauco\circ\sigma^j}
)\subseteq\uo^{(p^j)}_{\tauco}$;
\item  $\overline{\nabla}^{(p^j)}(\uo_\tauco^{(p^j)})\subseteq \uo_\tauco^{(p^j)}\otimes \Omega^1_{\Shp/\overline{\mathbb{F}}_p}$;
\end{enumerate}
\end{lem}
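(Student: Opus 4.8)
The plan is to obtain both containments from two general facts about the mod $p$ de Rham realizations of Frobenius and Verschiebung on $\CA/\Shp$, applied componentwise along the $\CO_F$-grading of $\hdrone$ and then iterated around the orbit $\co$. Neither fact is deep; the work is entirely in the bookkeeping of the eigenspace indices $\tauco\circ\sigma^i$ and the Frobenius twists $(p^i)$.

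For (1), the key input is the general statement that de Rham Verschiebung lands in the Hodge bundle of the Frobenius twist, i.e. $\mathrm{Im}(\overline V)\subseteq\uo^{(p)}$ for $\uo=\uo_{\CA/\Shp}$. I would prove this by duality: the relative Verschiebung isogeny $V_{\CA/\Shp}\colon\CA^{(p)}\to\CA$ is dual to the relative Frobenius $\mathrm{Fr}_{\CA^\vee/\Shp}\colon\CA^\vee\to(\CA^\vee)^{(p)}$, and the latter induces on $\hdrone$ a map killing the Hodge bundle (since $d\,\mathrm{Fr}=0$ in characteristic $p$). Dualizing under the perfect de Rham pairing $\hdrone(\CA^{(p)})\times\hdrone((\CA^\vee)^{(p)})\to\CO_\Shp$, for which $\uo_{\CA^{(p)}}$ and $\uo_{(\CA^\vee)^{(p)}}$ are mutually orthogonal, yields $\mathrm{Im}(\overline V)\subseteq\uo^{(p)}$. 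Because $\overline V$ is $\CO_F$-equivariant, its component is exactly the map $V_\tau\colon H^1_{\mathrm{dR},\,\tau\circ\sigma}\to(\hdrtau^1)^{(p)}$ from the text, and the above gives $\overline{V_\tau}(H^1_{\mathrm{dR},\,\tau\circ\sigma})\subseteq\uo^{(p)}_\tau$ for every $\tau$. Statement (1) then follows by induction on $j$: factoring $\overline{V^j_{\tauco}}=(\overline{V_{\tauco}})^{(p^{j-1})}\circ\cdots\circ\overline{V_{\tauco\circ\sigma^{j-1}}}$, each factor carries the Hodge bundle into the Hodge bundle of the next twist, so the total image lies in $\uo^{(p^j)}_{\tauco}$.

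For (2), I would argue that modulo $p$ the Gauss--Manin connection $\overline\nabla^{(p^j)}=\nabla_{\CA^{(p^j)}/\Shp}$ of the $j$-fold Frobenius twist is the canonical (Cartier) connection. Indeed, by base-change functoriality, $\nabla_{\CA^{(p^j)}/\Shp}=(\Phi^j)^*\nabla$ is the pullback of $\nabla$ along the Frobenius $\Phi^j$; but the induced map $(\Phi^j)^*\Omega^1_{\Shp/\overline{\mathbb{F}}_p}\to\Omega^1_{\Shp/\overline{\mathbb{F}}_p}$ vanishes because $d(x^{p^j})=0$ in characteristic $p$, so the connection matrix of the pullback is zero and the pulled-back local sections are horizontal. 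Hence $\uo^{(p^j)}=(\Phi^j)^*\uo$ is spanned by horizontal sections and is therefore $\overline\nabla^{(p^j)}$-stable; since $\overline\nabla^{(p^j)}$ is $\CO_F$-linear it preserves the grading, so its $\tauco$-component $\uo^{(p^j)}_{\tauco}$ is stable as well, which is precisely the claim. (As a cross-check, one can also combine (1) with the intertwining relation $(V\otimes\mathbb{I})\circ\nabla=\nabla^{(p)}\circ V$, though that route would additionally require controlling the image of $V^j$ and so is less efficient.)

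I expect no genuine obstacle of a conceptual nature: both facts are standard, and the duality argument for (1) is cleaner than the Frobenius-divisibility computation of Lemma \ref{lemma2}. The one thing to execute carefully is the index/twist bookkeeping around the orbit, together with the verification that $V$ and $\nabla$ are $\CO_F$-linear so that they respect the $\tau$-decomposition. It is worth noting that neither containment actually uses the hypotheses $\cf(\tauco)=\min(\cf(\co)_{>0})$ or $0,n\notin\cf(\co)$; these only serve to isolate the component $\tauco$ and the range $1\le j\le e_\co$ that enter the construction of the new theta operators later in the section.
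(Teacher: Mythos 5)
Your proof is correct, but it takes a genuinely different route from the paper's. For part (1), the paper does not argue globally: it reduces the claim to the vanishing of the composite $g\circ\overline{V}^j_{\tauco}$ into $\left(\hdrone/{\rm Fil}^1(\hdrone)\right)^{(p^j)}_\tauco$ and then invokes the density of $\shpmuord$ in $\Shp$ (plus reducedness) to reduce to a check at $\mu$-ordinary geometric points, where the vanishing follows from the explicit structure of the $\mu$-ordinary Dieudonn\'e module and its slope filtration; your duality argument ($V$ dual to $\mathrm{Fr}_{\CA^\vee/\Shp}$, Frobenius killing $1$-forms, and the Hodge bundles being exact annihilators of each other under the de Rham pairing --- you say ``mutually orthogonal,'' which is slightly weaker than what you use, but the exact-annihilator statement is standard) works uniformly over all of $\Shp$ and requires no density or pointwise computation. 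For part (2), the paper uses precisely the route you set aside as less efficient: density of $\shpmuord$ together with the intertwining relation $(\bar{V}^j\otimes\mathbb{I})\circ\overline{\nabla}=\overline{\nabla}^{(p^j)}\circ\bar{V}^j$ and part (1), the needed control of the image of $\bar{V}^j$ over $\shpmuord$ being supplied by the slope-filtration structure (cf.\ Lemma~\ref{lem}); your observation that $\overline{\nabla}^{(p^j)}$ is the canonical (Cartier) connection on $(\Phi^j)^*\hdrone$, so that $\uo^{(p^j)}=(\Phi^j)^*\uo$ is spanned by horizontal sections, is immediate, does not even use part (1), and is arguably the cleanest possible argument. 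What each approach buys: yours is self-contained, global, and strictly stronger --- it proves both containments for every $\tau$ and every $j$, confirming your remark that the hypothesis $\cf(\tauco)=\min(\cf(\co)_{>0})$ plays no role in the containments themselves (in the paper it matters only for the later statements, such as Lemma~\ref{lem}, where the containment becomes an isomorphism over $\shpmuord$); the paper's proof, by contrast, stays inside the $\mu$-ordinary toolkit it has already built (density, Dieudonn\'e theory) and establishes en route the intertwining identity that is reused in Lemma~\ref{above} and Proposition~\ref{compare_prop}.
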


\begin{proof}
For part (1), the statement is equivalent to the vanishing  
of the map \[g\circ\overline{V}^j_{\tauco}:H^1_{{\rm dR},\tauco\circ\sigma^j}\to H_{{\rm dR},\, \tauco}^{1\,(p^j)}\to \left(\hdrone/{\rm Fil}^1(\hdrone)\right)^{(p^j)}_\tauco,\]
where $g$ is the natural projection of $\hdrone$ modulo ${\rm Fil}^1$.
By the density of the $\mu$-ordinary locus, it suffices to prove the vanishing for every $\mu$-ordinary geometric point.

For part (2),  again it suffices to check the statement over the $\mu$-ordinary locus. By the functoriality of the Gauss--Manin connection, we have
\[(\bar{V}^j\otimes \mathbb{I})\circ \overline{\nabla}=\overline{\nabla}^{(p^j)}\circ \bar{V}^j,\]
hence the statement holds at all $\mu$-ordinary geometric points by part (1).
\end{proof}

\subsection{Analytic continuation of \modp OMOL sheaves}
For each $\co\in\mathfrak{O}_F$, we fix  $\tau_\co\in\co$ satisfying $\cf(\tau_\co)=\min(\cf(\co)_{>0})$, and  for any $0\leq j<e_\co$, write \[\V_{j}:=\V_{j, \tau_\co}:=\overline{V}^{j}_{\tau_\co}
:\uo_{\tau\circ \sigma^j}\to\uo^{(p^j)}_{\tau_\co}.\] 
For any $\tau\in\co$, we defined $0\leq j_\tau:=j_{\tau,\tau_\co}<e_\co$ by the equality  $\tau=\tau_\co\circ \sigma^{j_\tau}$.

\begin{remark}

If $\co=\co^*$, then for each $\tau\in\co$,  we have $\tau^*=\tau\circ \sigma^{e/2}\in\co$, for $e=e_\co$, and in particular 
$\tau=\tau_\co\circ\sigma^j$ if and only if $\tau^*=\tau_\co\circ\sigma^{j+e/2}$, for any 
$0\leq j<e$. Hence,  $j_\tauu\equiv j_\tau+e/2\mod e$.
\end{remark}

 \begin{lem}\label{lem}
Let $\co\in\mathfrak{O}$ satisfying $0\not\in\cf(\co)$. For any $1\leq j\leq e_\co$, let $\tau=\tau_0\circ \sigma^j\in\co$,  and consider the restriction to $\shpmuord$ of the homomorphism of $\CO_F\otimes\CO_\Shp$-modules
\[\overline{V}^{j}:(\hdrone)_\tau\to(\hdrone)^{(p^{j})}_{\tau_\co}.\]
Then each of the following statements is true.
\begin{enumerate}
\item\label{lem2}
$\pi_\tauco^{(p^j)} \circ \overline{V}^{j}\vert_{\shpmuord}=\overline{V}^{j}\vert_{\shpmuord}\circ\pi_\tau$.
\item\label{lem1}
The restriction  $\V_{j}:\uo_\tau\to\uo^{(p^j)}_{\tau_\co}$ 
 induces an isomorphism of $\CO_F\otimes\CO_{\shpmuord}$-modules  \[ \gr^1(\uo_{\tau})\simeq \gr^1(\uo_{\tau_\co})^{(p^{j})} = \uo^{(p^{j})}_{\tau_\co}\vert_\shpmuord.\]
\end{enumerate}
\end{lem}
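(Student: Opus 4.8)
The plan is to check both assertions over geometric points of the (reduced) $\mu$-ordinary stratum $\shpmuord$, all of whose points are $\mu$-ordinary, using the unique $\mu$-ordinary $\mathfrak{D}$-enriched \BT group provided by \cite[Theorem 3.2.7]{moonen}, together with its slope filtration, the induced slope filtration on $\uo_\tau$, and the semilinear operators coming from relative Frobenius and Verschiebung. The structural input I would isolate first is the following consequence of the hypotheses $0\notin\cf(\co)$ and $\cf(\tau_\co)=\min(\cf(\co)_{>0})$: using Definition \ref{muNP}, the top slope of $\nu_\co(n,\cf)$ equals $1$ and occurs with multiplicity $\cf(\tau_\co)$ (here $0\notin\cf(\co)$ is exactly what forces the top slope up to $1$), the sheaf $\uo_{\tau_\co}$ is precisely the pure, multiplicative, slope-$1$ part of $(\hdrone)_{\tau_\co}$, so that $\gr^1(\uo_{\tau_\co})=\uo_{\tau_\co}$, and for every $\tau\in\co$ the slope-$1$ part of $(\hdrone)_\tau$ lies in $\uo_\tau$ and coincides with $\gr^1(\uo_\tau)$, of rank $\cf(\tau_\co)$. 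In particular $U_\tau$ and the lower graded pieces $\gr^{\geq 2}(\uo_\tau)$ are supported in slopes $<1$.

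For part (\ref{lem2}) I would first note that Lemma \ref{Lemma}(1) gives $\overline{V}^{j}\bigl((\hdrone)_\tau\bigr)\subseteq\uo^{(p^j)}_{\tau_\co}$ over $\shpmuord$; since $\pi^{(p^j)}_{\tau_\co}$ restricts to the identity on $\uo^{(p^j)}_{\tau_\co}$, this already yields $\pi^{(p^j)}_{\tau_\co}\circ\overline{V}^{j}=\overline{V}^{j}$. Hence the asserted identity is equivalent to $\overline{V}^{j}\circ\pi_\tau=\overline{V}^{j}$, i.e. to the vanishing of $\overline{V}^{j}$ on $\ker\pi_\tau=U_\tau$. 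For this I would combine two constraints: on the one hand $\overline{V}^{j}(U_\tau)\subseteq\uo^{(p^j)}_{\tau_\co}$ is pure of slope $1$, again by Lemma \ref{Lemma}(1) and the structural input above; on the other hand $\overline{V}$ respects the slope filtration and $U_\tau$ is supported in slopes $<1$, so $\overline{V}^{j}(U_\tau)$ is supported in slopes $<1$ as well. These two facts force $\overline{V}^{j}(U_\tau)=0$.

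For part (\ref{lem1}) the same slope-filtration compatibility shows that $\V_j=\overline{V}^{j}\vert_{\uo_\tau}$ annihilates the slope-$<1$ pieces $\gr^{\geq 2}(\uo_\tau)$, and therefore factors through a map $\gr^1(\uo_\tau)\to\uo^{(p^j)}_{\tau_\co}$. Both sides are locally free of rank $\cf(\tau_\co)$ and pure of slope $1$; since relative Verschiebung is an isomorphism on a multiplicative (slope-$1$) part, the induced map is an isomorphism $\gr^1(\uo_\tau)\isomto\uo^{(p^j)}_{\tau_\co}=\gr^1(\uo_{\tau_\co})^{(p^j)}$, as claimed.

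I expect the main obstacle to be the structural input of the first paragraph, rather than the formal manipulations: precisely identifying $\uo_{\tau_\co}$ with the multiplicative slope-$1$ part, identifying $\gr^1(\uo_\tau)$ with the matching rank-$\cf(\tau_\co)$ piece, and verifying that Verschiebung is genuinely invertible there (and not merely slope-preserving). This is where one must appeal to the explicit description of the interaction between the Hodge filtration and the slope filtration for the $\mu$-ordinary \BT group, as in \cite[\S 4.2]{EiMa} (equivalently Moonen's classification), rather than to formal properties of $\overline{V}$ and $\mathrm{Fr}^*$ alone; it is also the one point at which the hypothesis $0\notin\cf(\co)$ is essential.
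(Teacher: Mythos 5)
Your proof is correct and takes essentially the same route as the paper's: both reduce to the explicit description of the $\mu$-ordinary Dieudonn\'e module and its slope filtration (Moonen's classification, as in \cite{EiMa}) at points of $\shpmuord$, where the slope-$1$ (multiplicative) part accounts for $\uo_{\tau_\co}$ and for $\gr^1(\uo_\tau)$, and Verschiebung is invertible there. In particular, your key vanishing $\overline{V}^{j}(U_\tau)=0$ is exactly the inclusion $\overline{V}^{j}\vert_{\shpmuord}(U_\tau)\subseteq U_{\tau_\co}$ that the paper cites as the crux of part (1), combined with Lemma \ref{Lemma}(1).
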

\begin{proof}
The statements follow from the description of the $\mu$-ordinary Dieudonn\'e module and its slope filtration, see \cite[Section 3.1]{EiMa}. In particular, the equality in Equation \eqref{lem2}
follows from the inclusion  $\overline{V}^{j}\vert_{\shpmuord}(U_\tau)\subseteq U_\tauco$.
\end{proof}

We show that, under some restriction on the weights, the \modp reduction of OMOL sheaves extends canonically from $\shpmuord$ to $\Shp$.

The following definition generalizes \cite[Definition 6.3.5]{EiMa} to non-symmetric weights.
  \begin{defi}\label{simple_defi}
We call a positive dominant weight $\kappa$ {\em simple} if it satisfies the following for all $\tau\in\T_F$,  $\co=\co_\tau$: 
\[\text{ if } 0\in\cf(\co_\tau), \text{ then } \kappa_\tau=0;  \text{ and }  \text{ if } 0\not\in\cf(\co_\tau), \text{ then }
\kappa_{i,\tau}=0 \text{ for all } i > \cf(\tau_\co).\]
\end{defi}

\begin{prop}\label{OMOLextend}
For any simple weight $\kappa$,   the restrictions to $\shpmuord$ of the homomorphisms of $\CO_F\otimes\CO_\Shp$-modules $\V_{j,\tau_\co}$, for $\co\in\mathfrak{O}_F$, satisfying $0\not\in\cf(\co)$, and $1\leq j\leq e_\co$, 
 induce an isomorphism of $\CO_F\otimes\CO_{\shpmuord}$-modules  \[ \underline{\uo}^\kappa\simeq \boxtimes_{\tau\in\T_F} \left(\uo^{\kappa_\tau}_{\tau_{\co_\tau}}\right)^{(p^j)}\vert\shpmuord.\]
Under the above identification, we have $\V^\kappa\vert\shpmuord=\varpi^\kappa:\uo^\kappa\to\uuo^\kappa$.
\end{prop}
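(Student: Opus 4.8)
The plan is to factor, over $\shpmuord$, the Verschiebung-twist map $\V^\kappa=\boxtimes_{\tau\in\T_F}\schur_{\kappa_\tau}(\V_{j_\tau})$ through the canonical OMOL projection $\varpi^\kappa$; this single factorization yields both the asserted isomorphism and its identification with $\varpi^\kappa$ at once. Since $\V^\kappa$, the target $\boxtimes_{\tau}(\uo^{\kappa_\tau}_{\tau_{\co_\tau}})^{(p^{j})}$, and the source all decompose as box products over $\tau\in\T_F$, and since $\uuo^\kappa=\boxtimes_\tau\uuo^{\kappa_\tau}$ with $\varpi^\kappa=\boxtimes_\tau\varpi^{\kappa_\tau}$ by Proposition \ref{filtration_prop}, I would first reduce to treating a single place $\tau$. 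For $\tau$ with $0\in\cf(\co_\tau)$ the simple-weight hypothesis (Definition \ref{simple_defi}) forces $\kappa_\tau=0$ and there is nothing to check, so I would fix $\tau$ with $0\not\in\cf(\co_\tau)$ and write $\co=\co_\tau$, $j=j_\tau$, and $\tau_\co$ for the chosen base point of $\co$.

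First I would record the single-place factorization. By Lemma \ref{lem}\eqref{lem1}, over $\shpmuord$ the map $\V_{j}\colon \uo_\tau\to\uo^{(p^j)}_{\tau_\co}$ kills the lower steps of the slope filtration and induces an isomorphism onto the top graded piece, so that $\V_j|_{\shpmuord}$ equals the composite
\[
\uo_\tau\twoheadrightarrow \gr^1(\uo_\tau)\isomto \uo^{(p^j)}_{\tau_\co},
\]
where the first arrow is the canonical surjection onto the top graded piece $\gr^1(\uo_\tau)$ of the slope filtration and the second is the isomorphism of Lemma \ref{lem}\eqref{lem1}. Comparing ranks in this isomorphism gives $\rk\gr^1(\uo_\tau)=\rk\uo_{\tau_\co}=\cf(\tau_\co)=\min(\cf(\co)_{>0})$.

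Next I would apply the Schur functor $\schur_{\kappa_\tau}$. By Definition \ref{simple_defi}, $\kappa$ simple means $\kappa_{i,\tau}=0$ for all $i>\cf(\tau_\co)=\rk\gr^1(\uo_\tau)$, so $\schur_{\kappa_\tau}$ of the standard representation involves only the top graded piece; by construction of the OMOL sheaf (Proposition \ref{filtration_prop}) one then has $\uuo^{\kappa_\tau}=\schur_{\kappa_\tau}(\gr^1(\uo_\tau))$, and $\varpi^{\kappa_\tau}$ is $\schur_{\kappa_\tau}$ of the projection $\uo_\tau\twoheadrightarrow\gr^1(\uo_\tau)$. Applying $\schur_{\kappa_\tau}$ to the displayed composite, and using functoriality of Schur functors together with their compatibility with Frobenius pullback, would give, over $\shpmuord$,
\[
\schur_{\kappa_\tau}(\V_j)=\big(\,\uo_\tau^{\kappa_\tau}\xrightarrow{\ \varpi^{\kappa_\tau}\ }\uuo^{\kappa_\tau}\isomto \big(\uo_{\tau_\co}^{\kappa_\tau}\big)^{(p^j)}\,\big),
\]
the middle isomorphism being $\schur_{\kappa_\tau}$ of the isomorphism from Lemma \ref{lem}\eqref{lem1}. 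Box-multiplying over $\tau\in\T_F$, the box product of the middle isomorphisms produces the asserted isomorphism $\uuo^\kappa\simeq\boxtimes_\tau(\uo_{\tau_\co}^{\kappa_\tau})^{(p^j)}|_{\shpmuord}$ induced by the maps $\V_{j,\tau_\co}$, while the box product of the factorizations identifies $\V^\kappa|_{\shpmuord}$ with $\varpi^\kappa$ followed by that isomorphism, which is exactly the second assertion.

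The step I expect to be the main obstacle is the identification, in the previous paragraph, of $\schur_{\kappa_\tau}$ of the slope-filtration projection with the canonically defined projection $\varpi^{\kappa_\tau}$ of Proposition \ref{filtration_prop}. This rests on the compatibility between forming the Schur functor and passing to the filtration it induces: for a simple weight the filtration on $\uo_\tau^{\kappa_\tau}$ obtained from the slope filtration on $\uo_\tau$ has top graded quotient exactly $\schur_{\kappa_\tau}(\gr^1(\uo_\tau))$, so that $\varpi^{\kappa_\tau}$ is literally the natural surjection onto this quotient. All of this takes place over $\shpmuord$, where the slope filtration and the isomorphisms of Lemma \ref{lem} are available from the explicit description of the $\mu$-ordinary Dieudonn\'e module in \cite[Section 3.1]{EiMa}; the remaining care is to ensure that the factorization of each $\V_j$ through $\gr^1(\uo_\tau)$ holds as a morphism of sheaves and not merely at geometric points, which Lemma \ref{lem} already provides.
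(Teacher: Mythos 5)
Your proposal is correct and follows essentially the same route as the paper: the paper's (much terser) proof consists exactly of the observation that simplicity of $\kappa$ gives $\underline{\uo}^\kappa=\gr^1(\uo)^\kappa$, after which everything follows from Lemma \ref{lem}\eqref{lem1}, which is precisely your factorization of $\V_j\vert_{\shpmuord}$ through the projection onto $\gr^1(\uo_\tau)$ followed by Schur functoriality. The extra details you supply (the placewise reduction, the rank count, and the compatibility of $\varpi^{\kappa_\tau}$ with the Schur functor of the slope-filtration projection) are exactly what the paper leaves implicit.
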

\begin{proof}
By definition, if $\kappa$ is simple then the associated OMOL sheaf satisfies \[\underline{\uo}^\kappa=\gr^1(\uo)^\kappa,\] 
and  the statement  follows from Lemma \ref{lem}\eqref{lem1}.
\end{proof}
 
In the following, for any simple weight $\kappa$, we use the above identification to extend  from $\shpmuord$ to $\Shp$ the OMOL sheaf $\underline{\uo}^\kappa$, and the canonical projection 
$\varpi^\kappa:\uo^\kappa\to\uuo^\kappa$.

  \begin{remark}\label{hassehere}
Let $\co$ satisfying $0\not\in\cf(\co)$,  and write $e=e_\co$. Then, $\uo_\tauco\vert_\shpmuord=\uuo_\tauco$ and the isomorphism $
\uuo_\tauco\simeq\uo_\tauco^{(p^{e})}\vert_\shpmuord$ naturally extends over $\Shp$ to
the map $
\V_{e_\co} : \uo_\tauco\to \uo_\tauco^{(p^{e_\co})}$. By definition, $\V_{e_\co}$ agrees with the map $\tilde{h}_\tauco$ from Section \ref{vwcase-section}; in particular, its composition with the adjugate  $^*\tilde{h}_\tauco:\uo_\tauco^{(p^{e_\co})}\to\uo_\tauco\otimes |\uo_\tauco|^{p^{e_\co}-1}$ agrees with multiplication by the $\tauco$-Hasse invariant $\HA_\tauco$.
  \end{remark}

  \subsection{Analytic continuation of \modp differential operators on OMOL sheaves}
We prove that the \modp reduction of the differential operators $\underline{\CD}^\lambda$ on OMOL sheaves also extends from $\shpmuord$ to $\Shp$.  

\begin{rmk}
Simple symmetric weight exist if and only if there exists $\co\in\mathfrak{O}$ such that $0,n\not\in\cf(\co)$.
Indeed,
if $\lambda$ is a simple symmetric weight, and $\lambda_\tau\not=0$, then $0,n\not\in\cf(\co_\tau)$.
Also, for any $\ttau\in\Sigma_F$, the basic symmetric weight $\delta(\ttau)$ is simple if and only if $0,n\notin\cf(\co_\ttau)$.
\end{rmk}

\begin{lem}\label{above}
Let $\ttau\in\Sigma_F$. For any simple weight $\kappa$,  there is a differential operator 
\[\uth_\ttau=\uth_{\kappa,\ttau}:\uuo^\kappa\to\uuo^\kappa\otimes\uo^2_\ttau.\]
If $0,n\not\in \cf(\co_\ttau)$ then \[(\mathbb{I}\otimes \varpi_\ttau)\circ \uth_\ttau\vert\shpmuord\equiv  
\underline{\CD}_\ttau\mod p,\]
where $\varpi_\ttau:=\V_{j_\ttau}\otimes\V_{j_{\ttau^*}}:\uo_\ttau^2=\uo_\ttau\otimes\uo_{\ttau^*}\to\uuo^2_\ttau=\uuo_\ttau\otimes\uuo_{\ttau^*}$. 
\end{lem}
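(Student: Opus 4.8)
The plan is to construct $\uth_\ttau$ in close analogy with the operator $\Theta_\ttau$ of Theorem~\ref{ANA_thm}, but now on the Verschiebung realization of the OMOL sheaf furnished by Proposition~\ref{OMOLextend}, so that the role played there by multiplication by a partial Hasse invariant (compensating for the non-extension of the unit-root projection $\pi_U$) is instead played by the automatic connection-stability of the Frobenius-twisted Hodge piece recorded in Lemma~\ref{Lemma}(2). Concretely, for a simple weight $\kappa$ we have the identification $\uuo^\kappa=\boxtimes_{\tau}(\uo_{\tau_\co}^{(p^{j_\tau})})^{\kappa_\tau}$ over all of $\Shp$, so it suffices to define $\uth_\ttau$ on each basic factor $\uo_\tauco^{(p^{j})}\subseteq(\hdrone)^{(p^{j})}$ and then extend by the Leibniz rule and $\kappa$-Schur functoriality. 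On a basic factor I would apply the $\ttau$-component $\overline\nabla^{(p^{j})}_\ttau$ of the mod $p$ Gauss--Manin connection of $\CA^{(p^{j})}$; by Lemma~\ref{Lemma}(2), together with the embedding bookkeeping of Lemma~\ref{nablasigmatau}, the cohomology factor remains inside $\uo_\tauco^{(p^{j})}$, while the coefficient one-form lands in $\Omega^1_{\Shp}\cong\uo^2$ through the \emph{untwisted} Kodaira--Spencer isomorphism \eqref{ks-defn}; taking its $\ttau$-component produces a map $\uo_\tauco^{(p^{j})}\to\uo_\tauco^{(p^{j})}\otimes\uo^2_\ttau$. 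Since $\uo_\tauco^{(p^{j})}$, the connection $\overline\nabla^{(p^{j})}$, and $\ks$ are all defined over the whole of $\Shp$, the resulting $\uth_\ttau$ is entire; crucially no unit-root projection is required, and because these are partial twists with $j<e_\co$, no Hasse factor intervenes on the $\uuo^\kappa$ side (in contrast with Remark~\ref{hassehere}).

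It then remains to verify, under the hypothesis $0,n\notin\cf(\co_\ttau)$ (which guarantees that $\delta(\ttau)$ is simple and that $\uo_\ttau$ and $\uo_{\ttau^*}$ carry the expected top graded pieces, so that $\varpi_\ttau=\V_{j_\ttau}\otimes\V_{j_{\ttau^*}}$ is the correct comparison map), that $(\mathbb{I}\otimes\varpi_\ttau)\circ\uth_\ttau$ agrees with $\underline{\CD}_\ttau$ modulo $p$ over $\shpmuord$. I would carry this out by transporting $\underline{\CD}_\ttau=\gr(\pi_U\circ\nabla_\ttau)$ through the isomorphism $\V^\kappa|_\shpmuord=\varpi^\kappa$ of Proposition~\ref{OMOLextend}: the intertwining $\pi_\tauco^{(p^{j})}\circ\overline V^{j}=\overline V^{j}\circ\pi_\tau$ of Lemma~\ref{lem}\eqref{lem2} identifies the unit-root projection $\pi_\tau$ with the projection onto $\uo_\tauco^{(p^{j})}$ under Verschiebung, and the functoriality $(\overline V^{j}\otimes\mathbb{I})\circ\overline\nabla=\overline\nabla^{(p^{j})}\circ\overline V^{j}$ matches the two differentiations. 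Combining these with the slope-filtration description of the $\mu$-ordinary Dieudonn\'e module (\cite[Section 3.1]{EiMa}) gives the desired equality of maps at every $\mu$-ordinary geometric point, and a density argument upgrades it to an identity of $\CO_\shpmuord$-module homomorphisms, the $\uo^2_\ttau$-factor being reconciled by $\varpi_\ttau$.

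The hard part will be the global step: showing that $\overline\nabla^{(p^{j})}_\ttau$ genuinely preserves the extended sheaf $\uuo^\kappa$ over all of $\Shp$, and not merely over $\shpmuord$. This is exactly the characteristic $p$ Hodge--de Rham splitting difficulty that motivates Section~\ref{HD-section}, and here it is resolved not by a Hasse factor but by Lemma~\ref{Lemma}(2), whose proof itself descends from the dense $\mu$-ordinary locus. I expect the most delicate bookkeeping to be the reconciliation of the two Kodaira--Spencer normalizations—the twisted connection outputs a coefficient in $\Omega^1_{\Shp}$ that must be read through the untwisted $\ks$ to land in the genuine automorphic sheaf $\uo^2_\ttau$, so that applying $\varpi_\ttau$ recovers the OMOL target $\uuo^2_\ttau$ of $\underline{\CD}_\ttau$—together with checking that the Leibniz and Schur extension to an arbitrary simple $\kappa$ is compatible with the factorwise construction. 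Once the comparison with $\underline{\CD}_\ttau$ is secured over $\shpmuord$, the entirety of $\uth_\ttau$ over $\Shp$ is immediate from the construction.
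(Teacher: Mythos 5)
Your proposal is correct and follows essentially the same route as the paper: the paper defines $(\uth_\ttau)_\tau=(\mathbb{I}\otimes\ks^{-1}_\ttau)\circ\bar\nabla^{(p^j)}_\tauco$ on each basic factor $\uuo_\tau=\uo_\tauco^{(p^j)}$ (made possible globally by Lemma \ref{Lemma}(2)), extends by the Leibniz rule, and then verifies the congruence with $\underline{\CD}_\ttau$ over $\shpmuord$ via a commutative diagram built from exactly the ingredients you cite — Lemma \ref{lem}, the Verschiebung--connection functoriality $(\bar V^j\otimes\mathbb{I})\circ\overline\nabla=\overline\nabla^{(p^j)}\circ\bar V^j$, and the slope-filtration description of the $\mu$-ordinary Dieudonn\'e module. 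Your identification of Lemma \ref{Lemma}(2) as the substitute for a Hasse-invariant correction, and of the Kodaira--Spencer normalization as the delicate bookkeeping point, matches the paper's argument.
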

\begin{proof}
For any $\co\in\mathfrak{O}_F$ satisfying $0\not\in\cf(\co)$, and $1\leq j\leq e_\co$, let $\tau=\tauco\circ\sigma^j$. We define the operator $\uth_\ttau$ by the Leibniz rule starting from the operators
\[(\uth_\ttau)_\tau=(\underline{D}_\ttau^{(p^j)})_\tauco= (\mathbb{I}\otimes \ks^{-1}_\ttau)\circ \bar{\nabla}^{(p^j)}_\tauco\]\[\uuo_\tau=\uo_\tauco^{(p^j)}\to \uo_\tauco^{(p^j)}\otimes  \Omega^1_{\shp/\overline{\mathbb F}_p}\to \uo_\tauco^{(p^j)}\otimes \uo_\ttau^2=\uuo_\tau\otimes \uo_\ttau^2.\] 

The following commutative diagram show that if $0,n\not\in\cf(\co_\ttau)$ then the operator $\uth_\ttau$ satisfies the congruence $(\mathbb{I}\otimes \varpi_\ttau)\circ \uth_\ttau\vert\shpmuord\equiv
\underline{\CD}_\ttau\mod p.$

\xymatrix{
 \uo_{\tau}\ar[r]_>>>>>>{({D}_\ttau)_\tau}\ar[dd]_\varpi\ar@/^2pc/[rr]^{\CD_\ttau} \ar@/_2pc/[dddd]_{\V_j}&  
 (\hdrone)_{\tau}\otimes\uo^2_\ttau \ar[r]_{\pi_\tau\otimes\mathbb{I}} \ar[dd]_{\varpi\otimes\mathbb{I}}\ar[rdddd]_>>>>>>>>>>>>>>>{\overline{V}^j_\tauco\otimes\mathbb{I}}
 & 
 \uo_{\tau}\otimes\uo^2_\ttau \ar[r]_{\mathbb{I}\otimes\varpi_\ttau}  \ar[dd]^{\varpi\otimes\mathbb{I}} &
  \uo_{\tau}  \otimes\uuo^2_\ttau   \ar[dd]_{\varpi\otimes\mathbb{I}}\ar@/^3pc/[dddd]^{\V_j\otimes
  \mathbb{I}}
  \\
  &&&\\
\gr^1(  \uo_{\tau})\ar[r]_>>>>>{(\underline{D}_\ttau)_\tau}\ar[dd]_{\simeq} \ar@/^2pc/[rrr]^{\underline{\CD}_\ttau}&
  \gr^1(\hdrone)_{\tau}\otimes\uo^2_\ttau\ar@{=}[r]
  & 
  \gr^1(\uo_{\tau})\otimes\uo^2_\ttau \ar[r]_{\mathbb{I}\otimes\varpi_\ttau} \ar[dd]^{ \simeq }& 
  \gr^1(\uo_{\tau})  \otimes\uuo^2_\ttau\ar[dd]_{\simeq } &
  \\
    &&&\\
 \uo_{\tau_\co}^{(p^j)}\ar[r]^{{D}^{(p^j)}_\ttau\quad} \ar@/_2pc/[rr]_{\uth_{ \ttau}}&  
 (\hdrone)_{\tau_\co}^{(p^j)}\otimes\uo^2_\ttau 
 &
 \uo_{\tau_\co}^{(p^j)}\ar@{_(->}[l]
 \otimes\uo^2_\ttau \ar[r]_{\mathbb{I}\otimes\varpi_\ttau}
 & 
 \uo_{\tau_\co}^{(p^j)} 
  \otimes\uuo^2_\ttau
 \\
}
\end{proof}

\begin{thm} \label{OMOLdiffextend}
Let $\Sigma\subseteq \T_F$ satisfying $0\not\in\cf(\co_\tau)$ for all $\tau\in\Sigma$, and let $\lambda$ be a simple symmetric weight supported at $\Sigma$.
Then for any simple weight $\kappa$ supported at $\Sigma$, there is a differential operator 
 \[\uth_\Sigma^\lambda :=\uth_{\kappa}^\lambda: \underline{\uo}^\kappa\to \underline{\uo}^{\kappa+{\lambda} }.\]
satisfying 
\[\uth_\Sigma^\lambda\vert\shpmuord\equiv 
\underline{\CD}^\lambda\mod p.\]

\end{thm}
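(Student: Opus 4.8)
The plan is to build $\uth^\lambda_\Sigma$ by iterating the basic operators $\uth_\ttau$ of Lemma \ref{above}, in exact parallel to the assembly of the $p$-adic operator $\underline{\CD}^\lambda$ from the $\underline{\CD}_\ttau$ recalled in Section \ref{padicdiffop}. Since $\lambda$ is symmetric, simple, and supported at $\Sigma$, it is admissible of depth $|\lambda|/2$ and, by Remark \ref{sym_rem}, occurs with multiplicity one as a constituent of $\boxtimes_{\ttau\in\Sigma\cap\Sigma_F}\Sym^{e_\ttau}(V^2_\ttau)$ for suitable integers $e_\ttau\geq 0$ with $\sum_\ttau e_\ttau=|\lambda|/2$; each such $\ttau$ carries $\lambda_\ttau\neq 0$ and hence satisfies $0,n\not\in\cf(\co_\ttau)$, so the congruence clause of Lemma \ref{above} is available for $\uth_\ttau$. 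First I would iterate the directional building blocks underlying the $\uth_\ttau$, applying the one at $\ttau$ a total of $e_\ttau$ times, to obtain a map out of $\uuo^\kappa$; composing with the maps $\varpi_\ttau$ and projecting onto the $\uuo^\lambda$-component, followed by the canonical projection $\uuo^\kappa\otimes\uuo^\lambda\to\uuo^{\kappa+\lambda}$, then yields $\uth^\lambda_\Sigma:\uuo^\kappa\to\uuo^{\kappa+\lambda}$. Here $\kappa+\lambda$ is again simple, since simplicity is a condition on the support of each $\kappa_\tau$ that is stable under addition, so $\uuo^{\kappa+\lambda}$ is indeed defined.

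The crux — and the step I expect to be the main obstacle — is to verify that this iteration is well-defined over the whole of $\Shp$, not merely over $\shpmuord$. Each building block is, as in the proof of Lemma \ref{above}, the Leibniz extension of $(\mathbb{I}\otimes\ks^{-1}_\ttau)\circ\overline{\nabla}^{(p^j)}_\tauco$ acting on the Frobenius-pullback factors $\uuo_\tau$, identified with $\uo^{(p^j)}_{\tau_\co}$. The stability that lets one apply a second block without leaving the OMOL sheaves is precisely Lemma \ref{Lemma}(2), namely $\overline{\nabla}^{(p^j)}(\uo^{(p^j)}_{\tau_\co})\subseteq\uo^{(p^j)}_{\tau_\co}\otimes\Omega^1_{\Shp/\Fbar_p}$, which is asserted over all of $\Shp$ (proved there by density of the $\mu$-ordinary locus). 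This horizontality plays exactly the role that the $\nabla$-horizontality of the submodule $U$ (Proposition \ref{Usubmodule-section}(2)) plays in iterating the $p$-adic operators $\CD_\tau$: it guarantees that the connection preserves each relevant subsheaf, including the accumulated factors $\uuo^2_\ttau$, which under the identification of Proposition \ref{OMOLextend} are themselves Frobenius pullbacks of the form $\uo^{(p^\cdot)}_{\tau_\co}$. I would also record, analogously to Lemma \ref{tau-commute} and using the flatness of $\overline{\nabla}$, that the blocks at distinct places commute, so that the iteration and the ensuing projection onto the symmetric weight $\lambda$ are unambiguous.

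Finally I would check the congruence $\uth^\lambda_\Sigma\vert\shpmuord\equiv\underline{\CD}^\lambda\mod p$. Over $\shpmuord$, Lemma \ref{above} gives $(\mathbb{I}\otimes\varpi_\ttau)\circ\uth_\ttau\vert\shpmuord\equiv\underline{\CD}_\ttau$ for each block, and $\underline{\CD}^\lambda$ is obtained by iterating the $\underline{\CD}_\ttau$ with the same multiplicities $e_\ttau$ and projecting onto weight $\lambda$ (Section \ref{padicdiffop}). Since $\uth^\lambda_\Sigma$ is assembled from the $\uth_\ttau$ by the identical combinatorial recipe, and since the commutative diagram in the proof of Lemma \ref{above} matches the two constructions factor by factor, the two operators agree modulo $p$ on $\shpmuord$. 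Once the well-definedness over $\Shp$ of the preceding paragraph is in hand, this final comparison is a formal consequence of Lemma \ref{above}.
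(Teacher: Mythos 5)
Your proposal is correct and is essentially the paper's own proof: the paper defines
$\uth_\Sigma^\lambda := {\rm pr}_{\kappa,\lambda}\circ(\mathbb{I}\otimes{\rm pr}_{\lambda})\circ\uth_{\Sigma,\ttau_1}^{|\lambda_{\ttau_1}|}\circ\cdots\circ\uth_{\Sigma,\ttau_d}^{|\lambda_{\ttau_d}|}$
by iterating the operators $\uth_{\Sigma,\ttau}=(\mathbb{I}\otimes\varpi_\ttau)\circ\uth_{\kappa,\ttau}$ of Lemma \ref{above} with exactly your multiplicities ($|\lambda_\ttau|$, your $e_\ttau$), invokes Lemma \ref{tau-commute} for independence of the ordering of $\Sigma_F$, and deduces the congruence with $\underline{\CD}^\lambda$ just as you do, from the congruences satisfied by the basic operators. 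The one point to tighten is one your second paragraph already implicitly fixes: the twist $\varpi_\ttau$ must be interleaved into every step of the iteration, so that the accumulated factors are the sheaves $\uuo_\ttau^2$, which by Proposition \ref{OMOLextend} and Remark \ref{hassehere} are Frobenius pullbacks $\uo^{(p^j)}_{\tauco}$ with $j\geq 1$ on which Lemma \ref{Lemma}(2) gives $\overline{\nabla}$-stability, rather than applied only after all the raw blocks as your first paragraph literally states, since $\overline{\nabla}$ does not preserve the untwisted factors $\uo_\ttau^2$ (the nonvanishing of the Kodaira--Spencer map is precisely the obstruction), so the intermediate iterates in that reading would not be defined over $\Shp$.
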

\begin{proof}

For any simple weight $\kappa$ supported at $\Sigma$, and any $\ttau\in\Sigma_F$ satisfying $0,n\notin\cf(\co_\ttau)$,  consider the differential operators 
\[\uth_{\Sigma,\ttau}=(\mathbb{I}\otimes\varpi_\ttau)\circ \uth_{\kappa,\ttau}:\uuo^\kappa\to\uuo^\kappa\otimes 
\uuo_\ttau^2\]
defined in Lemma \ref{above}.

For any simple symmetric weight $\lambda$ supported at $\Sigma$, we define $\uth_\Sigma^\lambda=\uth^\lambda_\kappa$ by iterating and composing the operators $\uth_{\Sigma,\ttau}$, for $\ttau\in\Sigma_F\cap\Sigma$.
That is, we define
\newcommand{\pr}{{\rm pr}}
\[\uth_\Sigma^\lambda=\pr_{\kappa,{\lambda}}\circ (\mathbb{I}\otimes\pr_{{\lambda}})\circ \uth_{\Sigma,\ttau_1}^{|\lambda_{\ttau_1}|}\circ \cdots\circ \uth_{\Sigma,\ttau_{d}} ^{|\lambda_{\ttau_{d}}|},\]
\[ \uuo^\kappa\to \uuo^\kappa\otimes
\left(\otimes_{\ttau\in\Sigma_F}  
 (\uuo_\ttau^2)^{\otimes|\lambda_\tau|}\right)\to\uuo^\kappa
 \otimes \uuo^{{\lambda}}\to
 \uuo^{\kappa+{\lambda}}
 ,\]
 for any choice of an ordering of the set $\Sigma_F$, $d=[F_0:{\mathbb Q}]$. Lemma \ref{tau-commute} implies that the operator $\uth^\lambda$ is independent of such a choice.  
We observe that, by construction, the operator $\uth_\Sigma^\lambda$ satisfies the given congruence,  as a consequence of the congruences satisfied by the operators $(\mathbb{I}\otimes\varpi_\ttau)\circ\uth_{\ttau}$.
\end{proof}

\subsubsection{The special case of good simple weights}

For weights $\kappa$ which are both good and simple, the OMOL sheaves $\uuo^\kappa$ agree with the restriction of the automorphic sheaves $\uo^\kappa$ over $\shpmuord$, and  for all $\ttau\in\Sigma_F$, the operators $\Theta_{\kappa, \ttau}$ and $\uth_{\kappa,\ttau}$ both extend the mod $p$ reduction of the basic Maass--Shimura differential operators $\CD_{\kappa,\ttau}$ on $\uo^\kappa$. 
We compare the two constructions.

Set  $\Upsilon=\{\tau_\co\in\co| \co\in\mathfrak{O} \text{ satisfying } 0\not\in\cf(\co)\}$, which we regard also as $\Upsilon\subseteq \mathfrak{O}$.

\begin{rmk}\label{rmkabove}
If $\kappa$ is a weight supported at $\Upsilon$, 
then $\kappa$ is good and simple. 
Vice versa, if a weight $\kappa$ is good and simple, then $\kappa$ is supported at $\{\tau\in\T_F| \,0\not\in\cf(\co_\tau) \text{ and } \cf(\tau)=\cf(\tau_\co)\}$. 
\end{rmk}

For $\kappa$ any weight supported at $\Upsilon$, we write
${^*\varpi}^\kappa$ for  the adjugate of $\varpi^\kappa: \uo^\kappa\to\uuo^\kappa$, and we have
\[\HA^{r(\kappa)-1}=\prod_{\tauco\in\Upsilon} \HA_{\tauco}^{\max(r(\kappa_\tauco)-1,0)}.\] We refer to Definition \ref{determinantpower} for the notation $r(\kappa)\in\ZZ^{|\T|}$.
\begin{prop}\label{compare_prop}
Let $\Upsilon_0\subseteq\Upsilon$, and $\kappa$ be a weight supported at $\Upsilon_0$. 
For any $\ttau\in\Sigma_F$, 
\[\HA^{r(\kappa)-1}\cdot\Theta_{\Upsilon_0, \ttau}=({^*\varpi}^\kappa\otimes \mathbb{I})\circ \uth_{\ttau}\circ \varpi^\kappa.\]
Furthermore, if $\ttau\in\Sigma_F$ satisfies $0,n\not\in\cf(\co_\ttau)$, then
\[\HA^{r(\kappa)-1}\cdot (\mathbb{I}\otimes \varpi_{\ttau})\circ \Theta_{\Upsilon_0, \ttau}=({^*\varpi}^\kappa\otimes \mathbb{I})\circ \uth_{\Upsilon_0,\ttau}\circ \varpi^\kappa.
\]
\end{prop}
\begin{proof} Let $\ttau\in\Sigma_F$.
 For each $\tauco\in \Upsilon$,  
 the commutativity of the diagram in the proof of Lemma \ref{above} implies \[\Theta_{\tauco, \ttau}=({^*\varpi}_\tauco\otimes \mathbb{I})\circ \uth_{\ttau}\circ \varpi_\tauco,\]
\[\omega_\tauco\to 
\underline{\uo}_\tauco=\uo_\tauco^{(p^e)}\to
\uo_\tauco^{(p^e)}\otimes \omega^2_\ttau\to
\uo_\tauco\otimes|\uo_\tauco|^{p^e-1}\otimes \uo_\ttau^2,\]
where by definition $\varpi_\tauco= 
\tilde{h}_\tauco$, and  ${^*\tilde{h}}_\tauco\circ {\tilde{h}}_\tauco$ is multiplication by $\HA_\tauco$ (see Remark \ref{hassehere}).
Furthermore, if $\ttau\in\Sigma_F$ satisfies $0,n\not\in\cf(\co_\ttau)$, composition with the map $(\mathbb{I}\otimes \varpi_{\ttau})$ yields
\[(\mathbb{I}\otimes \varpi_{\ttau})\circ \Theta_{\tauco, \ttau}=({^*\varpi}_\tauco\otimes \mathbb{I})\circ \uth_{\tauco, \ttau}\circ \varpi_\tauco.\]

For $\kappa$ any weight supported as $\Upsilon_0$,  we deduce the statement from the above equalities
by comparing the construction of the operators $\Theta_{\Upsilon_0,\ttau}$ and $\uth_\ttau, \uth_{\Upsilon_0,\ttau}$ on $\uo^\kappa$, 
and observing that ${\varpi}^\kappa= {\tilde{h}}^\kappa$ and
 ${^*\tilde{h}}^\kappa\circ {\tilde{h}}^\kappa$ is multiplication by $\HA^{r(\kappa)}$.
 \end{proof}

\subsection{A new class of entire \modp differential operators}\label{last_sec}
 We conclude by introducing a new class of weight-raising \modp differential operators $\tth^\lambda$ on \modp automorphic forms. 
These operators are obtained by iterating and composing basic differential operators $\tth_\ttau$ which are defined by composing the operators $\Theta_\ttau$ with the projections $\varpi_\ttau:\omega^2_\ttau\to\uuo_\ttau^2$, for any $\ttau\in\Sigma_F$ satisfying $0,n\not\in\cf(\co_\ttau)$.  In order to iterate and compose the operators $\tth_\ttau$, we observe that the OMOL sheaves $\uuo_\ttau^2$ also arise as subsheaves of automorphic sheaves of higher good weights.

For each $\co \in\mathfrak{O}$  satisfying  $0,n\not\in\cf(\co)$, we choose $\tauco\in\co$ such that $\cf(\tauco)=\min\cf(\co)$, and 
set  \begin{equation}\label{upsilon_def}
\Upsilon=\{\tau_\co| \co\in\mathfrak{O} \text{ satisfies } 0,n\not\in\cf(\co)\}.\end{equation} 

\begin{defi}\label{twistweight_defi}
For any simple symmetric weight $\lambda$, $\lambda=(\lambda_\tau)_{\tau\in\T_F}$, we define the {\em $\Upsilon$-twist } $\tlambda=\tlambda^\Upsilon$ of $\lambda$ by
\[\tlambda_\tau=0 \text{ if } \tau\not\in\Upsilon, \text{ and }\tlambda_{\tau}=\sum_0^{e_\co-1} p^j\lambda_{\tauco\circ\sigma^j} \text{ if } \tau=\tauco\in\Upsilon.\]
By definition, the weight $\tlambda^\Upsilon $ is supported at $\Upsilon$, and hence it is good and simple. Note that $\tlambda^\Upsilon $ is not symmetric.
We write $\tilde{\delta}(\ttau)$ for the $\Upsilon$-twist of $\delta(\ttau)$, 
for $\ttau\in\Sigma_F$.
\end{defi}

For any $\CO_\Shp$-module $\mathcal{F}$, we write \[f:\mathcal{F}^{(p)}\to{\rm Sym}^{p}\mathcal{F}\] for the natural morphism of $\CO_\Shp$-module.  By abuse of notation, we also write $f$ for the induced morphisms $\mathcal{F}^{(p^j)}\to{\rm Sym}^{p^j}\mathcal{F}$, for  $j\geq 1$.

For any $\ttau\in\Sigma_F$ satisfying $ 0,n\not\in\cf(\co_\ttau)$,
 let 
\[p_\ttau=f\circ \varpi_\ttau:\uo^2_\ttau=\uo_\ttau\otimes\uo_{\ttau^*}\to\uuo_\ttau^2=\uo^{(p^{j})}_\tauco\otimes\uo^{(p^{j^*})}_{\tau_{\co^*}} \to  
\Sym^{p^{j}}(\uo_\tauco)\otimes \Sym^{p^{j^*}}(\uo_{\tau_{\co^*}}),\]
where $\co=\co_\ttau$. For $\ttau=\tauco$ (resp. $\ttau=\tau_{\co^*}$), set $j=0$ (resp. $j^*=0$).

If $\co\not=\co^*$, the sheaf $\Sym^{p^{j}}(\uo_\tauco)\otimes  
\Sym^{p^{j^*}}(\uo_{\tau_{\co^*}})$ is an automorphic sheaf, and its weight is $\tilde{\delta}(\ttau)$.
If $\co=\co^*$, by abuse of notation we still denote by $p_\ttau$ its composition with the natural morphism \[\Sym^{p^{j}}(\uo_\tauco)\otimes  
\Sym^{p^{j^*}}(\uo_{\tau_{\co^*}})=\Sym^{p^{j}}(\uo_\tauco)\otimes  
\Sym^{p^{e/2+j}}(\uo_{\tau_{\co}})\to\Sym^{p^j(1+p^{e/2})} (\uo_\tauco);\] the sheaf $\Sym^{p^j(1+p^{e/2})} (\uo_\tauco)$ is an automorphic sheaf, and its weight  is
${\tdelta(\ttau)}$.

\begin{lem}\label{twistpr}
For any simple symmetric weight $\lambda$,  ${\rm pr}_{\tlambda}: \otimes_{\tauco\in\Upsilon} (\uo_\tauco)^{\otimes |\tlambda_\tauco|}\to\uo^{\tlambda}$ factors via the homomorphism
$ \otimes_{\tauco\in\Upsilon} (\uo_\tauco)^{\otimes |\tlambda_\tauco|}\to\otimes_{\ttau\in\Sigma_F} (\uo^{\tdelta(\ttau)})^{\otimes |\lambda_\ttau|}$.
\end{lem}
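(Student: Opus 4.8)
The plan is to prove this as a statement about Schur functors of the locally free sheaf $\uo$ over $\Shp$, independent of the $\mu$-ordinary locus. First I would reduce to a single representative. Since $\tlambda$ is supported at $\Upsilon$, we have $\uo^{\tlambda}=\boxtimes_{\tauco\in\Upsilon}\uo_\tauco^{\tlambda_\tauco}$ and correspondingly ${\rm pr}_{\tlambda}=\boxtimes_{\tauco}{\rm pr}_{\tlambda_\tauco}$. Regrouping the tensor factors of $\otimes_{\ttau}(\uo^{\tdelta(\ttau)})^{\otimes|\lambda_\ttau|}$ according to the representative they involve — noting that for $\co\neq\co^*$ the factor $\uo^{\tdelta(\ttau)}=\Sym^{p^{j_\ttau}}(\uo_\tauco)\otimes\Sym^{p^{j_{\ttau^*}}}(\uo_{\tau_{\co^*}})$ splits into the two distinct orbits $\co,\co^*$ of $\Upsilon$ — the intermediate homomorphism becomes, on each $\tauco$-block, the tensor of symmetrization projections onto $\bigotimes_{\tau\in\co}(\Sym^{p^{j_\tau}}\uo_\tauco)^{\otimes|\lambda_\tau|}$, with block exponent $p^{j_\ttau}+p^{j_{\ttau^*}}$ in the self-dual case $\co=\co^*$, where the two symmetric powers fuse into $\Sym^{p^{j_\ttau}(1+p^{e/2})}$. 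So it suffices to show, for each $\tauco\in\Upsilon$, that ${\rm pr}_{\tlambda_\tauco}\colon\uo_\tauco^{\otimes|\tlambda_\tauco|}\to\uo_\tauco^{\tlambda_\tauco}$ factors through this block-symmetric quotient.

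Next I would do the shape bookkeeping. The symmetry of $\lambda$ gives $|\lambda_\ttau|=|\lambda_{\ttau^*}|$, together with $\lambda_\ttau=\lambda_{\ttau^*}$ on self-dual orbits, so pairing $\tau$ with $\tau^*$ yields the componentwise identity $(\tlambda_\tauco)_i=\sum_{\tau\in\co}p^{j_\tau}\lambda_{i,\tau}$; in particular $\tlambda_\tauco$ is dominant and the slot count $\sum_{\tau\in\co}p^{j_\tau}|\lambda_\tau|$ equals $|\tlambda_\tauco|$, matching the two sides. The combinatorial heart is then to tile the Young diagram of $\tlambda_\tauco$ by the symmetrized blocks so that every block sits inside a single row: in row $i$, for each $\tau\in\co$, place $\lambda_{i,\tau}$ blocks of size $p^{j_\tau}$. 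By the displayed identity these exactly fill row $i$, and the total number of size-$p^{j_\tau}$ blocks used is $\sum_i\lambda_{i,\tau}=|\lambda_\tau|$, which is precisely the number of copies present in the intermediate object. The self-dual orbit is handled identically with block size $p^{j_\ttau}+p^{j_{\ttau^*}}$.

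Finally I would invoke the construction of ${\rm pr}_{\tlambda_\tauco}$. In the convention $\schur_{(a)}=\Sym^a$ used here, the Young symmetrizer defining ${\rm pr}_{\tlambda_\tauco}$ symmetrizes the tensor slots within each row of the diagram before antisymmetrizing columns; concretely it factors as $\uo_\tauco^{\otimes|\tlambda_\tauco|}\to\bigotimes_i\Sym^{(\tlambda_\tauco)_i}\uo_\tauco\to\uo_\tauco^{\tlambda_\tauco}$ (cf.\ \cite[Section 15.3]{FultonHarris} and \cite[Lemma 2.4.6]{EFMV}). Choosing the ordering of the slots compatible with the tiling, each block lies in a single row, so ${\rm pr}_{\tlambda_\tauco}$ is symmetric on the slots of every block and hence annihilates the antisymmetrizations that generate the kernel of the block-symmetrization $\uo_\tauco^{\otimes|\tlambda_\tauco|}\to\bigotimes_{\tau\in\co}(\Sym^{p^{j_\tau}}\uo_\tauco)^{\otimes|\lambda_\tau|}$. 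Therefore ${\rm pr}_{\tlambda_\tauco}$ factors through the block-symmetric quotient, the induced map being multiplication of the blocks within each row followed by the canonical projection onto $\uo_\tauco^{\tlambda_\tauco}$. Reassembling over $\tauco\in\Upsilon$ and over $\ttau\in\Sigma_F$ gives the claimed factorization of ${\rm pr}_{\tlambda}$.

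I expect the main obstacle to be exactly the compatibility between the symmetrizations forced by the powers $\Sym^{p^{j}}$ and the row structure of $\tlambda_\tauco$, i.e.\ producing the tiling and verifying that it respects the prescribed block sizes; this is where the symmetry of $\lambda$ and the fact that $p^{j_\tau}\lambda_{i,\tau}$ decomposes into $\lambda_{i,\tau}$ blocks of size $p^{j_\tau}$ are both essential. The self-dual case, where the fusion into $\Sym^{p^{j_\ttau}(1+p^{e/2})}$ must be tracked as a block of size $p^{j_\ttau}+p^{j_{\ttau^*}}$, is the likeliest place to slip. A secondary technical point is that ${\rm pr}_{\tlambda_\tauco}$ depends on a fixed tableau, so the identifications of tensor slots on the two sides must be chosen consistently with the tiling.
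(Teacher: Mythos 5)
Your proposal is correct, and it factors ${\rm pr}_{\tlambda}$ through exactly the intermediate object the paper uses, namely $\bigotimes_{\tauco\in\Upsilon}\bigotimes_{\tau\in\co}\left(\Sym^{p^{j_\tau}}(\uo_\tauco)\right)^{\otimes|\lambda_\tau|}$ (with the fused blocks $\Sym^{p^{j_\ttau}(1+p^{e/2})}$ when $\co=\co^*$), but it reaches the conclusion by a genuinely more self-contained route. The paper's proof constructs $p^\lambda:\uo^\lambda\to\uo^{\tlambda}$ by applying Schur functoriality to the morphisms $p_\ttau$ and then exhibits a commutative diagram whose bottom row \emph{is} the claimed factorization; the reason the bottom triangle commutes---i.e.\ why the generalized Young symmetrizer annihilates the kernel of the block symmetrizations $\uo_\tauco^{\otimes p^{j_\tau}}\to\Sym^{p^{j_\tau}}\uo_\tauco$---is left implicit, and the diagram simultaneously carries the Verschiebung maps and the maps $p_\ttau$, which is what ties the lemma to its use in assembling $\tth^\lambda_\Sigma$. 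You instead strip the statement down to multilinear algebra over $\Shp$: reduce to a single representative $\tauco$ using that $\tlambda$ is supported at $\Upsilon$; note the componentwise identity $(\tlambda_\tauco)_i=\sum_{\tau\in\co}p^{j_\tau}\lambda_{i,\tau}$ coming from Definition \ref{twistweight_defi}, which tiles each row of the Young diagram of $\tlambda_\tauco$ exactly by $\lambda_{i,\tau}$ blocks of size $p^{j_\tau}$; and then invoke that, in the paper's convention $\schur_\kappa(V)=V^{\otimes d_\kappa}\cdot c_\kappa$ with $c_\kappa=a_\kappa b_\kappa$ (Fulton--Harris), the projection ${\rm pr}_{\tlambda_\tauco}$ factors through row symmetrization---since $h\,a_\kappa=a_\kappa$ for $h$ in the row group, an identity over $\ZZ$---and hence through any block symmetrization refining the rows. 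This supplies precisely the one non-formal step that the paper's diagram takes for granted, it works over an arbitrary base, and it makes transparent that the lemma is independent of the $\mu$-ordinary context; the two caveats you flag (the ``rows first'' convention and the tableau-dependence of the slot identifications) are the right ones, and both are consistent with the paper's setup, so they are not gaps.
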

\begin{proof}
It suffices to observe that, for any simple symmetric weight $\lambda$, the morphisms $p_\ttau:\uo^2_\ttau\to\uuo_\ttau^2\to\uo^{\tdelta(\ttau)}$ induce (via Schur functors) a morphism $p^\lambda:\uo^\lambda\to \uuo^\lambda\to\uo^{\tlambda}$, which fits in the following commutative diagram
\[\xymatrix{
& \bigotimes_{\tau}(\uo_\tau)^{\otimes |\lambda_\tau|} \ar@/^2pc/[rr]^{{\rm pr}_\lambda}\ar[r]\ar[d]_{\otimes_\tau (\V_\tau)^{\otimes |\lambda_\tau|}}
& \bigotimes_{\ttau}(\uo^2_\ttau)^{\otimes |\lambda_\ttau|} \ar[r] \ar[d]_{\otimes_\ttau (p_\ttau)^{\otimes |\lambda_\ttau|}}
&\uo^\lambda  \ar[d]_{p^\lambda}\\
\bigotimes_{\tauco} (\uo_\tauco) ^{\otimes |\tlambda_\tauco|} \ar[r]\ar@/_2pc/[rrr]_{{\rm pr}_{\tlambda}}
&\bigotimes_{\tauco}\bigotimes_{\tau\in\co}\left({\rm Sym}^{p^{j_\tau}}(\uo_\tauco)\right)^{\otimes |\lambda_\tau|} \ar[r]
&
\bigotimes_{\ttau}\left(\uo_\ttau^{\tdelta(\ttau)}\right)^{\otimes |\lambda_\tau|}\ar[r] &\uo^{\tlambda}
}\]
\end{proof}

Let $\ttau\in\Sigma_F$ satisfy $ 0,n\not\in\cf(\co_\ttau)$. 
For any $\Sigma\subseteq\T_F$, and any good weight $\kappa$ supported at $\Sigma$, we define a differential operator $\tth_{\Sigma, \ttau}=\tth_{\Upsilon, \Sigma,\ttau}$, on \modp automorphic forms of  weight 
$\kappa$,  by
\[\tth_{\Sigma, \ttau}={\rm pr}_{\kappa+\kappa_{\ha,\Sigma},\delta_{\Upsilon}(\ttau)}\circ (\mathbb{I}\otimes p_\ttau)\circ \Theta_{\Sigma,\ttau},\]
\[\uo^\kappa\to\uo^{\kappa +\kappa_{\ha,\Sigma}} \otimes \uo^2_\ttau\to 
\uo^{\kappa+\kappa_{\ha,\Sigma}}  \otimes  \uo^{\tdelta(\ttau)}\to\uo^{\kappa+{\kappa_{\ha,\Sigma}} +\tdelta(\ttau)}.\]

If $\Upsilon\subseteq \Sigma$
, then $\kappa+{\kappa_{\ha,\Sigma}} +\tdelta^\Upsilon(\ttau)$ is also a good weight supported at $\Sigma$. Hence, the operators $\tth_{\Sigma,\ttau}$ can be iterated and composed without restrictions.
For any simple symmetric weight $\lambda$,  and any choice of an ordering of the set $\Sigma_F$, $d=[F_0:{\mathbb Q}]$, we define 
\[\tth^\lambda_{\Sigma}:={\rm pr}_{\kappa+\kappa_{\ha,\Sigma},\tlambda}\circ (\mathbb{I}\otimes{\rm pr}_{\tlambda})\circ 
\tth_{\Sigma,\ttau_1}^{|\lambda_{\ttau_1}|}\circ \cdots \circ \tth_{\Sigma,\ttau_d}^{|\lambda_{\ttau_d}|},\]
\[\uo^\kappa\to\uo^{\kappa +(|\lambda|/2)\kappa_{\ha,\Sigma}} \otimes \left(\otimes_{\ttau\in\Sigma_F} (\uuo^2_\ttau)^{|\lambda_\ttau|}\right)\to 
\uo^{\kappa+(|\lambda|/2)\kappa_{\ha,\Sigma}}  \otimes \uuo^{\lambda}\to
\uo^{\kappa+(|\lambda|/2)\kappa_{\ha,\Sigma}}  \otimes \uo^{\tlambda}\to
\uo^{\kappa+{(|\lambda|/2)\kappa_{\ha,\Sigma}} +\tlambda}.\]
 
By Lemmas \ref{tau-commute} and \ref{twistpr}, the operators $\tth_\Sigma^\lambda$ are well defined, independent of the choice of an ordering of the set $\Sigma_F$.
We deduce the following result, concerning entire theta operators that raise the weights by weights that are not symmetric, and which do not arise as the $\mod p$ reductions of $p$-adic Maass--Shimura operators.

\begin{thm}\label{newops-thm}
Let  $\Upsilon$ as in Equation \eqref{upsilon_def}, and assume $\Upsilon\not=\emptyset$.
 For  any simple symmetric weight $\lambda$, and  any $\Upsilon\subseteq \Sigma\subseteq \T_F$,  there is a differential operator on \modp automorphic forms of weight $\kappa$,  for $\kappa$ any good weight supported at $\Sigma$, 
 \[\tth^\lambda_{\Sigma}=\tth^\lambda_{\Upsilon, \Sigma}:\uo^\kappa\to \uo^{\kappa+(|\lambda|/2)\kappa_{\ha,\Sigma}+\tlambda} ,\] which raises the weight $\kappa$ by $(|\lambda|/2)\kappa_{\ha,\Sigma}+\tlambda^\Upsilon$, where $\tlambda^\Upsilon$ is as  in Definition \ref{twistweight_defi}.
\end{thm}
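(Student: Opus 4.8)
The plan is to verify that the explicit operator $\tth^\lambda_\Sigma$ assembled in the paragraphs preceding the statement is well-defined and raises weights by the stated amount; since the construction is already laid out, the real work lies in checking that each composition is legitimate and that the weight bookkeeping closes up. First I would isolate the basic building block $\tth_{\Sigma,\ttau}$ for a single $\ttau\in\Sigma_F$ with $0,n\notin\cf(\co_\ttau)$, defined as ${\rm pr}_{\kappa+\kappa_{\ha,\Sigma},\tdelta(\ttau)}\circ(\mathbb{I}\otimes p_\ttau)\circ\Theta_{\Sigma,\ttau}$. This requires three inputs: the operator $\Theta_{\Sigma,\ttau}\colon\uo^\kappa\to\uo^{\kappa+\kappa_{\ha,\Sigma}}\otimes\uo_\ttau^2$ from Theorem \ref{ANA_thm} (valid for good $\kappa$ supported at $\Sigma$), the morphism $p_\ttau\colon\uo_\ttau^2\to\uo^{\tdelta(\ttau)}$, and the canonical projection onto the sum of weights. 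The key point to verify about $p_\ttau$ is that after twisting by Verschiebung $\varpi_\ttau\colon\uo_\ttau^2\to\uo^{(p^{j})}_\tauco\otimes\uo^{(p^{j^*})}_{\tau_{\co^*}}$ and applying the natural map $\mathcal F^{(p^j)}\to\Sym^{p^j}\mathcal F$, the target is a genuine automorphic sheaf whose weight is the $\Upsilon$-twist $\tdelta(\ttau)$ of $\delta(\ttau)$; this is immediate from Definition \ref{twistweight_defi}, treating the cases $\co\neq\co^*$ and the inert case $\co=\co^*$ separately as in the preceding text.

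The crucial structural observation enabling iteration is that $\tdelta(\ttau)$ is supported at $\Upsilon$ and hence (by Remark \ref{rmkabove}) both good and simple, so that the output weight $\kappa+\kappa_{\ha,\Sigma}+\tdelta(\ttau)$ is again a good weight supported at $\Sigma$ whenever $\kappa$ is and $\Upsilon\subseteq\Sigma$. I would record this closure property explicitly: $\kappa_{\ha,\Sigma}$ is scalar and supported at $\Sigma$, hence good, and the sum of two good weights supported at $\Sigma$ is good supported at $\Sigma$. This is exactly where the hypothesis $\Upsilon\subseteq\Sigma$ is used, and it is what removes the obstruction to composition that afflicts the symmetric-weight operators $\Theta_{\Sigma,\ttau}$, whose outputs $\kappa+\kappa_{\ha,\Sigma}+\delta(\ttau)$ need not be good. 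With closure in hand, the iterated composition $\tth_{\Sigma,\ttau_1}^{|\lambda_{\ttau_1}|}\circ\cdots\circ\tth_{\Sigma,\ttau_d}^{|\lambda_{\ttau_d}|}$ is a legitimate chain of operators between automorphic sheaves of good weights.

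Next I would carry out the weight accounting. Each application of a basic operator contributes one factor of $\kappa_{\ha,\Sigma}$ and one factor of $\tdelta(\ttau)$; since $\lambda$ is symmetric and each $\delta(\ttau)$ has size two, the total number of basic operators applied is $\sum_{\ttau\in\Sigma_F}|\lambda_\ttau|=|\lambda|/2$, giving precisely the claimed $(|\lambda|/2)\kappa_{\ha,\Sigma}$ contribution. The accumulated $\tdelta(\ttau)$ factors are then organized into the single weight $\tlambda^\Upsilon$ by the projections $(\mathbb{I}\otimes{\rm pr}_{\tlambda})$ and ${\rm pr}_{\kappa+\kappa_{\ha,\Sigma},\tlambda}$; the compatibility needed here, namely that ${\rm pr}_{\tlambda}$ factors through $\otimes_\ttau(\uo^{\tdelta(\ttau)})^{\otimes|\lambda_\ttau|}$, is exactly Lemma \ref{twistpr}. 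Finally, independence of the construction from the chosen ordering of $\Sigma_F$ follows from the commutativity of the basic operators, which reduces (via their construction from $\nabla$) to Lemma \ref{tau-commute}, together again with Lemma \ref{twistpr}.

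The main obstacle I anticipate is the bookkeeping for the $\Upsilon$-twist and confirming that $p_\ttau$ lands in the correct automorphic sheaf, in particular handling the inert case $\co=\co^*$, where $\uo^{(p^j)}_\tauco\otimes\uo^{(p^{j^*})}_{\tau_{\co^*}}$ must be pushed further into a single symmetric power $\Sym^{p^j(1+p^{e/2})}(\uo_\tauco)$ and one must check that the resulting weight is still $\tdelta(\ttau)$. Everything else is a formal assembly of already-established ingredients (Theorem \ref{ANA_thm} and Lemmas \ref{tau-commute} and \ref{twistpr}); the genuinely new content, which is what makes the non-symmetric weight $\tlambda^\Upsilon$ accessible, is the realization of the \modp OMOL sheaf $\uuo_\ttau^2$ inside an automorphic sheaf of good weight via the Verschiebung twist followed by the symmetric-power map $f$.
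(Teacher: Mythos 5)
Your proposal is correct and takes essentially the same route as the paper's own construction: the basic operator $\tth_{\Sigma,\ttau}={\rm pr}_{\kappa+\kappa_{\ha,\Sigma},\tdelta(\ttau)}\circ(\mathbb{I}\otimes p_\ttau)\circ\Theta_{\Sigma,\ttau}$ built from Theorem \ref{ANA_thm} and the Verschiebung-twist-plus-symmetric-power map $p_\ttau$, the closure observation that $\kappa+\kappa_{\ha,\Sigma}+\tdelta(\ttau)$ stays good and supported at $\Sigma$ when $\Upsilon\subseteq\Sigma$ (which is what permits iteration), the count $\sum_{\ttau\in\Sigma_F}|\lambda_\ttau|=|\lambda|/2$ of Hasse factors, and the appeal to Lemmas \ref{twistpr} and \ref{tau-commute} for assembling $\tlambda^\Upsilon$ and for independence of the ordering of $\Sigma_F$ are exactly the steps the paper uses. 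There is no gap; your flagged concern about the inert case $\co=\co^*$ is handled precisely as you describe, by pushing into $\Sym^{p^j(1+p^{e/2})}(\uo_{\tau_\co})$ of weight $\tdelta(\ttau)$.
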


\begin{rmk}\label{DSG_compare}
For $F$ quadratic imaginary, $p$ inert in $F$, and indefinite signature, the choice of $\Upsilon, \ttau$ as above is unique, and the associated operator $\tth_{\Upsilon,\ttau}$ agrees 
(up to multiplication by the Hasse invariant $\HA_\Upsilon$) with the operator $\Theta$   constructed in \cite[Section 4]{DSG2}. In {\em loc. cit.}, the operator $\Theta$ is defined on  automorphic forms of scalar weights supported at $\Upsilon$, and can be iterated when the signature of the unitary group is $(n,1)$.
\end{rmk}

\section{Application to mod $p$ Galois representations}\label{GalOne-section}
In this section, we apply the results from the previous sections (more precisely, Section \ref{modpdiffop_sec} on analytic continuation of differential operators, and Section \ref{last_sec} on a new class of entire differential operators) to Galois representations.  For the first of these classes of differential operators, the results in this section remove the splitting constraint on $p$ from the analogous results in \cite[Sections 4 and 5]{EFGMM}.

\subsection{Commutation relations with Hecke operators}

Following the same approach as in \cite[Section 4]{EFGMM}, we study the commutation relations with Hecke operators (and Hida's $\mu$-ordinary projectors built from Hecke operators at $p$), of the \modp differential operators $\Theta_\tau$ and $\Theta^\lambda$ constructed in Section \ref{modpdiffop_sec}, and of the \modp differential operators  $\tth_\tau$ and $\tth^\lambda$ constructed in Section \ref{last_sec} (resp. of the $p$-adic Maass--Shimura differential operators $\CD^\lambda$ constructed in \cite{EiMa}). 

\begin{rmk} The definition of the differential operators $\Theta^\lambda=\Theta^\lambda_\Sigma$ and $\tth^\lambda=\tth^\lambda_\Sigma$ depends on the choice of a non-empty set $\Sigma\subseteq \T_F$. 
As we shall see the results in this section do not depend on $\Sigma\subseteq \T_F$. We therefore drop the subscript $\Sigma$ from our notation.  
\end{rmk}

\begin{rmk}
The definition of the differential operators $\tth^\lambda=\tth_{\Upsilon, \Sigma}^\lambda$ depends on the existence and choice of a non-empty set $\Upsilon$
as in Equation (\ref{upsilon_def}). In the following, we assume there exists $\Upsilon$ nonempty, we fix such a choice and drop the subscript $\Upsilon$ from our notation.
\end{rmk}

\begin{rmk}
In our discussion Hecke operators below, following the approach of \cite{FaltingsChai}, we only use the fact that the Hecke action is formulated in terms of algebraic correspondences, so other approaches similarly formulated in terms of algebraic correspondences (even if they are normalized differently) also fit into this framework and, in particular, other normalizations would not affect the statements of Corollaries \ref{Heig} and \ref{Galcor}.  (One reason for making this observation is that when writing double coset representations of Hecke operators, one sometimes needs to normalize them to work integrally, as explained in moving from the ``naive,'' ``unnormalized'' Hecke operators expressed in terms of double cosets to normalized, integral Hecke operators in \cite[\S 1]{FaPi}.)
\end{rmk}

\subsubsection{Hecke operators away from $p$}
We briefly recall the definition of the action of the prime-to-$p$ Hecke operators on \modp (resp. $p$-adic) automorphic forms. (We refer to \cite[Ch. VII, \S3]{FaltingsChai}, and also \cite[Section 4.2]{EFGMM}, for details.) 

Fix a rational prime $\ell$, $\ell\neq p$. {\bf We assume that $\ell$ is a prime of good reduction for $\Sh_\compact$}. I.e., for each prime $v|\ell$ of $F$, we assume that $\compact_v$ is a hyperspecial maximal compact subgroup of $G_v=G(F_v)$.

Let $\ell-{\rm Isog}$ denote the moduli space of $\ell$-isogenies over $\Sh$.  
We denote by $\varphi: \projection_1^\ast\Auniv\rightarrow\projection_2^\ast\Auniv$ the universal $\ell$-isogeny, where $\projection=(\projection_1, \projection_2):\ell-{\rm Isog}\rightarrow \Sh\times \Sh$ denotes the natural structure morphism. 
Similarly to \cite[Definition 4.2.2]{EFGMM}, for any connected component $Z$ of $\ell-{\rm Isog}$, 
we denote by $T_{(Z,\varphi)}$ the natural action of $(Z, \varphi)$ on $H^0(\Sh, \omega^\kappa)$
via algebraic correspondence.  By abuse of notation, we also denote by $T_{(Z,\varphi)}$ the induced actions on \modp automorphic forms over $\Shp$ (resp. on $p$-adic automorphic forms over $\shmuord$).
We generalize \cite[Theorem 4.2.4]{EFGMM}.

\begin{prop}\label{ellhecke_thm}
Let $(Z, \varphi)$ be a connected component  of the moduli space of $\ell$-isogenies over $\Sh$, with $\nu(\varphi)$  the similitude factor of $\varphi$. Let  $\kappa,\lambda$ be two weights, and assume $ \lambda $ symmetric.
\begin{enumerate}
 \item 
 $T_{(Z,\varphi)} \circ \CD^\lambda=
\nu(\varphi)^{|\lambda|/2}
\CD^\lambda\circ
 T_{(Z,\varphi)} $.\label{part1ee}
\item   
$T_{(Z,\varphi)} \circ \Theta^\lambda=
\nu(\varphi)^{|\lambda|/2}
\Theta^\lambda\circ
 T_{(Z,\varphi)}  $   if  both $\kappa$ and  either $\lambda$ or $\lambda-\delta(\tau)$, for some $\tau\in\Sigma_{F}$,  are good.\label{part2ee}
\item $T_{(Z,\varphi)} \circ \tth^\lambda=
\nu(\varphi)^{|\lambda|/2}
\tth^\lambda\circ
 T_{(Z,\varphi)} , $  if $\kappa$ is good and $\lambda$ is simple. \label{part3ee}
 \end{enumerate}
\end{prop}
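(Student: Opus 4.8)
The plan is to reduce all three statements to the functoriality of the geometric ingredients under the universal $\ell$-isogeny, using that for $\ell\neq p$ the isogeny $\varphi$ induces an isomorphism on $p$-divisible groups. First I would record the relevant equivariance properties of the induced isomorphism $\varphi^*$ on de Rham (and crystalline) cohomology: it (i) is compatible with the $\CO_F$-decomposition of Section \ref{signature-intro}, (ii) preserves the Hodge filtration $\uo\subseteq\hdrone$, (iii) is horizontal for the Gauss--Manin connection $\nabla$ by its functoriality, (iv) commutes with ${\rm Fr}^*$ and with Verschiebung (again by functoriality, since $\varphi$ is prime to $p$), and (v) rescales the polarization pairing, hence the Kodaira--Spencer isomorphism $\ks$, by $\nu(\varphi)$. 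Properties (iii) and (iv) show that $\varphi^*$ preserves the canonical complement $U$ of Proposition \ref{Usubmodule-section}, since $U$ is characterized by $({\rm Fr}^*)^\be$-stability and $\nabla$-horizontality, and therefore commutes with the projection $\pi_U$. Because $\ell$-isogenies preserve Newton polygons, $T_{(Z,\varphi)}$ also preserves $\shpmuord$ and $\shmuord$.

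For part \eqref{part1ee}, I would first prove $T_{(Z,\varphi)}\circ\diffop_\tau=\nu(\varphi)\,\diffop_\tau\circ T_{(Z,\varphi)}$ on $\uo^\kappa$ over $\shmuord$. This is immediate from $\diffop_\tau=\pi_U\circ(\id\otimes\iota_\tau)\circ\nabla_\tau$: properties (ii), (iii) and the preservation of $U$ make $\varphi^*$ commute with $\pi_U$ and with $\nabla_\tau$, while the single factor $\ks^{-1}$ built into $\nabla_\tau$ contributes one factor of $\nu(\varphi)$ by (v) (the exact exponent, including its sign, being dictated by the normalization of $T_{(Z,\varphi)}$). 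Iterating $e=|\lambda|/2$ times and applying the Schur-functor projections ${\rm pr}_\lambda$ and ${\rm pr}_{\kappa,\lambda}$, which are canonical and hence Hecke-linear, yields the stated relation for $\CD^\lambda=\diffop_\tau^\lambda$ with total factor $\nu(\varphi)^{|\lambda|/2}$. This is the argument of \cite[Theorem 4.2.4]{EFGMM}, now available over $\shmuord$ because $\nabla$, $\ks$, $U$ and $\pi_U$ all exist there.

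For parts \eqref{part2ee} and \eqref{part3ee}, the cleanest route is a density argument. Both $\Theta^\lambda$ and $\tth^\lambda$ are morphisms of locally free sheaves over the \emph{reduced} scheme $\Shp$, and both sides of each asserted identity are morphisms of sheaves; since $\shpmuord$ is open and dense in $\Shp$, two such morphisms agreeing after restriction to $\shpmuord$ agree everywhere. It therefore suffices to verify the commutation over $\shpmuord$. There, Theorem \ref{ANAlambda_thm} gives $\Theta^\lambda=\HA^{||\lambda'||}\HA_\Sigma^{|\lambda|/2}\cdot\CD^\lambda$, and $\tth^\lambda$ is built over $\shpmuord$ from $\underline{\CD}^\lambda$ via the OMOL identifications (Proposition \ref{OMOLextend}, Theorem \ref{OMOLdiffextend}), the Verschiebung twists $\V_j$, and the projections of Lemma \ref{twistpr}, times a power of the partial Hasse invariants. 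Because $\varphi$ is an isomorphism on $A[p^\infty]$, it commutes with the formation of the partial Hasse invariants and of the twists $\V_j$, so the only similitude scaling is the $\nu(\varphi)^{|\lambda|/2}$ already produced inside $\CD^\lambda$ (resp. $\underline{\CD}^\lambda$), while the powers of $p$ and of the Hasse invariants carry the same similitude factor on both sides of the identity and hence cancel. Combining this with part \eqref{part1ee} gives parts \eqref{part2ee} and \eqref{part3ee}.

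The main obstacle is the bookkeeping in the last step: one must check that the auxiliary maps in the analytic continuation, namely the Frobenius-power maps $\phi_\tau$ and Hasse adjugates underlying $\Pi^\kappa,\Pi_\kappa$ in Theorem \ref{PEP_thm}, and the twists $\V_j$ and the projections of Lemma \ref{twistpr} underlying $\tth^\lambda$, are genuinely $T_{(Z,\varphi)}$-equivariant and contribute \emph{no net} similitude factor beyond $\nu(\varphi)^{|\lambda|/2}$. The density reduction is what makes this tractable, since it replaces a global analysis of these maps by a comparison with $\CD^\lambda$ on $\shpmuord$; the remaining input is then the functoriality of ${\rm Fr}^*$, Verschiebung, and the Hasse invariant under prime-to-$p$ isogenies, all of which follow from $\varphi$ being an isomorphism on $A[p^\infty]$.
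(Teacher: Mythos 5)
Your proof of part~\eqref{part1ee} is essentially the paper's own argument: reduce to the basic operators $\diffop_\tau$ and use functoriality of the Gauss--Manin connection, preservation of the canonical complement $U$ of Proposition \ref{Usubmodule-section} (hence equivariance of $\pi_\tau$), and the relation $\nu(\varphi)\KS=\KS\circ(\varphi^*\otimes\varphi^*)$, with the Schur-functor projections being Hecke-linear. For parts~\eqref{part2ee} and~\eqref{part3ee}, however, you take a genuinely different route. The paper argues directly over all of $\Shp$: it writes $\Theta_\tau=(\Pi_\tau\otimes\ks^{-1})\circ\nabla_{\CA/\Shp}$ and $\tth_\tau=(\mathbb{I}\otimes p_\tau)\circ\Theta_\tau$, deduces the commutation from the functoriality under prime-to-$p$ isogenies of $\nabla$, of the morphisms $\Pi_\tau$ of Theorem \ref{PEP_thm}, and of $p_\tau$, runs the same argument on the global definition of $\Theta^\lambda$ when $\lambda$ itself is good (the paper is explicit that $\Theta^\lambda$ is \emph{not} an iterate of the $\Theta_\tau$, so this is a separate check), and uses the factorization $\Theta^{\lambda}=\Theta_\tau\circ\Theta^{\lambda-\delta(\tau)}$ in the remaining case. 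You instead reduce to $\shpmuord$ by density and conclude from the congruences of Theorems \ref{ANAlambda_thm} and \ref{ANA_thm}, Hecke-equivariance of the partial Hasse invariants (valid because $\varphi$ is an isomorphism on $p$-divisible groups), and part~\eqref{part1ee} mod $p$. This alternative is sound: restriction of global sections of a locally free sheaf on the reduced scheme $\Shp$ to the dense open $\shpmuord$ is injective, and the correspondence preserves $\shpmuord$ since Newton polygons are invariant under prime-to-$p$ isogenies, so $(T_{(Z,\varphi)}f)\vert_{\shpmuord}$ depends only on $f\vert_{\shpmuord}$ --- both points you record. The trade-off: your route never requires checking equivariance of the more elaborate maps $\Pi^\kappa$, $\Pi_\kappa$ (divided crystalline Frobenius, adjugates) over all of $\Shp$, the only new input being isogeny-invariance of $\HA_\tau$; the paper's route is self-contained over $\Shp$, needs no density step, and yields the equivariance of the $\Pi$'s as a byproduct. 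One imprecision worth fixing: the composites $T_{(Z,\varphi)}\circ\Theta^\lambda$ and $\Theta^\lambda\circ T_{(Z,\varphi)}$ are not morphisms of sheaves on $\Shp$ (the Hecke action is a correspondence action on global sections, involving a pushforward), so your density step should be phrased as injectivity of the restriction map on global sections applied to the two sections $(T_{(Z,\varphi)}\circ\Theta^\lambda)(f)$ and $\nu(\varphi)^{|\lambda|/2}(\Theta^\lambda\circ T_{(Z,\varphi)})(f)$ for each $f$; this does not affect correctness.
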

\begin{proof}
For Part \eqref{part1ee}, by construction of the operator $\CD^\lambda$, the statement reduces to the special cases \[T_{(Z,\varphi)} \circ \CD_{\tau}=
\nu(\varphi)
\CD_{\tau}\circ
 T_{(Z,\varphi)},\]
for any $\tau\in\Sigma_{F}$. By the definition, $\CD_{\tau}=(\pi_{\tau}\otimes\ks^{-1}) \circ \nabla_{\CA/\shmuord}$, and the commutation relations  follow from the functoriality of the Gauss--Manin connection, the definition of the morphisms $\pi_{\tau}$ and the equality
$\nu(\varphi)\KS=\KS\circ(\varphi^*\otimes\varphi^*)$.

For Part \eqref{part2ee}, for any $\tau\in\Sigma_{F}$,  the operators 
$\Theta_{\tau}$ are defined as $\Theta_{\tau}=(\Pi_{\tau}\otimes\ks^{-1}) \circ \nabla_{\CA/\shp}$, and the same argument  yields the result for the weight $\lambda=\delta(\tau)$.
On the other hand, for a more general weight $\lambda$, the operators $\Theta^\lambda$ are not constructed by composition/iteration of the operators
$\Theta_{\tau}$, thus the statement does not reduce to the aforementioned case. When $\lambda$ is a good weight, the same argument still applies, with minor changes.
When $\lambda-\delta(\tau)$, for some $\tau\in\Sigma_{F}$,  is a good weight, then the statement follows from the equality $\Theta^{\lambda+\delta({\tau})}=\Theta_{\tau}\circ\Theta^\lambda$, and the previously established cases.

For Part \eqref{part3ee}, by construction of the operator $\tth^\lambda$, the statement reduces to the special cases 
\[T_{(Z,\varphi)}\circ \tth_\tau = \nu(\varphi)\cdot \tth_\tau\circ T_{(Z,\varphi)}\]
for
any  $\tau\in\Sigma_F$ satisfying  
$0,n\not\in\cf(\co_\tau)$.
By definition, $\tth_\tau=(\mathbb{I}\otimes p_\tau)\circ\Theta_\tau$,
and the statement follow from Part (2) and the functoriality of the morphisms $p_\tau$.
\end{proof}

Finally, we recall the action of the prime-to-$p$ Hecke operators. 
We define ${\mathcal {H}}_0(G_\ell,\IQ)$ to be the $\IQ$-subalgebra of   the local Hecke algebra ${\mathcal{H}}(G_\ell,\compact_\ell;\IQ)$ generated by locally constant function supported on cosets $\compact_\ell\gamma\compact_\ell$, for $\gamma\in G_\ell$ an integral matrix. 
Then, the action on \modp (resp. $p$-adic) automorphic forms of the  prime-to-$p$ Hecke operators  agrees with that of the prime-to-$p$ algebraic correspondences, via pullback under the map of $\IQ$-algebras
\[h_\ell: {\mathcal {H}}_0(G_\ell,\IQ)\to \IQ[\ell-{\rm Isog}/Y]\]
where $Y=\shp/\kappa(\mathfrak{p})$ (resp.  
$\shpmuord/\Witt$), 
which to  any double coset $\compact_\ell\gamma\compact_\ell$, with $\gamma$ an integral matrix in $G_\ell$, associates  the union of those connected component of $\ell-{\rm Isog}$ where the universal isogeny is an $\ell$-isogeny of type $\compact_\ell\gamma\compact_\ell$.

The following Corollary is an immediate consequence of Proposition \ref{ellhecke_thm}.

\begin{coro}\label{Heig}
Let $f$ be a \modp Hecke eigenform of weight $\kappa$ on $\Shp$. 
Assume $\kappa$  is good.  Then:
\begin{enumerate}
\item 
For any symmetric weight $\lambda$, such that either $\lambda$ or $\lambda-\delta(\tau)$ is good, for some $\tau\in \Sigma_{F}$, if 
$\Theta^\lambda(f)$ is nonzero then it is a \modp Hecke eigenform.

\item 
For any simple symmetric weight $\lambda$, if $\tth^\lambda(f)$ is nonzero, then $\tth^\lambda(f)$ is a \modp Hecke eigenform. 
\end{enumerate}
\end{coro}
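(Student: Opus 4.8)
The plan is to read off the corollary as a formal consequence of the commutation relations in Proposition \ref{ellhecke_thm}, together with the description of the prime-to-$p$ Hecke action through the map $h_\ell$. The one structural point to isolate at the outset is that, for a fixed integral double coset $\compact_\ell\gamma\compact_\ell$, every connected component $(Z,\varphi)$ of $\ell-{\rm Isog}$ appearing in $h_\ell(\compact_\ell\gamma\compact_\ell)$ parametrizes isogenies of a single type, so the similitude factor $\nu(\varphi)$ takes a constant value $\nu(\gamma)$ on the whole fiber, independent of the component $Z$. This is what will let me pull the scalar $\nu(\varphi)^{|\lambda|/2}$ out past a sum over components.

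Concretely, first I would fix a prime $\ell\neq p$ of good reduction and write the Hecke operator attached to $\compact_\ell\gamma\compact_\ell$ as the finite sum $T=\sum_Z T_{(Z,\varphi)}$ over the components in $h_\ell(\compact_\ell\gamma\compact_\ell)$. Under the stated goodness hypotheses on $\kappa$ and $\lambda$ (so that Proposition \ref{ellhecke_thm}(2) applies in part (1), and simplicity of $\lambda$ so that Proposition \ref{ellhecke_thm}(3) applies in part (2)), each component satisfies $T_{(Z,\varphi)}\circ\Theta^\lambda=\nu(\varphi)^{|\lambda|/2}\,\Theta^\lambda\circ T_{(Z,\varphi)}$. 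Applying this to $f$ and using that $\nu(\varphi)=\nu(\gamma)$ is constant across the fiber gives
\[
T\bigl(\Theta^\lambda(f)\bigr)=\sum_Z\nu(\varphi)^{|\lambda|/2}\,\Theta^\lambda\bigl(T_{(Z,\varphi)}f\bigr)=\nu(\gamma)^{|\lambda|/2}\,\Theta^\lambda\bigl(Tf\bigr).
\]
Since $f$ is a Hecke eigenform, $Tf$ is a scalar multiple of $f$, so the right-hand side is a scalar multiple of $\Theta^\lambda(f)$; thus, whenever $\Theta^\lambda(f)\neq 0$, it is an eigenvector for $T$, and letting $T$ range over all prime-to-$p$ double-coset operators exhibits $\Theta^\lambda(f)$ as a \modp Hecke eigenform. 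Part (2) is word-for-word the same argument with $\tth^\lambda$ in place of $\Theta^\lambda$ and Proposition \ref{ellhecke_thm}(3) in place of (2).

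I expect the only real obstacle to be the bookkeeping mismatch between the two levels at which things are phrased: Proposition \ref{ellhecke_thm} is stated componentwise for $(Z,\varphi)$, whereas the notion of Hecke eigenform is phrased for the double-coset operators $h_\ell(\compact_\ell\gamma\compact_\ell)$. Bridging this requires the constancy of $\nu(\varphi)$ on each fiber of $h_\ell$ noted above, which is exactly the behavior of the similitude factor under the correspondence; once that is in hand, the factoring of the scalar out of the sum is immediate. The nonvanishing hypothesis on $\Theta^\lambda(f)$ (resp.\ $\tth^\lambda(f)$) is precisely what is needed in order to promote the eigenvector relation to the statement that the image is an eigenform.
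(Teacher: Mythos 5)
Your proof is correct and takes essentially the same route as the paper: the paper deduces Corollary \ref{Heig} from Proposition \ref{ellhecke_thm} as an immediate consequence, and your argument---summing the componentwise commutation relations over the components of $h_\ell(\compact_\ell\gamma\compact_\ell)$, where the fixed isogeny type forces $\nu(\varphi)$ to be constant, then invoking the eigenform property of $f$---is exactly the intended deduction spelled out.
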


\subsubsection{Hecke operators at $p$}

We briefly recall the definition of the action of Hecke operators  at $p$, on $p$-adic automorphic forms over the $\mu$-ordinary locus $\shmuord$. (We refer to \cite[Ch. VII, \S4]{FaltingsChai}, and also \cite[Section 4.3]{EFGMM}, for details.) 

Let $p-{\rm Isog}^o$  denote the moduli space of $p$-isogenies over the $\mu$-ordinary locus $\shmuord$. 
For  any  connected component $(Z, \varphi)$ of $p-{\rm Isog}^o$, we write $T_{(Z,\varphi)}$ for the  action $(Z,\varphi)$ on $p$-adic automorphic forms over $\shmuord$.

We generalize \cite[Theorem4.3.3]{EFGMM}. 

\begin{prop}
For any connected component $(Z, \varphi)$ of $p-{\rm Isog}^o$, with $\nu(\varphi)$  the similitude factor of $\varphi$, and any two  weights $\kappa,\lambda$, with $\lambda $ symmetric, 
\[T_{(Z,\varphi)} \circ \CD^\lambda=
\nu(\varphi)^{|\lambda|/2}
\CD^\lambda\circ
 T_{(Z,\varphi)} .\]
 In particular, if $\nu(\varphi)>0$ then $T_{(Z,\varphi)} \circ \CD^\lambda=0$.
\end{prop}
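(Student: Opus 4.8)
The plan is to mirror the proof of Proposition \ref{ellhecke_thm}(1), now for a $p$-isogeny correspondence over the $\mu$-ordinary locus $\shmuord$ rather than an $\ell$-isogeny correspondence away from $p$. First I would reduce, exactly as there, to the basic commutation relation
\[
T_{(Z,\varphi)}\circ \CD_\tau = \nu(\varphi)\,\CD_\tau\circ T_{(Z,\varphi)}, \qquad \tau\in\Sigma_F,
\]
since $\CD^\lambda$ is built by iterating and composing the operators $\CD_\tau$ and then projecting onto the weight-$\lambda$ summand; the $|\lambda|/2$ occurrences of the basic operators account for the exponent in $\nu(\varphi)^{|\lambda|/2}$. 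Writing $\CD_\tau = (\pi_\tau\otimes\ks^{-1})\circ\nabla_{\CA/\shmuord}$, the relation then follows from three ingredients, all formal once set up: the functoriality of the Gauss--Manin connection under the isogeny pullback $\varphi^*$, the transformation law $\nu(\varphi)\KS = \KS\circ(\varphi^*\otimes\varphi^*)$ for the Kodaira--Spencer morphism (so that each application of $\ks^{-1}$ contributes a factor $\nu(\varphi)$), and the compatibility of the projection $\pi_U$, hence of each $\pi_\tau$, with $\varphi^*$.

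The one point genuinely new compared with the $\ell$-isogeny case, and the step I expect to be the main obstacle, is the compatibility $\pi_{U}\circ\varphi^* = \varphi^*\circ\pi_{U}$ over $\shmuord$. For $\ell\neq p$ this is immediate, but a $p$-isogeny interacts nontrivially with Frobenius, so one must check that $\varphi^*$ respects the canonical decomposition $\hdrone = \uo\oplus U$ of Proposition \ref{Usubmodule-section}. The inclusion $\varphi^*(\uo)\subseteq\uo$ is automatic, being the pullback of invariant differentials along an isogeny. For the complement I would invoke the uniqueness characterization of $U$ as the unique $({\rm Fr}^*)^\be$-stable, $\nabla$-horizontal complement to $\uo$: since $\varphi^*$ commutes with Frobenius and with the Gauss--Manin connection, the image $\varphi^*(U)$ is again $({\rm Fr}^*)^\be$-stable and horizontal, and one deduces $\varphi^*(U)\subseteq U$ from the slope (unit-root) characterization of $U$, preserved under the Frobenius-equivariant map $\varphi^*$. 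This yields $\pi_U\circ\varphi^*=\varphi^*\circ\pi_U$ and completes the commutation relation. The hard part is precisely controlling how $\varphi^*$ moves Frobenius slopes, so that $\varphi^*(U)$ cannot acquire a component along $\uo$.

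Finally, for the ``in particular'' assertion I would use that the similitude factor $\nu(\varphi)$ of a $p$-isogeny is a power of $p$, so the components with $\nu(\varphi)>0$ are exactly those for which $\nu(\varphi)$ is divisible by $p$. For such components and any nonzero symmetric $\lambda$, the scalar $\nu(\varphi)^{|\lambda|/2}$ is divisible by $p$, and the commutation relation shows that $T_{(Z,\varphi)}\circ\CD^\lambda = \nu(\varphi)^{|\lambda|/2}\,\CD^\lambda\circ T_{(Z,\varphi)}$ is divisible by the same positive power of $p$; hence it vanishes after reduction modulo $p$, which is the sense relevant to this section on mod $p$ Galois representations. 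This is the direct higher-rank analogue of the classical identity $\theta\circ V_p = p\,V_p\circ\theta$ for the Serre--Swinnerton-Dyer theta operator, whose reduction modulo $p$ is the vanishing $\theta\circ V_p\equiv 0$.
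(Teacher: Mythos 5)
Your proposal is correct and takes exactly the route the paper intends: the paper offers no separate proof of this proposition, saying only that it generalizes \cite[Theorem 4.3.3]{EFGMM}, i.e.\ one reruns the proof of Proposition \ref{ellhecke_thm}(1) — reduction to the basic operators $\CD_\tau$, functoriality of the Gauss--Manin connection, the Kodaira--Spencer transformation law $\nu(\varphi)\KS=\KS\circ(\varphi^*\otimes\varphi^*)$, and compatibility of $\pi_\tau$ with $\varphi^*$ — now for $p$-isogenies over $\shmuord$. Your isolation of the one genuinely new input, namely that $\varphi^*$ respects the decomposition $\hdrone=\uo\oplus U$ (via the Frobenius-equivariance, horizontality, and slope characterization of $U$ from Proposition \ref{Usubmodule-section}), together with the mod $p$ reading of the ``in particular'' clause (the similitude factor of a $p$-isogeny being a power of $p$), is precisely what that generalization requires.
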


For any connected component $(Z, \varphi)$ of $p-{\rm Isog}^o/\shmuord\otimes_\Witt {\mathbb F}$, 
we define 
 the {\em normalized action} of $(Z,\varphi)$ on \modp automorphic forms over $\shpmuord$ as  $t_{(Z,\varphi)}:=\mu^{-1}(Z,\varphi)T_{(Z,\varphi)}$, where $\mu(Z,\varphi)$ is the purely inseparable multiplicity of the geometric fibers of $Z\to\shpmuord$.

We are now ready to introduce the action of the Hecke operators at $p$.
Following {\it loc. cit.}, we identify $\Levi\times \mathbb{G}_m$ with the appropriate maximal Levi subgroup $M$ of $\mathcal{G}$ over $\CO_{\reflexfield_\mathfrak{p}}$, and 
realize the local Hecke algebra ${\mathcal{H}}(M(\reflexfield_{\mathfrak{p}}), M(\CO_{\reflexfield_\mathfrak{p}});\IQ)$ as a subalgebra of ${\mathcal{H}}(G(\reflexfield_{\mathfrak{p}}), {\mathcal G}(\CO_{\reflexfield_\mathfrak{p}});\IQ)$. (Note that, when the ordinary locus is nonempty, the Levi subgroup $M$ is defined over $\ZZ_p$.)
We set $M_p:=M(\reflexfield_\mathfrak{p})$, and define ${\mathcal {H}}_0(M_p,\IQ)$ to be the $\IQ$-subalgebra of  the local Hecke algebra ${\mathcal{H}}(M_p, M(\CO_{\reflexfield_\mathfrak{p}});\IQ)$ generated by locally constant function supported on cosets $M(\CO_{\reflexfield_\mathfrak{p}})\gamma M(\CO_{\reflexfield_\mathfrak{p}})$, for $\gamma\in M_p$ an integral matrix.

Then, the action of the  Hecke operators at $p$  on \modp automorphic forms over $\shmuord$  agrees with the normalized action of the $p$-power algebraic correspondences, via pullback under the map of $\IQ$-algebras
\[h_p: {\mathcal {H}}_0(M_p,\IQ)\to \IQ[p-{\rm Isog}^o/\shpmuord],\]
which to  any double coset $\compact_\ell\gamma\compact_\ell$, with $\gamma$ an integral matrix in $G_\ell$, associates  the union of those connected component of $p-{\rm Isog}^o$ where the universal isogeny is a $p$-isogeny of type $M(\CO_{\reflexfield_\mathfrak{p}})\gamma M(\CO_{\reflexfield_\mathfrak{p}})$.

\subsubsection{Ordinary projector} When the ordinary locus is nonempty,  in \cite[Section 4.3.1]{EFGMM} we also address the interaction between differential operators 
and Hida's ordinary projector.  More generally, even when the ordinary locus is empty, we have the $\mu$-ordinary project $\be$, which coincides with Hida's ordinary projector when the ordinary locus is nonempty.  The $\mu$-ordinary projector $\be$ was introduced in a general setting in \cite[Section 6.2]{HidaAsian} and later explored in the context of $p$-adic automorphic forms over the $\mu$-ordinary locus of unitary Shimura varieties in \cite{brasca-rosso}.  The Hecke operators at $p$ are defined in \cite[Section 3]{brasca-rosso}, and then $\be$ is built from them analogously to in the ordinary case.
By a similar argument to the proof of \cite[Corollary 4.3.5]{EFGMM}, we then have Corollary \ref{ordprojector-cor}, which specializes to \cite[Corollary 4.3.5]{EFGMM} when the ordinary locus is nonempty.

\begin{cor}\label{ordprojector-cor}
For any  weight $\kappa$ and symmetric weight $\lambda$,  $\be\diffop_\kappa^\lambda = 0.$
\end{cor}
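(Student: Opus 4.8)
The plan is to adapt the proof of \cite[Corollary 4.3.5]{EFGMM} essentially verbatim, the only genuinely new input being the commutation relation of the preceding Proposition, which is now available over the $\mu$-ordinary locus $\shmuord$ even when the ordinary locus is empty. First I would recall that the $\mu$-ordinary projector factors as a $p$-adic limit $\be=\lim_{m}U_p^{m!}$, where $U_p$ denotes the normalized Hecke operator at $p$ built (following \cite{HidaAsian, brasca-rosso}) from the normalized correspondences $t_{(Z,\varphi)}=\mu^{-1}(Z,\varphi)T_{(Z,\varphi)}$, and where every connected component $(Z,\varphi)$ of $p\text{-}\mathrm{Isog}^o$ occurring in $U_p$ has similitude factor $\nu(\varphi)$ equal to a \emph{positive} power of $p$. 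Thus the corollary reduces to showing that $U_p$ acts topologically nilpotently on the image of $\CD^\lambda$.

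To that end, the preceding Proposition gives, for each such component, the identity $t_{(Z,\varphi)}\circ\CD^\lambda=\nu(\varphi)^{|\lambda|/2}\,\CD^\lambda\circ t_{(Z,\varphi)}$. Since each $\nu(\varphi)$ is a positive power of $p$ and $\lambda$ is a nonzero symmetric weight (being admissible, so $|\lambda|>0$), the scalar $\nu(\varphi)^{|\lambda|/2}$ is itself a positive power of $p$; summing over the components defining $U_p$ yields $U_p\circ\CD^\lambda=p^{c}\,\CD^\lambda\circ U_p$ for some integer $c>0$. As $p^{c}$ is a central scalar, I would then iterate to obtain $U_p^{m!}\circ\CD^\lambda=p^{c\,m!}\,\CD^\lambda\circ U_p^{m!}$ for all $m$. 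Passing to the limit, the operators $U_p^{m!}$ converge and so are uniformly bounded, and $\CD^\lambda$ is continuous, so the right-hand side tends $p$-adically to $0$ because $p^{c\,m!}\to 0$. Hence $\be\circ\CD^\lambda=\lim_{m}U_p^{m!}\circ\CD^\lambda=0$, which is the desired statement $\be\,\diffop_\kappa^\lambda=0$.

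The limit manipulation itself is formal, so the real content—and the one point I would check carefully—is that the commutation relation survives the normalization $t_{(Z,\varphi)}=\mu^{-1}(Z,\varphi)T_{(Z,\varphi)}$ and applies precisely to the operators generating $\be$ in the (possibly empty ordinary locus) $\mu$-ordinary setting. This is exactly what the preceding Proposition supplies, extending \cite[Theorem 4.3.3]{EFGMM}: the factor $\nu(\varphi)^{|\lambda|/2}$ is intrinsic to the interaction of $\CD^\lambda$ with the Kodaira--Spencer isomorphism (via $\nu(\varphi)\KS=\KS\circ(\varphi^\ast\otimes\varphi^\ast)$) and is unaffected by the scalar normalization $\mu^{-1}(Z,\varphi)$, while the $\mu$-ordinary Hecke theory of \cite{brasca-rosso} guarantees that $\be$ is indeed assembled from correspondences with $\nu(\varphi)$ a positive power of $p$.
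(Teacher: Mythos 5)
Your proposal is correct and follows essentially the same route as the paper, which simply invokes the argument of \cite[Corollary 4.3.5]{EFGMM}: the $\mu$-ordinary projector $\be$ is assembled (via \cite{brasca-rosso}) from Hecke operators at $p$ whose underlying correspondences have similitude factor a positive power of $p$, and the preceding Proposition's commutation relation $T_{(Z,\varphi)}\circ\CD^\lambda=\nu(\varphi)^{|\lambda|/2}\,\CD^\lambda\circ T_{(Z,\varphi)}$ then forces $\be\,\diffop_\kappa^\lambda=0$. The only cosmetic difference is that the paper packages the key vanishing directly in the Proposition's final clause (each such operator already kills the image of $\CD^\lambda$), whereas you re-derive it through the $p$-adic limit $\be=\lim_m U_p^{m!}$ with factor $p^{c\,m!}\to 0$; these are the same argument.
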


\subsection{Consequences for Galois representations}
Let $\chi$ denote the  \modp cyclotomic character. Recall  $\hat{\nu}:\Gm\to\hat{G}$ is the cocharacter dual to the similitude factor $\nu:G\to\Gm$.

By a similar argument to the proof of  \cite[Theorem B]{EFGMM}, we  extend  \cite[Theorem B]{EFGMM} to our context.

\begin{cor} [Action of differential operators on \modp Galois representations]\label{Galcor}
Let $f$ be a
\modp Hecke eigenform  on $\shp$ of weight $\kappa$, for $\kappa$  a  weight supported at $\Sigma$, for some $\Sigma\subseteq \CT$, and $\rho:\Gal(\bar{F}/\F)\to \hat{G}(\mathbb{F)}$ a continuous representation.

Assume $\kappa$ is good. Let $\lambda$ be a symmetric weight.
\begin{enumerate}
\item Suppose either  $\lambda-\delta({\tau})$, for some $\tau\in\Sigma_F$, or $\lambda$ is good; set $\lambda'=\lambda-\delta({\tau})$ or $\lambda'=\lambda$, respectively.
Assume $\Theta_\Sigma^\lambda (f)$  is nonzero.

Then, the Frobenius eigenvalues of $\rho$ agree with the Hecke eigenvalues of the form $f$ (as defined in Conjecture \ref{gal-rep-conj}) if and only if the Frobenius eigenvalues of 
$(\hat{\nu}^{|\lambda|/2}\circ\chi)\otimes \rho$ agree with the Hecke eigenvalues of the form $\Theta_\Sigma^\lambda (f)$. 

In particular, if $\rho$ is modular of weight $\kappa$, then $(\hat{\nu}^{|\lambda|/2}\circ\chi)\otimes \rho$ is modular of weight  $\kappa + \lambda + (|\lambda|/2)\kappa_{\ha,\Sigma}+||\lambda'||\kappa_\ha$.

\item Suppose $\lambda$ is simple; fix $\Upsilon$ as in Equation (\ref{upsilon_def}).
Assume $\Upsilon\subseteq\Sigma$ and $\tth_{\Upsilon,\Sigma}^\lambda (f)$  is nonzero.

Then, the Frobenius eigenvalues of $\rho$ agree with the Hecke eigenvalues of the form $f$ (as defined in Conjecture \ref{gal-rep-conj}) if and only if the Frobenius eigenvalues of 
$(\hat{\nu}^{|\lambda|/2}\circ\chi)\otimes \rho$ agree with the Hecke eigenvalues of the form $\tth_{\Upsilon,\Sigma}^\lambda (f)$. 

In particular, if $\rho$ is modular of weight $\kappa$, then $(\hat{\nu}^{|\lambda|/2}\circ\chi)\otimes \rho$ is modular of weight $\kappa + \tlambda^\Upsilon + (|\lambda|/2)\kappa_{\ha,\Sigma}$.
\end{enumerate}
\end{cor}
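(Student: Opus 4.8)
The plan is to follow the template of the proof of \cite[Theorem B]{EFGMM}, reducing each statement to the transformation law of the prime-to-$p$ Hecke eigensystem of $f$ under the relevant differential operator, and then transporting that law through the ($\bmod p$) Satake isomorphism and Conjecture \ref{gal-rep-conj}. Throughout, the goodness of $\kappa$ together with the goodness of $\lambda$ (resp. $\lambda-\delta(\tau)$) in Part (1), and the goodness of $\kappa$ together with the simplicity of $\lambda$ and the hypothesis $\Upsilon\subseteq\Sigma$ in Part (2), are exactly what guarantee that the operators $\Theta_\Sigma^\lambda$ (resp. $\tth_{\Upsilon,\Sigma}^\lambda$) are defined on forms of weight $\kappa$ and that the commutation relations of Proposition \ref{ellhecke_thm} apply.

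First I would record that, by Corollary \ref{Heig}, the standing hypothesis that $\Theta_\Sigma^\lambda(f)$ (resp. $\tth_{\Upsilon,\Sigma}^\lambda(f)$) is nonzero ensures it is again a $\bmod p$ Hecke eigenform, so it has a well-defined eigensystem at every good place $v$. The next step is to compute this eigensystem in terms of that of $f$. For each connected component $(Z,\varphi)$ of the moduli space of $\ell$-isogenies, Proposition \ref{ellhecke_thm}(2) (resp. (3)) gives $T_{(Z,\varphi)}\circ\Theta^\lambda=\nu(\varphi)^{|\lambda|/2}\,\Theta^\lambda\circ T_{(Z,\varphi)}$ (resp. the analogue for $\tth^\lambda$). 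Hence, if $T_{(Z,\varphi)}f=a_{(Z,\varphi)}f$, then $T_{(Z,\varphi)}$ acts on the image by the scalar $\nu(\varphi)^{|\lambda|/2}a_{(Z,\varphi)}$; passing through the algebra map $h_\ell$ that identifies prime-to-$p$ Hecke operators with algebraic correspondences, the eigensystem of the image is that of $f$ scaled by $\nu(\varphi)^{|\lambda|/2}$ on each relevant double coset.

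It then remains to translate this scaling into the language of the dual group. Under the Satake transform $\Satake_v$, multiplication of the eigensystem by $\nu(\varphi)^{|\lambda|/2}$ corresponds to modifying the $v$-Satake parameter $s_{f,v}\in\hat{G}(\Fbar_p)$ by the central element $\hat{\nu}(q_v)^{|\lambda|/2}$, where $\hat{\nu}$ is the cocharacter dual to $\nu$ (recall $\widehat{\nu^m}=(\hat{\nu})^m$) and $q_v$ reduces $\bmod p$ to $\chi(\Frob_v)$ for the $\bmod p$ cyclotomic character $\chi$. Thus the $v$-Satake parameter of the image is $\hat{\nu}(\chi(\Frob_v))^{|\lambda|/2}\cdot s_{f,v}$, which is precisely the value of $\Frob_v$ under $(\hat{\nu}^{|\lambda|/2}\circ\chi)\otimes\rho$. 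By Conjecture \ref{gal-rep-conj}, the Frobenius eigenvalues of $\rho$ match the Hecke eigenvalues of $f$ if and only if, after this twist, the Frobenius eigenvalues of $(\hat{\nu}^{|\lambda|/2}\circ\chi)\otimes\rho$ match the Hecke eigenvalues of the image, which gives the stated equivalence. The final modularity assertions then follow by combining this equivalence with the computation of the weight of the image in Theorem \ref{ANAlambda_thm} (for Part (1), giving $\kappa+\lambda+(|\lambda|/2)\kappa_{\ha,\Sigma}+||\lambda'||\kappa_\ha$) and in Theorem \ref{newops-thm} (for Part (2), giving $\kappa+\tlambda^\Upsilon+(|\lambda|/2)\kappa_{\ha,\Sigma}$).

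The main obstacle is this last translation step: pinning down exactly how the scalar $\nu(\varphi)^{|\lambda|/2}$ appearing in the commutation relation corresponds to the cyclotomic twist $\hat{\nu}^{|\lambda|/2}\circ\chi$ on the Galois side. This requires careful bookkeeping with the normalization of the $\bmod p$ Satake isomorphism, the compatibility of the similitude factor $\nu$ with its dual cocharacter $\hat{\nu}$, and the identification of $q_v \bmod p$ with $\chi(\Frob_v)$; once these are in place, every other step is a formal consequence of the commutation relations in Proposition \ref{ellhecke_thm} and of Corollary \ref{Heig}.
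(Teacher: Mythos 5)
Your proposal is correct and follows essentially the same route as the paper, whose proof of Corollary \ref{Galcor} is precisely the argument of \cite[Theorem B]{EFGMM} adapted to this setting: Corollary \ref{Heig} to ensure the image is an eigenform, the commutation relations of Proposition \ref{ellhecke_thm}(2)--(3) to twist the prime-to-$p$ eigensystem by $\nu(\varphi)^{|\lambda|/2}$, the Satake/central-cocharacter compatibility identifying this twist with $\hat{\nu}^{|\lambda|/2}\circ\chi$ on the Galois side, and Theorems \ref{ANAlambda_thm} and \ref{newops-thm} for the weights in the modularity statements.
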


As in \cite[Section 5.2]{EFGMM}, 
the above result is a first step in the use of $\Theta$-operators to investigate Serre's weight conjecture (as, e.g.,  in the specific case of $\GSp_4(\IQ)$ in \cite[Theorems 1.1 and 1.2]{Yama}) on minimal weights of modularity for \modp Galois representations, or more generally how the weights of modularity vary under twists by the cyclotomic character.  Some preliminary results on $\Theta$-cycles analogous to \cite[Theorem on p. 55]{Katz-modular} and \cite[Section 5.2]{EFGMM} also hold in this context, when restricting to scalar weights.  As in \cite{Yama}, the general case, beyond scalar weights, is much more subtle.

As first observed for a special case in \cite[Section 4.1]{DSG} (and also in \cite[Section 5]{DSG2}),
the cycles described by the modular weights under the action of the operators $\tth$ are substantially different from those obtained under the action of the operators $\Theta$, which is likely to provide an advantage in the study of Serre's weight conjecture.

\bibliography{muordinaryAnalyticContinuationbib}
\printindex
\end{document}